\numberwithin{equation}{section}
\newtheorem{theorem}{Theorem}[section]
\newtheorem{lemma}[theorem]{Lemma}
\newtheorem{remark}[theorem]{Remark}
\newcommand*{\dis}{\displaystyle}
\newcommand*{\nn}{\nonumber}
\newcommand*{\R}{\mathbb{R}}
\newcommand*{\Rn}{\mathbb{R}^n}
\newcommand*{\Hn}{\mathbb{H}^n}
\newcommand*{\Bn}{\mathbb{B}^n}
\newcommand*{\divg}{\operatorname{div}}
\newcommand{\W}{\Omega}
\newcommand*{\dist}{\operatorname{dist}}
\newcommand*{\al}{\alpha_\lambda}
\newcommand*{\ve}{\varepsilon}
\newcommand*{\C}{\mathcal{C}}
\begin{document}
	
	\title{Symmetry for a quasilinear elliptic equation in Hyperbolic space}
	
	\begin{abstract}
		In this article we establish the sharp decay estimates and Hyperbolic symmetry of solutions to the quasilinear elliptic equation \begin{align*} &-\Delta_p^{\mathbb{H}^n} u - \lambda |u|^{p-2}u = |u|^{q - 2}u \text{ in } \mathbb{H}^n \\ & \quad u \ge 0, \; u \in \mathcal{D}^{1,p}(\mathbb{H}^n)\end{align*}
		in the $n$-dimensional real Hyperbolic space $\mathbb{H}^n$ where, $\lambda \in [0, \lambda_{\text{max}})$ with $\lambda_{\text{max}} = \left(\frac{n-1}{p}\right)^p$ the best constant of the Poincar\'e inequality in $\mathbb{H}^n$ and $p < q \le p^\ast$, $p^\ast = \frac{np}{n-p}$.
	\end{abstract}
	
	\subjclass[2020]{35B06,35B33,35B44,35B45,35J92}
	\keywords{Quasilinear Elliptic Equation,  Symmetry, Hyperbolic space}
	\author{Ramya Dutta$^\dagger$ and Kunnath Sandeep$^{\dagger\dagger}$}\thanks{$^{\dagger}$ Universit\'{e} Claude Bernard Lyon 1, 43 boulevard du 11 Novembre 1918, 69622 Villeurbanne cedex, France. E-mail : dutta@math.univ-lyon1.fr \\$^{\dagger\dagger}$ TIFR  Centre for Applicable Mathematics, Post Bag No. 6503, Sharadanagar, Yelahanka New Town, Bangalore 560065. Email: sandeep@tifrbng.res.in}
	
	\maketitle
	
\section{Introduction}
Understanding the sharp constants and extremals of Sobolev Inequalities is an important problem due to its applicability in the study of many partial differential equations. One of the important example in this direction is the well known Yamabe problem from differential geometry where the knowledge of extremals of the Euclidean Sobolev inequality or  more generally the classification of positive solutions of the corresponding Euler-Lagrange equation is used in a very crucial way.   In the case of Euclidean space extremals of Sobolev inequalities have been classified by now and one of the main development in the process has been establishing the radial symmetry of solutions. \\\\ In this article we study the Hyperbolic symmetry of  solutions of the following quasilinear PDE in the $n$ dimensional real Hyperbolic space $(\Hn,g)$ of constant sectional curvature $-1$:
\begin{align} \begin{cases} \label{R21eqn1}
			-\Delta_p^{\Hn} u - \lambda |u|^{p-2}u = |u|^{q - 2}u \text{ in } \Hn \\ u\ge 0, \; u \in \mathcal{D}^{1,p}(\Hn).
	\end{cases} \end{align} 
The equation \eqref{R21eqn1} is the Euler-Lagrange equation satisfied by the extremals of the Poincar\'{e}-Sobolev inequality in $(\Hn,g)$ given by 
\begin{align}\label{R2PSineq} \int_{\Hn} |\nabla_g u|_g^p - \lambda |u|^p \,dV_g \ge S_{\lambda,q} \left(\int_{\Hn} |u|^q \,dV_g\right)^{p/q}, \forall \, u \in C_c^1(\Hn) \end{align} where, $\lambda \in \left[0, \lambda_{\text{max}} \right)$ and $\dis \lambda_{\text{max}} := \inf_{u \in C_c^1(\Hn)\setminus 0} \frac{\int_{\Hn} |\nabla_g u|_g^p\,dV_g}{\int_{\Hn} |u|_g^p\,dV_g} = \left(\frac{n-1}{p}\right)^p$ denotes the best constant in the Poincar\'{e} inequality in Hyperbolic space,  $\Delta_p^{\Hn}u := \divg_g (|\nabla_g u|^{p-2}\nabla_g u)$ is the corresponding $p$-Laplace operator, $1 < p < n$ , $p < q \le p^\ast$ , $p^\ast = \frac{np}{n-p}$ denotes the critical Sobolev exponent and $\mathcal{D}^{1,p}(\Hn)$ is the completion of $C_c^1(\Hn)$ with the norm $\left(\int_{\Hn} |\nabla_g u|_g^p\,dV_g\right)^{1/p}$. Note that when $n=2$ we are only considering $p < 2$ and $q \in (2,p^\ast]$ where, $p^\ast = \frac{2p}{2-p}$. A solution of \eqref{R21eqn1} is a $u \in \mathcal{D}^{1,p}(\Hn)$ satisfying
$$  \int_{\Hn} \left[|\nabla_g u|_g^{p-2}g(\nabla_gu,\nabla_g\phi) - \lambda |u|^{p-2}u\phi \right] \,dV_g  = \int_{\Hn} |u|^{q-2}u\phi \,dV_g, \; \forall \, \phi \in \mathcal{D}^{1,p}(\Hn).$$
By writing this equation in either the Poincar\'{e} ball model or in the upper half-space model, it can be easily seen from the standard regularity theory (see \cite{Tol}, \cite{DiBe}) that the solutions of \eqref{R21eqn1} are indeed $C^{1,\alpha}$ for some $\alpha \in (0,1)$.
\\\\ Our main result in this article is the sharp asymptotics and Hyperbolic symmetry of solutions of \eqref{R21eqn1}.
    \begin{theorem}\label{R2hypsymmetry}
    	Let, $u$ be a solution of \eqref{R21eqn1}, then there is a point $O \in \Hn$ and a positive strictly monotone decreasing function $\Phi: [0,\infty) \to \mathbb{R}_+$ such that $u(x) = \Phi(\dist_{\Hn}(O,x))$. The function $\Phi(t)$ is $C^{1}$ at $t = 0$ with $\Phi'(0) = 0$ and smooth for all $t > 0$. It satisfies the pointwise bounds $ce^{-\al t} \le \Phi(t) \le Ce^{-\al t}$ for all $t \ge 0$, $ce^{-\al t} \le -\Phi'(t) \le Ce^{-\al t}$ for all $t \ge 1$, where, $c,C > 0$ are some positive constants (which depend on $\Phi$) and $\lim\limits_{t \to \infty} \frac{\Phi'(t)}{\Phi(t)} = - \al$, where, $\al$ is the unique positive root of $|\alpha|^{p-2}\alpha(n - 1 - (p-1)\alpha) = \lambda$ such that $\al \in \left(\frac{n-1}{p}, \frac{n-1}{p-1}\right]$.
    \end{theorem}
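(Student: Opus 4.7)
My plan has three stages: preliminary decay and regularity for $u$, hyperbolic symmetry via a moving-plane argument, and sharp exponential asymptotics extracted from the resulting radial ODE.

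\textbf{Stage 1: boundedness and decay at infinity.} First I would show $u \in L^\infty(\Hn)$ and $u(x) \to 0$ as $\dist_{\Hn}(O,x) \to \infty$. A Moser iteration applied to \eqref{R21eqn1} in local geodesic charts, starting from $u \in \mathcal{D}^{1,p}(\Hn) \cap L^{p^\ast}(\Hn)$, gives global boundedness. Combined with the $\mathcal{D}^{1,p}$ energy bound and a standard covering argument (each far-away unit ball carries a vanishing amount of energy), this forces $u \to 0$ at infinity. V\'azquez's strong maximum principle for the $p$-Laplacian then yields $u > 0$ everywhere, and the $C^{1,\alpha}$ regularity is the one already recorded after \eqref{R21eqn1}.

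\textbf{Stage 2: hyperbolic symmetry.} Passing to the Poincar\'e ball model, hyperbolic reflections across totally geodesic hyperspheres are isometries preserving both $\Delta_p^{\Hn}$ and the $\mathcal{D}^{1,p}$ norm, so the Alexandrov--Serrin moving-plane method adapts to this setting. I would run the sliding procedure of Almeida--Damascelli--Ge--type (for the Laplacian in $\Hn$) combined with the weak and strong comparison principles of Damascelli--Pacella--Sciunzi (to control the degenerate set $\{\nabla u = 0\}$). The decay from Stage 1 allows the moving plane to be started from infinity; sliding inward until the critical position and applying the strong comparison principle gives symmetry of $u$ across a totally geodesic hyperplane in every direction. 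Since these hyperplanes of symmetry in all directions must intersect in a single point, one concludes $u(x) = \Phi(\dist_{\Hn}(O,x))$ for some $O \in \Hn$, and the strict monotone decrease of $\Phi$ along geodesic rays from $O$ is a direct byproduct of the moving-plane conclusion.

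\textbf{Stage 3: sharp asymptotics.} Radial symmetry reduces the PDE to
\begin{equation*}
-\bigl(\sinh^{n-1} r\, |\Phi'|^{p-2}\Phi'\bigr)' = \sinh^{n-1} r\, \bigl(\lambda \Phi^{p-1} + \Phi^{q-1}\bigr).
\end{equation*}
Regularity at $r = 0$ and $\Phi'(0) = 0$ follow from the usual analysis of radial $C^1$ functions, and smoothness for $r > 0$ from classical ODE theory together with $\Phi' < 0$. Substituting the ansatz $\Phi \sim e^{-\alpha r}$ and using $\sinh r \sim \frac{1}{2}e^r$, the dominant balance between the $p$-Laplace term and $\lambda \Phi^{p-1}$ (since $\Phi^{q-1}$ is asymptotically negligible) produces the characteristic equation $|\alpha|^{p-2}\alpha(n - 1 - (p-1)\alpha) = \lambda$; strict concavity of the left-hand side on $(0, \tfrac{n-1}{p-1}]$, with maximum $\lambda_{\max}$ attained at $\tfrac{n-1}{p}$, isolates the unique root $\al$ in $\bigl(\tfrac{n-1}{p},\tfrac{n-1}{p-1}\bigr]$. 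To upgrade the formal balance to a two-sided pointwise bound, I would construct explicit super- and subsolutions of the form $C e^{-\al r}$ and $c e^{-\al r}$ (with mild polynomial corrections to absorb $\Phi^{q-1}$ and the discrepancy between $\sinh^{n-1} r$ and $2^{-(n-1)}e^{(n-1)r}$) on exterior regions $\{r > R\}$, and invoke the weak comparison principle for the radial $p$-Laplacian. The matching bound $ce^{-\al t} \le -\Phi'(t) \le Ce^{-\al t}$ for $t \ge 1$ follows by integrating the radial identity from $r$ to $\infty$ and using the $\Phi$-bounds. Finally, setting $v = -\Phi'/\Phi$, dividing the radial equation by $\Phi^{p-1}$ and passing to the autonomous limit ODE gives $v(r) \to \al$.

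\textbf{Main obstacle.} The principal difficulty is the moving-plane step: running Alexandrov--Serrin in curved hyperbolic geometry for the degenerate quasilinear $p$-Laplacian requires simultaneous control of the critical set $\{\nabla u = 0\}$ and of how hyperbolic reflections interact with the comparison framework. A secondary technical hurdle is the construction of sharp barriers in Stage 3: because $\al$ sits precisely where $(p-1)\al \le n-1$, the polynomial factor $\sinh^{n-1} r / e^{(n-1)r}$ is exactly critical and must be absorbed by polynomial corrections that still yield a bona fide super- or subsolution of the degenerate equation.
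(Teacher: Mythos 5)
There is a genuine gap, and it is one of logical order. You propose to run the moving plane in Stage~2 using only the soft decay $u \to 0$ from Stage~1, and to extract the sharp rate $e^{-\al t}$ afterwards from the radial ODE in Stage~3. For $p \neq 2$ this order does not work. The weak comparison step at infinity for the degenerate operator is carried out via a Picone-type inequality, whose left-hand side carries the weight $\left(|\nabla_g \log u|_g + |\nabla_g \log u_\mu|_g\right)^{p-2}$; to make this weight uniformly elliptic near infinity (and hence to absorb the right-hand side via a weighted Hardy inequality with the weight $e^{-p\al\dist}$) one needs, \emph{before} symmetry is known, both the sharp two-sided decay $u(x) \approx e^{-\al \dist(O,x)}$ and the two-sided gradient estimate $A^{-1}u \le |\nabla_g u|_g \le A u$ outside a compact set. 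This is exactly the situation in the Euclidean problem, where the symmetry proofs of Damascelli--Mercuri--Sciunzi required the a priori decay of V\'etois and the gradient lower bound of Sciunzi as inputs. Your own ``main obstacle'' paragraph acknowledges the difficulty of the moving plane for the degenerate operator but does not identify that the cure is precisely the asymptotics you defer to Stage~3 — the argument as ordered is circular.

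The hardest ingredient, which your proposal omits entirely, is the lower bound $|\nabla_g u|_g \ge A^{-1} u$ at infinity. The paper obtains it by a blow-up along hyperbolic isometries: rescaled translates $v_k = (u\circ\tau_k)/u(x_k)$ converge to a positive solution $v_\infty$ of the limiting eigenvalue problem $-\Delta_p^{\Hn} v_\infty = \lambda v_\infty^{p-1}$ squeezed between constant multiples of $E_\xi(x) = \left(\frac{1-|x|^2}{|x-\xi|^2}\right)^{\al}$, and one must then classify such solutions ($v_\infty \equiv c E_\xi$, Theorem~\ref{R2eigenclfn}). That classification is itself a nontrivial moving-plane-plus-ODE argument in the half-space model and is the paper's central new contribution; no barrier construction of the type you sketch in Stage~3 substitutes for it. Separately, your Stage~3 barriers $Ce^{-\al r}$ ``with mild polynomial corrections'' are not quite what works either: the paper uses $\left[\cosh(t/m)\right]^{-m\al}$ and $\left[\sinh(t/2)\right]^{-2\al}$, chosen so that the error term in $-\Delta_p v - \lambda v^{p-1}$ has a definite sign and dominates $u^{q-p}v^{p-1}$, and the comparison is again run through the Picone identity rather than the classical weak comparison principle (which is problematic for sign reasons when the nonlinearity is increasing). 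These sharp decay bounds are proved for the non-symmetric solution directly, in Section~\ref{R2decay}, precisely so that Section~\ref{R2symmetry} can use them.
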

    \,
\\ In the case of Euclidean Sobolev inequality, the corresponding Euler-Lagrange equation 
\begin{align}\label{R2Eucl-Sob}
    -\Delta_p u  = |u|^{p^\ast - 2}u \text{ in } \Rn, u \ge 0, u \in \mathcal{D}^{1,p}(\Rn)
\end{align}
has been studied in detail. When $p=2$, the symmetry of solutions has been established in the seminal work of Gidas-Ni-Nirenberg \cite{GNN} using the moving plane method. However when $p\not= 2$ there are many obstructions for proving the symmetry of solutions for the $p$-Laplace equations using the moving plane method mainly due to the lack of strong comparison principle, lack of tools to get the moving plane argument started from infinity (in the case of problems in unbounded domain) etc.  Note that the $p$-Laplace operator is elliptic on points where $\nabla u \not=0$ and in the complement it is either degenerate or singular depending on $p > 2$ or $p < 2$ respectively. The first significant progress in applying the moving plane method for the $p$-Laplace case was made by Damascelli in \cite{Dam} where 
 he proved the radial symmetry for positive solutions of equations of the form $-\Delta_pu = f(u)$ with Dirichlet boundary condition in bounded domains in the Euclidean space. The results in the entire space $\mathbb{R}^n$ was established in \cite{DPM} for solutions of the same PDE with proper decay/integrability, where $f$ was non increasing in some interval $(0, s_0)$. See also \cite{SH} for related developments. \\\\
 In the case of Sobolev extremals, the nonlinear term is strictly increasing and hence the results of \cite{DPM} can not be applied to obtain radial symmetry of Sobolev extremals. It was shown in 
 \cite{DM} that when $1 < p < 2$ the symmetry of \eqref{R2Eucl-Sob} follows if the solutions have a decay namely $u(x) \le C |x|^{-m}$, $|\nabla u| \le C|x|^{-(m+1)}$ for $|x|$ large (additionally $u(x) \ge c|x|^{-m}$ for $|x|$ large when $p^\ast < 2$) where, $m > \frac{n-p}{p}$. The symmetry result was proved in \cite{DMMS} without any additional decay assumption when $\frac{2n}{n+2} \le p < 2$ (i.e., $p < 2$ and $p^\ast \ge 2$). \\\\
Tools for handling the issue of strong comparison in the general case $p > 2$ was developed by Damascelli and Sciunzi in \cite{DS1}, \cite{DS2}. Again these informations are not enough to obtain symmetry results for \eqref{R2Eucl-Sob} in the full range of $1 < p < n$. The first break through in this direction was obtained by V\'{e}tois \cite{Vetois} where he proved that the solutions of \eqref{R2Eucl-Sob} satisfy the sharp decay estimate $u(x) \approx |x|^{-\frac{n-p}{p-1}}$ and $|\nabla u| \le C|x|^{-\frac{n-1}{p-1}}$ for $|x|$ large. With this and the results of \cite{DM} the symmetry results follows when $1 < p < 2$. The symmetry result for the case $p>2$ was completed by Sciunzi \cite{Sciunzi} by establishing a precise pointwise gradient bound 
 $|\nabla u(x)| \approx |x|^{-\frac{n-1}{p-1}}$ for $|x|$ large. More recently the classification result for \eqref{R2Eucl-Sob} has also been established by a technique independent of the moving plane method in conical domains as well as the full Euclidean space $\Rn$ in \cite{CFR}. We also refer to \cite{catino2022}, \cite{ou2022} and \cite{vetois2024} for the classification result of \eqref{R2Eucl-Sob} where the finite energy assumption $u \in \mathcal{D}^{1,p}(\Rn)$ has been relaxed in certain range of $p$ and $n$. \\\\
 Coming to the equation \eqref{R21eqn1}, when $p=2$ the symmetry of solution has been studied in \cite{ADG} for $\lambda \le \frac14 n(n-2)$ and  subsequently extended to the full range of $\lambda$ in \cite{MS} using the method of moving plane. Consequently in \cite{MS} the solutions have been studied thoroughly by analysing the corresponding ordinary differential equation. In this situation the moving plane method can be applied as we are in the elliptic regime and we have appropriate Sobolev inequalities. Thus the Euclidean method can be adopted  to this geometric situation by reflecting over foliations of the Hyperbolic space. \\\\
Our main focus in this article is the case $p \not= 2 $. Like in the Euclidean case here also the issues are lack of strong comparison principles and also moving the  plane from infinity.  The issue of strong comparison principle in bounded subsets of $\Hn$ can be established by suitably adapting the ideas of \cite{DS1}, \cite{DS2} to the Hyperbolic space, the main challenge in this case is to establish the precise asymptotic estimates at infinity. \\\\
The most important contribution of this article is to establish sharp asymptotic estimates to the positive solutions of \eqref{R21eqn1} and its gradient, so that the moving plane method can be made to work in this case by suitably adapting the Euclidean tools. In the Euclidean case the precise $L^\infty$-estimates were obtained by crucially rescaling the equations. However in our case we do not have any substitute for scaling. In Theorem-\ref{R2sharpdecay} we will establish the $L^\infty$-estimates by finding appropriate sub and super solutions combined with weak comparison tools using the Picone identity. The estimates we obtained in Theorem-\ref{R2sharpdecay} are new even in the case of $p=2$, which in fact improves the estimates obtained in \cite{MS} except in cases where explicit solutions are known. To prove the estimates on the gradient, the upper bound follows from the $L^\infty$-estimate and a use of Harnack inequality, however the lower bound on the gradient at infinity is the difficult estimate to prove. In the Euclidean case \eqref{R2Eucl-Sob} this is achieved in \cite{Sciunzi} by a blow up analysis via rescaling which leads to the limiting equation $-\Delta_p u = 0$ in $\mathbb{R}^n \setminus \{0\}$ and $u(x) \approx |x|^{-\frac{n-p}{p-1}}$ whose solutions have been classified in \cite{kv} to be constant multiples of the fundamental solution $u(x) = c|x|^{-\frac{n-p}{p-1}}$ (see also \cite{Sciunzi} for an alternative proof under weaker assumptions). Also see \cite{OSV} for a similar limiting equation with a Hardy potential with singularity at origin in $\Rn$. The main difference of our work when comparing with the Euclidean references mentioned above is in the nature of the blow up and the limiting problem. In our case the limiting problem is in fact an eigenvalue problem with a pole at `infinity'.  We need to establish a classification result for positive eigenfunctions of the following eigenvalue problem
\begin{equation}\label{R2eigenfunction}
-\Delta_p^{\Hn} u =  \lambda u^{p-1}
\end{equation}
which are comparable to certain eigenfunctions with desired behavior at infinity. Here by an eigenfunction we means a $C^1$ function which satisfy the relation 
$$\int_{\Hn} |\nabla_g u|_g^{p-2}g(\nabla_gu,\nabla_g\phi)\,dV_g = \int_{\Hn} \lambda u^{p-1} \phi \,dV_g$$ for all $\phi \in C_c^1(\Hn)$. \\\\
Fix a point $O\in \Hn$ and $(r,\theta) \in (0,\infty)\times \mathbb{S}^{n-1}$ be the geodesic normal coordinates centred at $O$. For $\xi \in \mathbb{S}^{n-1}$ define a function $E_\xi$ by
\begin{align}\label{R2eigen0}
	E_\xi(x) = \left( \cosh r -(\xi \cdot \theta)  \sinh r \right)^{-\alpha_\lambda}
\end{align} 
where, $(r,\theta)$ is the geodesic normal coordinate of $x$ and $\al$ is as in Theorem-\ref{R2hypsymmetry}. Note that $E_\xi$ extends as a $C^1$ function to $\Hn$. With this definition we can state the classification result as follows.
\begin{theorem}\label{R2eigenclfn} 
	For any $\xi \in \mathbb{S}^{n-1}$, $E_\xi$ be as in \eqref{R2eigen0}. If  $E$ satisfies \eqref{R2eigenfunction} and has the bounds $E_\xi (x)\le E(x) \le CE_\xi (x)$ for some $C > 1$ and all $x \in \Hn$ then $E \equiv cE_\xi$ for some $c \in [1,C]$.
\end{theorem}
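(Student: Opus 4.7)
The plan is to invoke the Picone identity for the $p$-Laplacian to force $F := E/E_\xi$ to be constant. Both $E$ and $E_\xi$ are positive eigenfunctions with the same eigenvalue $\lambda$; by Tolksdorf--DiBenedetto regularity $E \in C^{1,\gamma}_{\mathrm{loc}}(\Hn)$, and a standard rescaling/Harnack argument using the pinch $E_\xi \le E \le CE_\xi$ yields a global gradient bound $|\nabla_g E|_g \le K E$ on $\Hn$; for $E_\xi$ the explicit identity $|\nabla_g E_\xi|_g = \al E_\xi$ holds pointwise.

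Next I would apply the Allegretto--Huang Picone inequality with $\phi = E \eta$ and $v = E_\xi$, for a smooth compactly supported cutoff $\eta \ge 0$. Testing the weak form of the equation satisfied by $E_\xi$ with $(E\eta)^p / E_\xi^{p-1}$ (valid since $E_\xi > 0$) and the equation satisfied by $E$ with $E\eta^p$, subtraction yields the integrated Picone identity
\begin{equation*}
\int_{\Hn} R(E\eta, E_\xi) \, dV_g \;=\; \int_{\Hn} |\nabla_g (E\eta)|_g^p \, dV_g - \lambda \int_{\Hn} (E\eta)^p \, dV_g,
\end{equation*}
with $R \ge 0$ and equality iff $\nabla_g(E\eta/E_\xi) = 0$. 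A quantitative form of $R$ gives $R \ge c_p E_\xi^p |\nabla_g(E\eta/E_\xi)|_g^p$ for $p \ge 2$ (with a weighted $L^2$ analogue for $1 < p < 2$), while expanding $|\nabla_g(E\eta)|_g^p$ and applying Young's inequality together with the pinch $E \le CE_\xi$ and the gradient bound $|\nabla_g E|_g \le KE$ yields
\begin{equation*}
\int_{\Hn} R(E\eta, E_\xi) \, dV_g \;\le\; C \int_{\Hn} E_\xi^p |\nabla_g \eta|_g^p \, dV_g.
\end{equation*}
Combining, and using the reverse-triangle expansion $|\eta \nabla_g F + F \nabla_g \eta|^p \ge 2^{1-p} \eta^p |\nabla_g F|^p - F^p |\nabla_g \eta|^p$, one obtains
\begin{equation*}
\int_{\Hn} E_\xi^p \, \eta^p \, |\nabla_g F|_g^p \, dV_g \;\le\; C' \int_{\Hn} E_\xi^p |\nabla_g \eta|_g^p \, dV_g.
\end{equation*}

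The crux is to construct a sequence of cutoffs $\eta_R \in C^1_c(\Hn)$ with $\eta_R \to 1$ pointwise and right-hand side vanishing as $R \to \infty$. In the upper half-space model with $\xi$ placed at $+\infty$, $E_\xi(t,y) = t^{\al}$ and $dV_g = t^{-n}\,dt\,dy$, so the weighted integral becomes $\int t^{(\al + 1)p - n}|\nabla_{\mathrm{Euc}} \eta_R|^p \, dt\,dy$. A naive tensor-product cutoff $\chi(\log t/R)\psi(|y|/R)$ produces an error that grows exponentially in $R$ because of the non-integrability of $E_\xi^p \, dV_g$ near the ideal point $\xi$. The correct choice must balance the $t$- and $y$-supports against the weight: I would take $\eta_R(t, y) = \chi(\log t / R) \psi(|y|/T(R))$ with the scaling $T(R)$ chosen so that the two error contributions compete, using the specific exponents coming from $\al \in (\tfrac{n-1}{p},\tfrac{n-1}{p-1}]$ to ensure one contribution decays faster than the other grows. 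Once $\int E_\xi^p |\nabla_g \eta_R|_g^p \, dV_g \to 0$, the previous estimate gives $\int_{\Hn} E_\xi^p |\nabla_g F|_g^p \, dV_g = 0$, hence $\nabla_g F \equiv 0$ and $F \equiv c$ for some $c \in [1, C]$, proving $E = cE_\xi$.

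The main obstacle will be the cutoff construction in Step 3: the non-integrability of the weight $E_\xi^p\, dV_g$ in a neighbourhood of $\xi$ means that no cutoff of standard tensor-product form with $\eta_R \to 1$ can annihilate the error, and one must design $\eta_R$ adapted to the horospherical conformal structure of $\Hn$ so that the tangential and normal contributions cancel to leading order. This is the technical heart of the proof; all of the earlier steps are by now fairly standard once a suitable $p$-Laplace Picone identity is set up on $\Hn$ and regularity bounds are in place.
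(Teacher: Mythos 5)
Your plan hinges on Step 3: a cutoff sequence $\eta_R \to 1$ pointwise on all of $\Hn$ with $\int_{\Hn} E_\xi^p\,|\nabla_g \eta_R|_g^p\,dV_g \to 0$. No such sequence exists, and this is not a technicality that a cleverer horospherical cutoff can fix. In the upper half-space model with $\xi$ at $\infty$ the weight is $E_\xi^p\,dV_g = x_n^{p\al - n}\,dx$ with $p\al - n > -1$, so with your own ansatz $\eta_R = \chi(\log t/R)\,\psi(|y|/T(R))$ the $t$-derivative contribution from the slab $\{e^{R}\le t\le e^{2R}\}$ is of order $T(R)^{n-1}R^{-p}e^{2R(p\al-(n-1))}$; since $\al>\frac{n-1}{p}$ this exponent is positive, so killing it forces $T(R)\to 0$ exponentially fast, contradicting $\eta_R\to 1$. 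More structurally, the existence of such a sequence is exactly $p$-parabolicity of the weighted manifold $(\Hn, E_\xi^p\,dV_g)$, equivalently vanishing of the weighted $p$-capacity of a ball; but that capacity is bounded below via the Picone/ground-state substitution by $\inf\{\int|\nabla_g u|_g^p-\lambda|u|^p : u=E_\xi \text{ on } B_1\}$, which is strictly positive by the Poincar\'e--Sobolev inequality since $\lambda<\lambda_{\max}$. So the error term you need to annihilate is genuinely bounded away from zero: the infinite energy of $E_\xi$ near the pole $\xi$ is an obstruction, not an inconvenience.

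The paper circumvents precisely this by never running Picone globally. It compares $u$ with its reflection $u_\tau$ across orthogonal spheres $S_R(x_0)$, $x_0\in\partial\Rn_+$, whose closures at infinity avoid $\xi$; the two-sided pinching $x_n^{\al}\le u\le Cx_n^{\al}$ forces $u<u_\tau$ on a smaller half-ball $H_{R'}(x_0)$, so the Picone test functions are supported in the collar $H_R\setminus H_{R'}$, where the weighted energy is finite exactly because $\al>\frac{n-1}{p}$. This yields only one-dimensional (horospherical) symmetry, $u=w(x_n)$, and a substantial second stage remains: classifying solutions of the resulting ODE with $t^{\al}\le w\le Ct^{\al}$, which the paper does via the linearized equation for $tw'$, a strong minimum principle, and a scaling/blow-up argument at $0$ and $\infty$. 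Your proposal contains no analogue of either the localization away from $\xi$ or this second stage, so as written it does not yield the theorem.
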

\, \\ We prove Theorem-\ref{R2eigenclfn} in Section-\ref{R2decay}, Theorem-\ref{R2classeigen} formulated in the equivalent Poincar\'{e} ball model $\Bn$ of the Hyperbolic space.
\\ The problem of classification of positive eigenfunctions in general may have independent interest from a potential theoretic perspective, particularly in the study of Martin boundary. We recall some relevant definitions. Let, $\mathcal{L}$ be a homogeneous elliptic operator, meaning if $u$ solves $\mathcal{L}u = 0$ then so does $cu$ for any constant $c$. Suppose, $u \in C^1(\Hn)$ be a positive solution to the equation $\mathcal{L}u = 0$ such that $u$ has vanishing `trace' on $\partial_\infty \Hn \setminus \{\xi\}$ i.e., it vanishes in some generalized trace sense on the the ideal boundary (or sphere at infinity) except at $\xi$ (see the definition of ideal boundary or the sphere at infinity in Section-\ref{R2prelim}). We call such a solution $u$ to be a minimal function relative to the point $\xi \in \partial_\infty \Hn$ (identified with $\mathbb{S}^{n-1}$) if and only if any other solution $\tilde{u}$ of $\mathcal{L}\tilde{u} = 0$ satisfying $0 \le \tilde{u} \le Cu$ for some positive constant $C$, must be a positive constant multiple of $u$, i.e., $\tilde{u} \equiv cu$ for some $c > 0$. We say that the $\mathcal{L}$-Martin boundary can be identified with $\partial_\infty \Hn$ if each point $\xi \in \partial_\infty \Hn$ has a unique (up to constant multiple) positive minimal function relative to it.
\\\\ In the linear case when $p = 2$, the conclusion of Theorem-\ref{R2eigenclfn} can be obtained by only assuming the upper bound (see Remark-\ref{R2rmkminimal}). The minimality of the eigenfunctions $E_\xi$ relative to $\xi \in \mathbb{S}^{n-1}$ can be seen as a direct consequence of a representation theorem based on its boundary behavior due to Helgason \cite{Helgason} and Minemura \cite{Minemura}. The representation of eigenfunctions on non-compact symmetric space through a hyperfunction on its minimal boundary, known as the Helgason conjecture, was established in full generality in \cite{kami}. It would be interesting to know if the eigenfunctions $E_\xi$ are minimal relative to $\xi \in \mathbb{S}^{n-1}$ for a general $1 < p < n$, i.e., if the assumption of the lower bound in Theorem-\ref{R2eigenclfn} can be relaxed.
\\\\ The structure of the article is as follows. In Section-\ref{R2prelim} we go over some of the basic notations and definitions relating to the Hyperbolic space. In Section-\ref{R2decay} we prove the boundedness, sharp decay estimates for the solutions of \eqref{R21eqn1} and its gradient, as well as the classification of eigenfunctions Theorem-\ref{R2eigenclfn} (Theorem-\ref{R2classeigen}). Section-\ref{R2symmetry} is dedicated to the proof of Hyperbolic symmetry of \eqref{R21eqn1} and in Section-\ref{R2strongcopmarison} we discuss the Sobolev regularity of $u$ and the strong comparison theorem in Hyperbolic space. Finally in Section-\ref{R2exist} we discuss some existence and non-existence results related to \eqref{R21eqn1}.

\section{preliminaries}\label{R2prelim} In this section we will recall some of the basic notations and results which we will be using in this article.\\
Notations : We will denote the $n$ dimensional real Hyperbolic space by $(\Hn,g)$. Fix a point $O\in \Hn$ then in the normal coordinates based at $O$ we will identify the unit sphere in the tangent space of $\Hn$ at $O$ by $\mathbb{S}^{n-1}$.  Also note that for $\Hn$ we have the notion of the geometric boundary called the `sphere at infinity' or the `ideal boundary' denoted $\partial_\infty \Hn$ which is abstractly defined as the equivalence classes of geodesic rays under the equivalence relation where two geodesic rays $\gamma_1, \gamma_2$ are equivalent if $\dist_g(\gamma_1(t), \gamma_2(t))$ remains bounded for $t\ge 0.$ Once we fix the point $O\in \Hn$, we can identify this sphere at infinity (or the ideal boundary) with $\mathbb{S}^{n-1}$. We refer the book \cite{SY-Book}, Chapter-2 for details.\\
In geodesic normal coordinates the Hyperbolic metric takes the form $ g = dt^2 + (\sinh t)^2 g_{\mathbb{S}^{n-1}}$ where, $t(x) = \dist_g(O,x)$ and  $g_{\mathbb{S}^{n-1}}$ is the standard metric on $\mathbb{S}^{n-1}$. Consequently the Hyperbolic measure will be given by $dV_g = (\sinh t)^{n-1}dV_{g_{\mathbb{S}^{n-1}}}dt$. \\\\
When we need to make precise calculations we will be working with either the Poincar\'{e} ball model or the upper half-space model. In both the models we will be using $g$ for the metric which will vary depending on the model we are using. We will denote the gradient, divergence and modulus with respect to the metric by $\nabla_g, \divg_g$ and $|\cdot |_g$ respectively. While working in the Poincar\'{e} ball or upper half-space models the corresponding  quantities with respect to the Euclidean metric will be denoted by $\nabla, \divg$ and $|\cdot |$ respectively. Since the gradient and divergence are intrinsic quantities (see \cite{canzani2013}, Chapter-4), we see that our equation \eqref{R21eqn1} is invariant under isometries of the Hyperbolic space, i.e., if $u$ solves \eqref{R21eqn1} and $T$ is an isometry of $\Hn$ then $u\circ T$ also solves \eqref{R21eqn1}. \\\\
 We will denote the Poincar\'{e} ball model by $(\Bn,g)$ where, $\Bn$ is the Euclidean unit ball in $\Rn$ endowed with the metric $\left(\frac{2}{1-|x|^2} \right)^2g_E$ and $g_E$ denotes the Euclidean metric. By an abuse of notation, we will denote the upper half-space model by $(\Hn,g)$  where, $\Hn = \{x \in \Rn : x_n >0\}$ and the metric $g$ is given by $g = x_n^{-2} g_E$.  Note that the isometry group in these two models are respectively the  Mobius groups of the ball and upper half-space which we will denote by $\mathcal{M}(\Bn)$ and $\mathcal{M}(\Hn)$. For more details about the two models and their isometry groups we refer to \cite{stoll} and \cite{Ratcli}. Throughout the article unless otherwise stated $B_R(x_0)$ will denote the metric ball centered at $x_0$ with radius $R$, meaning $B_R(x_0) := \left\{x \in \Hn: \dist_{g}(x,x_0) < R\right\}$.\\
We will end this section by recalling the expressions for the gradient, divergence and the p-Laplacian in these coordinates. In the Poincar\'{e} ball model $(\Bn,g)$ they are given by 
\begin{align} \nabla_g u(x) &= \left(\frac{1 - |x|^2}{2}\right)^2\nabla u(x) \end{align}
\begin{align} |\nabla_g u|_g(x) &= \left(\frac{1 - |x|^2}{2}\right)|\nabla u(x)| \end{align}
\begin{align} \divg_g X &= \left(\frac{1 - |x|^2}{2}\right)^n\divg \left(\left(\frac{2}{1 - |x|^2}\right)^n X\right). \end{align}
The p-Laplacian is given by 
\begin{align}
	\Delta_p^{\Bn} u(x) = \divg_g(|\nabla_g u|_g^{p-2}\nabla_g u) = \left( \frac{1-|x|^2}{2}\right)^n \divg \left(\left( \frac{2}{1-|x|^2}\right)^{n-p}|\nabla u|^{p-2}\nabla u\right).
\end{align}
In the upper half-space model $(\Hn, g)$ they are given by
\begin{align}
	\nabla_g u(x) &= x_n^2\nabla u(x) \\ |\nabla_g u|_g(x) &= x_n|\nabla u(x)| \\ \divg_g X &= x_n^n\divg \left(x_n^{-n} X\right).
\end{align}
The p-Laplacian is given by
\begin{align}
	\Delta_p^{\Hn} u(x) = \divg_g(|\nabla_g u|_g^{p-2}\nabla_g u)= x_n^n \divg \left(x_n^{p-n}|\nabla u|^{p-2}\nabla u \right).
\end{align}
 
	\section{Decay estimate for solutions and its gradient}\label{R2decay}
	We begin by showing that the solutions to equation \eqref{R21eqn1} have higher integrability and are bounded by proceeding along the arguments in \cite{Vetois}, \cite{Vetois2}. 
	\\ Let us consider the auxiliary function 
	\begin{align}\label{R2auxf}
		f(\alpha) := |\alpha|^{p-2}\alpha(n-1 - (p-1)\alpha)
	\end{align} 
	then, $\displaystyle f(0) = f\left(\frac{n-1}{p-1}\right) = 0$ and $f'(\alpha) = (p-1)|\alpha|^{p-2}(n-1 - p\alpha)$. Thus $f$ has a unique maximum at $\displaystyle \alpha_c = \frac{n-1}{p}$ and is concave and increasing in $[0,\alpha_c)$ and decreasing in $\left(\alpha_c,\dfrac{n-1}{p-1}\right]$. Thus the equation $f(\alpha) = \lambda$ has two roots $\alpha_\lambda, \beta_\lambda$ satisfying \begin{align}\label{R2root} 0 \le \beta_\lambda < \frac{n-1}{p} < \alpha_\lambda \le \frac{n-1}{p-1} \end{align} for any $\displaystyle \lambda \in \left[0, \left(\frac{n-1}{p}\right)^p\right)$.
	\subsection{Boundedness of solution}\,
	\\ In what follows we will use the notation $\W_m = \left\{x \in \Bn : |u| > m \right\}$, its characteristic function $\chi_m := \chi_{\W_m}$ and the characteristic function of complement will be denoted by $\chi_m^c := \chi_{\Bn \setminus \W_m}$.
	
	\begin{lemma}\label{R2hint} 
		Let $u$ be a solution to \eqref{R21eqn1} when $q = p^\ast$. Then $u \in L^k(\Bn)$ for $p \le k < p^\ast \left(\frac{n-1}{p\beta_\lambda}\right)$ where, $\beta_{\lambda}$ is as in \eqref{R2root}. Hence, $u$ has higher integrability than $p^\ast$.
	\end{lemma}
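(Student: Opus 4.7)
The plan is a Moser-type iteration adapted to \eqref{R21eqn1} in the ball model $\Bn$, bootstrapping from the base integrability $u \in L^{p^\ast}(\Bn)$ furnished by the Sobolev embedding $\mathcal{D}^{1,p}(\Bn) \hookrightarrow L^{p^\ast}$.

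For $s \ge 1$ and a truncation level $m > 0$, set $u_m := \min(u,m)$ and $v_m := u u_m^{s-1}$. First test \eqref{R21eqn1} against the admissible function $\phi_m := u u_m^{p(s-1)} \in \mathcal{D}^{1,p}(\Bn) \cap L^\infty$. Using $\nabla_g u_m = \chi_{\{u \le m\}} \nabla_g u$ one obtains
\begin{align*}
&[1+p(s-1)] \int_{\{u \le m\}} u^{p(s-1)} |\nabla_g u|_g^p\, dV_g + m^{p(s-1)} \int_{\{u > m\}} |\nabla_g u|_g^p\, dV_g \\
&\qquad = \lambda \int_{\Bn} v_m^p\, dV_g + \int_{\Bn} u^{p^\ast-p} v_m^p\, dV_g.
\end{align*}
A parallel chain-rule computation gives $\int |\nabla_g v_m|_g^p\,dV_g = s^p \int_{\{u \le m\}} u^{p(s-1)} |\nabla_g u|_g^p\,dV_g + m^{p(s-1)} \int_{\{u > m\}} |\nabla_g u|_g^p\,dV_g$, so the left-hand side of the identity dominates $\tfrac{1+p(s-1)}{s^p} \int |\nabla_g v_m|_g^p\,dV_g$.

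Combining this with the Poincar\'e inequality $\int |\nabla_g v_m|_g^p\,dV_g \ge \lambda_{\text{max}} \int v_m^p\,dV_g$ and rearranging yields
$$\Bigl[\tfrac{1+p(s-1)}{s^p} \lambda_{\text{max}} - \lambda\Bigr] \int_{\Bn} v_m^p\, dV_g \le \int_{\Bn} u^{p^\ast-p} v_m^p\, dV_g.$$
A direct algebraic check shows the bracket is positive exactly when $s < s_\ast := (n-1)/(p\beta_\lambda)$: substituting $s = s_\ast$ reduces the equality $(1+p(s_\ast-1))/s_\ast^p = \lambda/\lambda_{\text{max}}$ to $\beta_\lambda^{p-1}(n-1-(p-1)\beta_\lambda) = \lambda$, which is precisely $f(\beta_\lambda) = \lambda$ for the auxiliary function \eqref{R2auxf}. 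For $1 \le s < s_\ast$, combine the identity with the full Poincar\'e--Sobolev inequality \eqref{R2PSineq} applied to $v_m$ and split the supercritical term at level $A$,
$$\int u^{p^\ast-p} v_m^p\, dV_g \le A^{p^\ast-p} \int v_m^p\, dV_g + \Bigl(\int_{\{u > A\}} u^{p^\ast}\, dV_g\Bigr)^{(p^\ast-p)/p^\ast} \Bigl(\int v_m^{p^\ast}\, dV_g\Bigr)^{p/p^\ast}.$$
Since $u \in L^{p^\ast}$, the tail factor can be made arbitrarily small by choosing $A = A(s)$ large, after which the $\|v_m\|_{p^\ast}^p$-piece is absorbed into the left-hand side. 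Passing $m \to \infty$ by monotone convergence gives the bootstrap $\|u\|_{sp^\ast}^{sp} \le C(s) \|u\|_{sp}^{sp}$ whenever $u \in L^{sp}$.

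Starting from $u \in L^{p^\ast}$ and iterating this bootstrap in the admissible range $s < s_\ast$ yields $u \in L^k$ for all $p^\ast \le k < s_\ast p^\ast = p^\ast(n-1)/(p\beta_\lambda)$; interpolation with $u \in L^p$ (which follows from the Poincar\'e inequality applied to $u$ itself) covers $k \in [p, p^\ast]$. The principal technical point is the identification of the threshold $s_\ast$ via the characteristic relation $f(\beta_\lambda) = \lambda$: this explains why the \emph{smaller} root $\beta_\lambda$, rather than $\al$, governs the upper bound on integrability, since $(n-1)/(p\al) < 1$ lies outside the admissible Moser range while $(n-1)/(p\beta_\lambda) > 1$ lies inside it.
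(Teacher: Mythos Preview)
Your argument is correct and follows essentially the same approach as the paper: the same truncated test function $u\,u_m^{p(s-1)}$, the same use of the Poincar\'e inequality to absorb the $\lambda$-term into the gradient energy, the same H\"older/Sobolev splitting of the supercritical term with absorption of the small tail piece, and the same identification of the threshold $s_\ast=(n-1)/(p\beta_\lambda)$ via the characteristic relation $f(\beta_\lambda)=\lambda$. The only cosmetic difference is in how the remainder is organized: the paper bounds the leftover piece by $m_0^{p(s-1)}\|u\|_{p^\ast}^{p^\ast}$ and thus obtains every $L^{sp^\ast}$ with $s<s_\ast$ in a single step, whereas your leftover $A^{p^\ast-p}\|u\|_{sp}^{sp}$ requires you to already know $u\in L^{sp}$, forcing the (finitely many) iterations you describe---but both routes reach the same conclusion.
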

	
	\begin{proof} Note that since $u \in \mathcal{D}^{1,p}(\Bn)$ by the Poincar\'{e}-Sobolev inequality in $\Bn$ we know that $u \in L^k(\Bn)$ for $p \le k \le p^\ast$. Let us denote by $u_m := \min(|u|,m)$ for $m > 0$. Testing equation \eqref{R21eqn1} with $\varphi = u_m^{p\beta - p}u \in \mathcal{D}^{1,p}(\Bn)$ where $\beta > 1$, we have 
		\begin{align}\label{R2beqn1} \int_{\Bn} |\nabla_g u|_g^{p-2}g(\nabla_g u, \nabla_g \varphi) \,dV_g - \lambda \int_{\Bn} |u|^{p-2}u\varphi \,dV_g = \int_{\Bn} |u|^{q-2}u\varphi \,dV_g. \end{align} Then on the LHS of \eqref{R2beqn1} we have 
		\begin{align}\label{R2beqn2}
			\int_{\Bn} |\nabla_g u|_g^{p-2}g(\nabla_g u, \nabla_g \varphi) \,dV_g &= \int_{\Bn} |\nabla_g u|_g^{p} \left( \chi_m^c (p\beta - p + 1)|u|^{p\beta - p} + \chi_m m^{p\beta - p} \right) \,dV_g \nn \\ & \ge \frac{(p\beta - p + 1)}{\beta^p}\int_{\Bn} |\nabla_g u|_g^{p} \left( \chi_m^c \beta^p |u|^{p\beta - p} + \chi_m m^{p\beta - p} \right) \,dV_g \nn \\& = \frac{(p\beta - p + 1)}{\beta^p} \int_{\Bn} \left|\nabla_g \left(u_m^{\beta - 1}u\right)\right|_g^{p} \,dV_g
		\end{align} where, we used the Bernoulli's inequality $\beta^p = (1 + (\beta - 1))^p \ge 1 + p(\beta - 1)$. Again using Poincar\'{e} inequality in $\Bn$ we have 
		\begin{align}\label{R2beqn3} \left(\frac{p\beta - p + 1}{\beta^p} - \frac{\lambda}{\lambda_{\text{max}}}\right)\int_{\Bn} \left|\nabla_g \left(u_m^{\beta - 1}u\right)\right|_g^{p} \,dV_g \le \int_{\Bn} |u|^qu_m^{p\beta - p} \,dV_g. \end{align}
		Therefore, on the RHS of \eqref{R2beqn3} for any $m > m_0$ we have 
		\begin{align} \label{R2beqn4} \int_{\Bn} |u|^{q}u_{m}^{p\beta - p} \,dV_g \le m_0^{p\beta - p}\int_{\W_{m_0}^c} |u|^{q} \,dV_g + \int_{\W_{m_0}} |u|^q u_m^{p\beta - p} \,dV_g. \end{align}
		Using Holder's inequality followed by Sobolev Inequality on the second term on RHS of \eqref{R2beqn4} we get 
		\begin{align}\label{R2beqn5} \int_{\W_{m_0}} |u|^q u_m^{p\beta - p} \,dV_g &\le \left(\int_{\W_{m_0}} |u|^{q} \,dV_g\right)^{1 - \frac{p}{q}}\left(\int_{\W_{m_0}} \left(u_m^{\beta - 1}|u|\right)^{q} \,dV_g\right)^{\frac{p}{q}} \nn \\&\le S\left(\int_{\W_{m_0}} |u|^{q} \,dV_g\right)^{1 - \frac{p}{q}} \int_{\Bn} \left|\nabla_g \left(u_m^{\beta - 1} u\right) \right|_g^p \,dV_g \nn \\& \le \frac{1}{2}\left(\frac{p\beta - p + 1}{\beta^p} - \frac{\lambda}{\lambda_{\text{max}}}\right) \int_{\Bn} \left|\nabla_g \left(u_m^{\beta - 1} u\right) \right|_g^p \,dV_g \end{align} by choosing $m_0$ sufficiently large depending on $\beta$. Therefore, combining \eqref{R2beqn3}, \eqref{R2beqn4} and \eqref{R2beqn5} we get
		\begin{align} \label{R2beqn6} \left(\frac{p\beta - p + 1}{\beta^p} - \frac{\lambda}{\lambda_{\text{max}}}\right) \int_{\Bn} \left|\nabla_g \left(u_m^{\beta - 1} u\right) \right|_g^p \,dV_g \le 2m_0^{p\beta - p} \int_{\W_{m_0}^c} |u|^{q} \,dV_g \end{align} followed by an application of Sobolev inequality and letting $m \to \infty$ we get 
		\begin{align}\label{R2beqn7} \left(\frac{p\beta - p + 1}{\beta^p} - \frac{\lambda}{\lambda_{\text{max}}}\right)\left(\int_{\Bn} |u|^{\beta q} \,dV_g\right)^{\frac{p}{q}} \le Cm_0^{p\beta - p} \int_{\W_{m_0}^c} |u|^{q} \,dV_g. \end{align} Now, note that making the change of variable $\dis \beta = \frac{n-1}{p\alpha}$ we have $$\frac{p\beta - p + 1}{\beta^p} = \frac{\alpha^{p-1}(n - 1 - (p-1)\alpha)}{\lambda_{\text{max}}} > \frac{\lambda}{\lambda_{\text{max}}}$$ provided $\alpha \in (\beta_\lambda, \alpha_\lambda)$ as in \eqref{R2root} i.e., if $\beta \in \left(\frac{n-1}{p\al}, \frac{n-1}{p\beta_\lambda}\right)$. This concludes the proof of higher integrability. \end{proof}
	
	\begin{theorem}\label{R2lemmavanish} Let $u$ be a solution of \eqref{R21eqn1} then $u \in L^\infty(\Bn)$. Furthermore, $u(x) \to 0$ and $|\nabla_g u|_g(x) \to 0$ as $\dist_{\Bn}(0,x) \to \infty$. \end{theorem}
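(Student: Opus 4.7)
\textbf{Proof plan for Theorem \ref{R2lemmavanish}.} The plan proceeds in three stages. First I would establish $u\in L^\infty(\Bn)$ via a Moser-type iteration. Second, combining this $L^\infty$ bound with the finite $L^q$-energy of $u$, I would upgrade the integrability fall-off at infinity to uniform pointwise vanishing through a local sup-estimate on geodesic balls of fixed radius. Third, the gradient vanishing would follow from the interior $C^{1,\alpha}$ regularity of the $p$-Laplace operator.

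For the $L^\infty$ bound, in the subcritical range $p<q<p^{\ast}$ the nonlinearity $|u|^{q-1}$ lies in $L^{p^{\ast}/(q-1)}(\Bn)$, and since $p^{\ast}/(q-1)$ exceeds the Serrin threshold one can run a standard Moser iteration based on the Hyperbolic Sobolev inequality $\|v\|_{L^{p^\ast}}^{p}\le S\int_{\Bn}|\nabla_g v|_g^{p}\,dV_g$ to close the loop. In the critical case $q=p^{\ast}$ I would rewrite the equation as $-\Delta_p^{\Bn}u=(\lambda+|u|^{p^{\ast}-p})\,|u|^{p-2}u$ and invoke Lemma \ref{R2hint} to get $|u|^{p^{\ast}-p}\in L^{n/p+\ve}(\Bn)$ for some $\ve>0$. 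The usual truncation-and-iterate scheme, with the bad set $\{|u|>M\}$ chosen so that $|u|^{p^{\ast}-p}$ carries arbitrarily small $L^{n/p}$ mass on it for $M$ large, then produces $u\in L^\infty(\Bn)$; this is the Moser scheme adopted in \cite{Vetois,Vetois2}.

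For the pointwise decay of $u$: since $u\in L^q(\Bn)$ with $q$ finite, $\int_{\Bn\setminus B_R(0)}|u|^q\,dV_g\to 0$ as $R\to\infty$. Using that \eqref{R21eqn1} is invariant under the Mobius isometries of $\Bn$, I would pull any point $x$ back to the origin via an isometry and apply a local Moser iteration on a fixed geodesic ball $B_{1}(0)$ to the transported solution. This yields a uniform bound of the form $\|u\|_{L^\infty(B_{1/2}(x))}\le F\!\left(\|u\|_{L^q(B_{1}(x))}\right)$ with $F$ continuous and $F(0)=0$, hence $u(x)\to 0$ as $\dist_g(0,x)\to\infty$.

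Finally, the gradient vanishing follows from the interior $C^{1,\alpha}$ estimates of Tolksdorf and DiBenedetto applied after the same isometric pull-back: $\|\nabla_g u\|_{L^\infty(B_{1/4}(x))}$ is bounded by a continuous function of $\|u\|_{L^\infty(B_{1/2}(x))}$ that vanishes with its argument, giving $|\nabla_g u|_g(x)\to 0$. The main obstacle is the critical case of the boundedness step: the Poincar\'e term makes the Moser factor $\tfrac{p\beta-p+1}{\beta^p}-\tfrac{\lambda}{\lambda_{\max}}$ turn negative for large $\beta$, so one cannot iterate $\beta\to\infty$ directly as in the Euclidean setting; the iteration must instead be set up after first cashing in the higher integrability of Lemma \ref{R2hint} to buy strict Serrin slack for the effective potential $\lambda+|u|^{p^\ast-p}$.
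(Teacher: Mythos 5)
Your plan matches the paper's proof: in the critical case the equation is rewritten with the potential $\lambda + |u|^{p^\ast - p}$, Lemma \ref{R2hint} supplies its $L^{k}_{\text{loc}}$ integrability for some $k > n/p$ with norm controlled by the global $L^{p^\ast\beta_0}$ norm of $u$, a local Moser estimate $\sup_{B_1(x)}|u| \le C\lVert u\rVert_{L^{p^\ast}(B_2(x))}$ is then transported to every $x$ by the isometry invariance (which yields the $L^\infty$ bound and the decay simultaneously), and the gradient statement follows from the $C^{1,\sigma}$ estimates exactly as you say. One small correction to the subcritical step: you should use the potential form there as well, since the source $|u|^{q-1} \in L^{p^\ast/(q-1)}$ beats the Serrin threshold $n/p$ only when $q < p^\ast - p + 1$, whereas the potential $|u|^{q-p} \in L^{p^\ast/(q-p)}$ satisfies $p^\ast/(q-p) > n/p$ for every $q < p^\ast$.
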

	
	\begin{proof} The proof in the sub-critical case $q < p^\ast$ follows in a similar way to the critical case. Note that in the critical case $q = p^\ast$ we may rewrite equation \eqref{R21eqn1} as \begin{align}\label{R2beqn8} -\Delta_p^{\Bn} u = (\lambda + |u|^{p^\ast-p})|u|^{p-1}u \end{align} where, $\lambda + |u|^{p^\ast-p} \in L^k_{\text{loc}}(\Bn)$ for some $\dis k > \frac{n}{p}$ by the higher integrability proved in Lemma-\ref{R2hint}.
		\\ Therefore, by usual Moser's iteration (see \cite{serrin65}, \cite{T}) we have the estimate \begin{align} \label{R2beqn9} \sup_{B_{1}(0)} |u| \le C\lVert u \rVert_{L^{p^\ast}(B_2(0))} \end{align} where, the constant $C$ only depends on $\lVert\lambda + |u|^{p^\ast-p} \rVert_{L^{k}(B_2)} \le C_1\left(\lambda + \lVert u \rVert_{L^{p^\ast\beta_0}(\Bn)}^{p^\ast-p}\right)$ for $\dis 1 < \beta_0 < \frac{n-1}{p\beta_\lambda}$. Since, the equation \eqref{R2beqn8} is invariant under isometries of the Hyperbolic space $\mathcal{M}(\Bn)$ then for any $x \in \Bn$ consider $\tau_x \in \mathcal{M}(\Bn)$ such that $\tau_x(0) = x$. Then using \eqref{R2beqn9} on $v := (u \circ \tau_x)$ we have $\dis \sup_{B_{1}(0)} |v| \le C\lVert v \rVert_{L^{p^\ast}(B_2(0))}$ or equivalently $\dis \sup_{B_{1}(x)} |u| \le C\lVert u \rVert_{L^{p^\ast}(B_2(x))}$ for any $x \in \Bn$. Combining this with standard $C^{1,\sigma}$ estimates (see \cite{DiBe}, \cite{Tol}) we have $\dis \sup_{B_{1/2}(x)} |\nabla_g u|_g \le C'\sup_{B_{1}(x)} |u| \le C''\lVert u \rVert_{L^{p^\ast}(B_2(x))}$ for all $x \in \Bn$ and the later goes to $0$ as $\dist_{\Bn}(0,x) \to \infty$ since $u \in L^{p^\ast}(\Bn)$. \end{proof}
	
	\subsection{Super-solution and sub-solutions to the equation}\, \\
	In the following lemma we identify the candidates for sub(super)-solutions to our equation which will be used to establish the decay estimates.
	
	\begin{lemma}\label{R2lemmasubsol} The function $\displaystyle v(x) := \left[\cosh \left(\frac{\dist_{\Bn}(0,x)}{m}\right)\right]^{-m\al}$ for $m \ge 2$ is a super-solution to the equation \begin{align}\label{R2supersol} -\Delta_p^{\Bn} v - \lambda v^{p-1} \ge c_\lambda e^{-\frac{2}{m}\dist_{\Bn}(0,x)}v^{p-1} \end{align} in $\displaystyle \dist_{\Bn}(0,x) > R_\lambda$ for some $R_\lambda > 0$ and some positive constant $c_\lambda > 0$.
	\\ Also, $\displaystyle v(x) := \left[\sinh \left(\frac{\dist_{\Bn}(0,x)}{2}\right)\right]^{-2\al}$ is a sub-solution to the equation \begin{align}\label{R2subsol} -\Delta_p^{\Bn} v - \lambda v^{p-1} \le 0 \end{align} for $\displaystyle \dist_{\Bn}(0,x) > R_\lambda$ for some $R_\lambda > 0$. \end{lemma}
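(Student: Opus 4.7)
Both candidate functions are radial in $r = \dist_{\Bn}(0,x)$, so the whole calculation reduces to a one-variable ODE manipulation. In geodesic polar coordinates the metric is $g = dr^2 + \sinh^2(r) g_{\mathbb{S}^{n-1}}$ (recorded in Section~\ref{R2prelim}), so for a radial profile $\phi(r)$ with $\phi'<0$ the hyperbolic $p$-Laplacian reads
$$-\Delta_p^{\Bn}\phi(r) = (-\phi'(r))^{p-1}\left[(n-1)\coth r + (p-1)\frac{-\phi''(r)}{-\phi'(r)}\right].$$
The crucial algebraic input throughout is the defining identity $\al^{p-1}\bigl[(n-1)-(p-1)\al\bigr] = \lambda$, which is what will allow the zeroth order terms to cancel on both sides in each case.

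For the super-solution, take $\phi_m(r) = \cosh(r/m)^{-m\al}$. Direct differentiation gives $-\phi_m'/\phi_m = \al\tanh(r/m)$ and an explicit expression for $-\phi_m''/(-\phi_m')$ in terms of $\tanh(r/m)$. Substituting, factoring out $\al^{p-1}\phi_m^{p-1}\tanh^{p-1}(r/m)$, and invoking the defining identity to cancel the constant term, I would expand at infinity using $\tanh(r/m) = 1 - 2e^{-2r/m} + O(e^{-4r/m})$, $\coth r = 1 + O(e^{-2r})$, and $1 - \tanh^2(r/m) = 4e^{-2r/m} + O(e^{-4r/m})$. The leading residual comes out to $2(p-1)\al^{p-1}\bigl[p\al - (n-1) + 2/m\bigr]\,e^{-2r/m}\phi_m^{p-1}$ plus lower order; since $\al > (n-1)/p$, this bracket is strictly positive, which yields \eqref{R2supersol} for $r > R_\lambda$ with some $c_\lambda > 0$ absorbing the error term.

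For the sub-solution, take $\psi(r) = \sinh(r/2)^{-2\al}$, so that $-\psi'/\psi = \al\coth(r/2)$. A short calculation using the identity $\mathrm{csch}^2(r/2)/[2\coth(r/2)] = 1/\sinh r$ gives $-\psi''/(-\psi') = -\al\coth(r/2) - 1/\sinh r$. The key algebraic simplification is the addition formula $\coth(r/2) = \coth r + 1/\sinh r$ (proved by writing $\coth(r/2)-\coth r = \sinh(r/2)/[\sinh(r/2)\sinh r]$), which collapses the bracket in the $p$-Laplacian formula to $[(n-1)-(p-1)\al]\coth r - (p-1)(\al+1)/\sinh r$. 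Subtracting $\lambda\psi^{p-1} = \al^{p-1}[(n-1)-(p-1)\al]\psi^{p-1}$ and expanding at infinity (with $\coth r - 1$ and $1/\sinh r$ both of order $O(e^{-r})$), the leading remainder becomes $2(p-1)\al^{p-1}\bigl[(n-2) - p\al\bigr]e^{-r}\psi^{p-1}$, whose coefficient is strictly negative since $p\al \ge n-1 > n-2$. Thus \eqref{R2subsol} holds for $r > R_\lambda$.

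The main obstacle is bookkeeping rather than idea: one must track the leading-order cancellation carefully so that (i) the defining relation for $\al$ kills the zeroth order term on both sides, and (ii) the strict inequality $\al > (n-1)/p$ (rather than just $\al \ge (n-1)/p$) controls the sign of the exponentially small remainder in both cases. The restriction $m \ge 2$ in the super-solution is exactly what keeps the profile decay scale $e^{-2r/m}$ slower than the geometric error $\coth r - 1 = O(e^{-2r})$, so that the positive profile contribution dictates the sign of the final remainder.
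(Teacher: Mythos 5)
Your proposal is correct and follows essentially the same route as the paper: reduce to the radial ODE form of $\Delta_p^{\Bn}$, cancel the zeroth-order term via the defining relation $\al^{p-1}(n-1-(p-1)\al)=\lambda$, and expand asymptotically in $e^{-2r/m}$; your leading coefficient $2(p-1)\al^{p-1}\left[p\al-(n-1)+\tfrac{2}{m}\right]$ agrees exactly with the paper's $2c_\lambda$. The paper omits the details of the sub-solution case (``by a similar calculation''), and your explicit computation of it --- the half-angle identity collapsing the bracket to $[(n-1)-(p-1)\al]\coth r - (p-1)(\al+1)/\sinh r$ with leading remainder $2(p-1)\al^{p-1}[(n-2)-p\al]e^{-r}<0$ --- checks out.
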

	\begin{proof} Recall that in $\Bn$ we have $\dis t_x := \dist_{\Bn}(0,x) = \log\left(\frac{1+|x|}{1 - |x|}\right)$ or equivalently $r_x := |x| = \tanh(t_x/2)$. Therefore we may express the Riemannian metric $g$ in geodesic polar coordinates at the origin as 
	\begin{align}\label{R2polar}  
		g = \frac{4dx^2}{(1 - |x|^2)^2} = \left(\frac{2}{1 - r^2}\right)^2(dr^2 + r^2g_{\mathbb{S}^{n-1}}) = dt^2 + (\sinh t)^2 g_{\mathbb{S}^{n-1}}. 
	\end{align} 
	Then the $\Delta_p^{\Bn}$ of a radial function $v(x)$, which we continue to denote as $v(t)$ with $t = \dist_{\Bn}(0,x)$, is given by 
	\begin{align}\label{R2polarplaplace} 
		\Delta_p^{\Bn} v(x) &= \frac{1}{(\sinh t)^{n-1}} \left((\sinh t)^{n-1} |v'(t)|^{p-2}v'(t)\right)' \nn \\&= |v'(t)|^{p-2}\left[(p-1)v''(t) + (n-1)\coth (t) v'(t)\right]. 
	\end{align} 
	\\ We consider the function $\displaystyle v(t) := \left[\cosh \left(\frac{t}{m}\right)\right]^{-m\al}$ where, $m \ge 2$. Then by a direct computation of the derivatives we have $$v'(t) = -\al \left[\cosh \left(\frac{t}{m}\right)\right]^{-m\al}\tanh \left(\frac{t}{m}\right) = -\al \tanh \left(\frac{t}{m}\right) v(t)$$ and $$v''(t) = -\frac{\al}{m} \left[\cosh\left(\frac{t}{m}\right)\right]^{-2}v(t) + \al^2 \tanh^2\left(\frac{t}{m}\right) v(t).$$ Substituting in the expression \eqref{R2polarplaplace} we get 
	\begin{align}\label{R22eq3} 
		-\Delta_p^{\Bn} v &= \al^{p-2}\tanh^{p} \left(\frac{t}{m}\right)\left[ (p-1)\frac{\al}{m}\left[\sinh\left(\frac{t}{m}\right)\right]^{-2} - (p-1)\al^2 \right. \nn \\ & \quad \left. + (n-1)\al \coth t \coth\left(\frac{t}{m}\right) \right]v^{p-1}(t). 
	\end{align} 
	Now we have the asymptotic expansions of the following functions and writing $e^{-t/m} = s$ for brevity 
	\begin{align} \left[\sinh\left(\frac{t}{m}\right)\right]^{-2} &= 4e^{-2t/m}\left(1 + 2 e^{-2t/m} + O(e^{-4t/m})\right) = 4s^2 + O(s^4) \nn
		    \\ \coth \left(\frac{t}{m}\right) &= 1 + 2e^{-2t/m} + O(e^{-4t/m}) = 1 + 2s^2 + O(s^4) \nn
			\\  \coth t &= 1 + 2e^{-2t} + O(e^{-4t}) = 1 + 2s^{2m} + O(s^{4m}) \nn
			\\  \tanh^{p} \left(\frac{t}{m}\right) &= \left(1 - 2e^{-2t/m} + O(e^{-4t/m})\right)^p = 1 - 2pe^{-2t/m} + O(e^{-4t/m}) = 1 - 2ps^2 + O(s^4). 
	\end{align}
		\\ Substituting the asymptotic expressions in \eqref{R22eq3} we get 
		\begin{align} \label{R22eq4} 
			-\Delta_p^{\Bn} v &= \al^{p-2} (1 - 2ps^2 + O(s^4))\left[(n-1)\al (1+2s^2+O(s^4))(1 + 2s^{2m} + O(s^{4m})) \right. \nn \\ & \quad \left. - (p-1) \al^2 + 4(p-1)\frac{\al}{m}(s^2 + O(s^4))\right]v^{p-1}(t) \\ &= (1 - 2ps^2 + O(s^4)) \left[\underbrace{\al^{p-1}(n-1 - (p-1)\al)}_{ =: f(\al) = \lambda} + 2\al^{p-1}(n-1)s^2 \right. \nn \\ & \qquad\qquad\qquad\qquad\qquad \left. + \frac{4\al^{p-1}}{m}(p-1)s^2 + O(s^4)\right]v^{p-1}(t) \\&= \left[\lambda + 2\left(\al^{p-1}(n-1) - p\lambda\right) s^2 + \frac{4\al^{p-1}}{m}(p-1)s^2 + O(s^4) \right]v^{p-1}(t) \\ &= \lambda v^{p-1}(t) + 2c_{\lambda} e^{-2t/m}\left(1 + O(e^{-2t/m})\right)v^{p-1}(t) 
		\end{align} 
	where, $f(\alpha)$ is as in \eqref{R2auxf}, the constant $\displaystyle c_\lambda := \left(\al^{p-1}(n-1) - p\lambda\right) + \frac{2\al^{p-1}}{m}(p-1)$ is positive since, $\displaystyle \al > \frac{n-1}{p}$ and $\displaystyle \lambda < \left(\frac{n-1}{p}\right)^p$. Therefore the function $\displaystyle v(x) := \left[\cosh \left(\frac{\dist_{\Bn}(0,x)}{m}\right)\right]^{-m\al}$ for $m \ge 2$ is a super-solution to the equation \eqref{R2supersol} whenever $t = \dist_{\Bn}(0,x) > R_\lambda$ is chosen large enough.
	\\ By a similar calculation we also have $\displaystyle v(x) := \left[\sinh \left(\frac{\dist_{\Bn}(0,x)}{2}\right)\right]^{-2\al}$ is a sub-solution to \eqref{R2subsol} in $\dist_{\Bn}(0,x) > R_\lambda$. \end{proof} 
	\subsection{Proof of sharp decay} \,
	\\ We first prove a weaker decay estimate for solutions to \eqref{R21eqn1} which we will subsequently reuse to prove the sharp decay estimate. Before proceeding with the proof of decay estimates we recall the following Picone-inequality: \begin{lemma}{(Picone Inequality)} \label{R2piconelemma} Let $p \in (1,n)$ and $u,v \in W^{1,p}_{\text{loc}}(\Bn)$ be two positive functions then 
		\begin{align}\label{R2picone} 
			\mathscr{B}(u,v) := & |\nabla_g u|_g^{p-2} g\left( \nabla_g u , \nabla_g\left(u - \frac{v^p}{u^p} u\right) \right) + |\nabla_g v|_g^{p-2} g\left( \nabla_g v , \nabla_g\left(v - \frac{u^p}{v^p} v\right) \right) \nn \\ &\ge C_p \min\left\{u^p, v^p\right\}\left(|\nabla_g \log u|_g + |\nabla_g \log v|_g\right)^{p-2}|\nabla_g (\log u - \log v)|_g^2. 
	\end{align} \end{lemma}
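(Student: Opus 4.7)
The plan is to compute $\mathscr{B}(u,v)$ explicitly, reduce it to a nonnegative combination of two quantities that each express the convexity of $|\cdot|_g^p$, and then conclude using the well-known uniform monotonicity estimate for the map $X\mapsto |X|_g^{p-2}X$.

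First I would expand the differentials: using the chain rule,
\begin{align*}
\nabla_g\Bigl(u-\tfrac{v^p}{u^{p-1}}\Bigr)=\Bigl(1+(p-1)\tfrac{v^p}{u^p}\Bigr)\nabla_g u-p\,\tfrac{v^{p-1}}{u^{p-1}}\nabla_g v,
\end{align*}
and the analogous identity with $u,v$ interchanged. Plugging these into the definition of $\mathscr{B}(u,v)$ and introducing the logarithmic gradients $A:=\nabla_g\log u=\nabla_g u/u$ and $B:=\nabla_g\log v=\nabla_g v/v$, so that $|\nabla_g u|_g^{p-2}g(\nabla_g u,\nabla_g v)=u^{p-1}v\,|A|_g^{p-2}g(A,B)$, a direct (if tedious) regrouping yields the clean identity
\begin{align*}
\mathscr{B}(u,v)=u^p\bigl[\,|A|_g^p+(p-1)|B|_g^p-p|B|_g^{p-2}g(A,B)\bigr]+v^p\bigl[\,|B|_g^p+(p-1)|A|_g^p-p|A|_g^{p-2}g(A,B)\bigr].
\end{align*}

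The convexity of the function $X\mapsto |X|_g^p$ on the tangent space gives, for any vectors $X,Y$, the elementary inequality $|X|_g^p+(p-1)|Y|_g^p-p|Y|_g^{p-2}g(X,Y)\ge 0$. Hence both bracketed terms above are nonnegative, which at once gives the lower bound
\begin{align*}
\mathscr{B}(u,v)\ge \min\{u^p,v^p\}\Bigl[\,|A|_g^p+|B|_g^p+(p-1)(|A|_g^p+|B|_g^p)-p(|A|_g^{p-2}+|B|_g^{p-2})g(A,B)\Bigr].
\end{align*}
The key algebraic simplification is then to observe that the bracket equals
\begin{align*}
p\bigl(|A|_g^{p-2}A-|B|_g^{p-2}B\bigr)\cdot_g(A-B),
\end{align*}
where $\cdot_g$ is the metric pairing; indeed, expanding $(|A|_g^{p-2}A-|B|_g^{p-2}B)\cdot_g(A-B)$ produces exactly the four terms on the right.

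Finally I would invoke the classical strong monotonicity estimate for the $p$-Laplacian vector field: there exists $c_p>0$ such that for all vectors $X,Y$,
\begin{align*}
\bigl(|X|_g^{p-2}X-|Y|_g^{p-2}Y\bigr)\cdot_g(X-Y)\ge c_p\,(|X|_g+|Y|_g)^{p-2}|X-Y|_g^2,
\end{align*}
(standard, see e.g.\ Lindqvist; the proof is pointwise and the metric enters only via an inner product on a fixed tangent space, so no curvature issue arises). Applied with $X=A$, $Y=B$ and substituted back with $A=\nabla_g\log u$, $B=\nabla_g\log v$, this delivers the asserted inequality with $C_p:=p\,c_p$. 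The whole argument is essentially pointwise and algebraic, so the only real obstacle is carrying out the expansion in the first step cleanly enough to recognize the $p(|A|_g^{p-2}A-|B|_g^{p-2}B)\cdot_g(A-B)$ structure; once that is in hand, nonnegativity of the two "Picone" summands and the monotonicity inequality do the rest.
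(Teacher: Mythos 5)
Your proof is correct. Note that the paper does not actually carry out this computation: its ``proof'' consists of citing the Euclidean versions of the inequality (Lemma~3.1 of [Xiang] for $1<p<2$ and Lemma~3.1 of [OSV] for $p>2$) and observing that the Hyperbolic statement follows by $p$-homogeneity and insertion of the conformal factor. What you have written out is essentially the content of those cited lemmas, done directly on the tangent space with the metric $g$ as the inner product, which is legitimate since the whole argument is pointwise and algebraic. Your two key steps are the standard ones: the identity
\begin{align*}
\mathscr{B}(u,v)=u^p\bigl[|A|_g^p+(p-1)|B|_g^p-p|B|_g^{p-2}g(A,B)\bigr]+v^p\bigl[|B|_g^p+(p-1)|A|_g^p-p|A|_g^{p-2}g(A,B)\bigr]
\end{align*}
with $A=\nabla_g\log u$, $B=\nabla_g\log v$, whose brackets are the classical (nonnegative) Picone remainders, followed by the recognition that their sum is $p\bigl(|A|_g^{p-2}A-|B|_g^{p-2}B\bigr)\cdot_g(A-B)$ and an appeal to the strong monotonicity inequality $\bigl(|X|_g^{p-2}X-|Y|_g^{p-2}Y\bigr)\cdot_g(X-Y)\ge c_p(|X|_g+|Y|_g)^{p-2}|X-Y|_g^2$, which indeed holds for every $p>1$ (Damascelli's lemma / Lindqvist). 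A pleasant feature of your write-up is that this single monotonicity estimate handles both regimes $p<2$ and $p>2$ at once, whereas the paper splits the citation between two references. The only cosmetic caveat is that the inequality is to be understood pointwise a.e.\ for $W^{1,p}_{\text{loc}}$ functions, with the chain-rule expansions valid a.e.\ where $u,v>0$; your argument is consistent with that reading.
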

	\begin{proof}
		Directly follows from the $p$-homogeneity and multiplying by the conformal factor in the Euclidean inequality from Lemma-3.1 in \cite{Xiang} when $1 < p < 2$ and Lemma-3.1 of \cite{OSV} when $p > 2$.
	\end{proof}
	\begin{lemma}\label{R2weakdecay} Let, $u$ be a solution to \eqref{R21eqn1}. Then for each $\ve > 0$ such that $\lambda + \ve < \lambda_{\text{max}}$ there is a constant $C_{\lambda + \ve} > 0$ such that \begin{align}\label{R2weak1} u(x) \le C_{\lambda+\ve} \left[\cosh \left(\frac{\dist_{\Bn}(0,x)}{2}\right)\right]^{-2\alpha_{\lambda + \ve}}, \, \forall \, x \in \Bn. \end{align} \end{lemma}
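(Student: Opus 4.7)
The strategy for proving \eqref{R2weak1} is to combine the super-solution from Lemma \ref{R2lemmasubsol} with a Picone-based weak comparison in the region where $u$ exceeds a suitable multiple of the barrier.

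By Theorem \ref{R2lemmavanish}, $u(x)\to 0$ as $\dist_{\Bn}(0,x)\to\infty$, so for any $\ve>0$ one can find $R_0>0$ with $u^{q-p}\le \ve$ outside $B_{R_0}(0)$; substituting in \eqref{R21eqn1} gives the differential inequality $-\Delta_p^{\Bn} u \le (\lambda+\ve)u^{p-1}$ on $\Bn\setminus \overline{B_{R_0}(0)}$. On the other hand, applying Lemma \ref{R2lemmasubsol} with $\lambda$ replaced by $\lambda+\ve$ and $m=2$, the barrier $v(x) := [\cosh(\dist_{\Bn}(0,x)/2)]^{-2\alpha_{\lambda+\ve}}$ satisfies the reverse inequality $-\Delta_p^{\Bn} v \ge (\lambda+\ve)v^{p-1} + c_{\lambda+\ve}e^{-\dist_{\Bn}(0,x)}v^{p-1}$ outside some ball $B_{R_1}(0)$. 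Setting $R := \max(R_0,R_1)$ and using that $u$ is bounded while $v$ has a positive minimum on $\overline{B_R(0)}$, one chooses $C = C_{\lambda+\ve}>0$ so large that $u < Cv$ strictly on $\overline{B_R(0)}$.

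To propagate this to all of $\Bn$, assume for contradiction that $\Omega_+ := \{u > Cv\}$ is nonempty; by the choice of $C$, $\Omega_+\subset \Bn\setminus\overline{B_R(0)}$. I would test the differential inequality for $u$ against $\phi := u - (Cv)^p/u^{p-1} \ge 0$ on $\Omega_+$, and the reversed inequality for $Cv$ against $\psi := Cv - u^p/(Cv)^{p-1}\le 0$ on $\Omega_+$, both extended by $0$ outside $\Omega_+$ where they vanish continuously on the free boundary $\{u=Cv\}$. Because $u^{p-1}\phi + (Cv)^{p-1}\psi \equiv 0$ pointwise, the zeroth-order contributions cancel and one is left with
\[
0\;\le\; \int_{\Omega_+}\mathscr{B}(u,Cv)\,dV_g \;\le\; -c_{\lambda+\ve}\int_{\Omega_+} e^{-\dist_{\Bn}(0,x)}(Cv)^{p-1}|\psi|\,dV_g \;\le\; 0.
\]
By the Picone inequality \eqref{R2picone}, $\mathscr{B}(u,Cv)$ is pointwise non-negative, so $\mathscr{B}(u,Cv) \equiv 0$ a.e.\ on $\Omega_+$, and Lemma \ref{R2piconelemma} then forces $u/v$ to be locally constant on $\Omega_+$. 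Continuity together with $u = Cv$ on $\partial\Omega_+$ contradicts the strict inequality $u>Cv$ inside $\Omega_+$, so $\Omega_+=\emptyset$ and $u\le Cv$ globally, which together with the estimate on $\overline{B_R(0)}$ yields \eqref{R2weak1}.

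The main obstacle I anticipate is the rigorous justification of $\phi, \psi$ as admissible test functions on the a priori unbounded set $\Omega_+$: one needs sufficient integrability, ultimately coming from $u\in\mathcal{D}^{1,p}(\Bn)$ and the decay of $v$. I would handle this via the standard approximation by smooth radial cutoffs $\eta_k$ supported in $B_{2k}(0)$, testing against $\eta_k\phi$ and $\eta_k\psi$, and passing $k\to\infty$; the decay of both $u$ and $v$ at infinity from Theorem \ref{R2lemmavanish} is what enables control of the annular boundary contributions from $\nabla\eta_k$ in the limit.
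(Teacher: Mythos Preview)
Your proposal is correct and follows essentially the same route as the paper's proof: the same barrier $v=[\cosh(\dist_{\Bn}(0,x)/2)]^{-2\alpha_{\lambda+\ve}}$, the same Picone-based comparison with the pair of test functions $u^{1-p}(u^p-(Cv)^p)_+$ and $(Cv)^{1-p}(u^p-(Cv)^p)_+$ (your $\phi$ and $-\psi$), and the same cutoff-and-pass-to-the-limit argument to justify admissibility. The only cosmetic differences are that you phrase the conclusion as a contradiction and retain the extra $c_{\lambda+\ve}e^{-\dist_{\Bn}(0,x)}v^{p-1}$ term from Lemma~\ref{R2lemmasubsol}, whereas the paper simply uses $-\Delta_p^{\Bn}v_{\lambda+\ve}\ge(\lambda+\ve)v_{\lambda+\ve}^{p-1}$ and notes that the resulting right-hand side $(u^{q-p}-\ve)(u^p-v^p)_+$ is already $\le 0$ on the support of the test functions.
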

	\begin{proof} 
		Recall from Lemma-\ref{R2lemmasubsol}, the function $\dis v_{\lambda+\ve}(x) := C_{\lambda + \ve}\left[\cosh \left(\frac{\dist_{\Bn}(0,x)}{2}\right)\right]^{-2\alpha_{\lambda + \ve}} \in \mathcal{D}^{1,p}(\Bn)$ is a super-solution to the equation 
		\begin{align}\label{R2eqw1} 
			-\Delta_p^{\Bn} v_{\lambda + \ve} - \lambda v_{\lambda + \ve}^{p-1} \ge \ve v_{\lambda + \ve}^{p-1}. 
		\end{align} 
		Using Lemma-\ref{R2lemmavanish} we may choose $R_{\lambda + \ve} > 0$ large enough such that $u^{q - p}(x) \le \ve$ in $\Bn \setminus B_{R_{\lambda + \ve}}$. Then $u$ is a subsolution to the equation 
		\begin{align}\label{R2eqw2} 
			-\Delta_p^{\Bn} u - \lambda u^{p-1} \le \ve u^{p-1}. 
		\end{align} 
		Finally we may fix $C_{\lambda + \ve} > 0$ large enough such that $u \le v_{\lambda + \ve}$ in $B_{R_{\lambda + \ve}}$. Let $\eta \in C_c^\infty(\Bn)$ be a non-negative test function such that $\eta \equiv 1$ in $B_R$ and $\eta \equiv 0$ in $\Bn \setminus B_{R+1}$ where, $R > R_{\lambda + \ve}$. Then testing \eqref{R2eqw2} with $\varphi_1 := \eta u^{1-p}(u^p - v_{\lambda+\ve}^p)_+$ and \eqref{R2eqw1} with $\varphi_2 := \eta v_{\lambda+\ve}^{1-p}(u^p - v_{\lambda+\ve}^p)_+$ which are admissible test functions supported in $B_{R+1} \setminus B_{R_{\lambda + \ve}}$ and subtracting we get 
		\begin{align}\label{R2eqw3} 
			& \int_{\Bn \cap \left\{u \ge v_{\lambda + \ve}\right\}} \eta \mathscr{B}\left(u,v_{\lambda + \ve}\right) \,dV_g \nn \\ &\le \int_{\Bn \cap \left\{u \ge v_{\lambda + \ve}\right\}} \eta (u^{q - p} - \ve)(u^p - v_{\lambda + \ve}^p)_+ \,dV_g + \int_{B_{R+1} \setminus B_{R}} |\nabla_g \eta|_g \left(|\nabla_g u|_g^{p-1}u + |\nabla_g \log v_{\lambda+\ve}|_g^{p-1} u^{p}\right) \,dV_g \nn \\& \le \int_{B_{R+1} \setminus B_{R}} |\nabla_g \eta|_g \left(|\nabla_g u|_g^{p-1}u + |\nabla_g \log v_{\lambda+\ve}|_g^{p-1} u^{p}\right) \,dV_g 
		\end{align} 
		where, $\mathscr{B}\left(u,v_{\lambda + \ve}\right)$ is as in \eqref{R2picone}. On the RHS of \eqref{R2eqw3} we use the estimates $|\nabla_g \eta|_g \le c$ (for all $R > 0$), 
		$$\int_{B_{R+1}\setminus B_R} |\nabla_g u|_g^{p-1}u \,dV_g \le \lVert \nabla_g u \rVert_{L^p(B_{R+1}\setminus B_R)}^{p-1} \lVert  u \rVert_{L^p(B_{R+1}\setminus B_R)} \to 0 $$ 
		as $R \to \infty$ and $|\nabla_g \log v_{\lambda + \ve}|_g \le A_{\lambda + \epsilon}$ in $\Bn$.
		Using the Picone inequality \eqref{R2picone} on the LHS of \eqref{R2eqw3} and letting $R \to \infty$ on the RHS of \eqref{R2eqw3} we get 
		\begin{align}\label{R2eqw4} 
			C_p \int_{\Bn \setminus B_{R_{\lambda+ \ve}} \cap \left\{u \ge v_{\lambda + \ve}\right\}} v_{\lambda + \ve}^p\left(|\nabla_g \log u|_g + |\nabla_g \log v_{\lambda +\ve}|_g\right)^{p-2}|\nabla_g (\log u - \log v_{\lambda + \ve})|_g^2 \,dV_g \le 0.
		\end{align} 
	    Since, $\nabla_g v_{\lambda + \ve} \neq 0$ in $\Bn \setminus \{0\}$, from \eqref{R2eqw4} we must have $\log u - \log v_{\lambda + \ve} = C_0$ in $\Bn \setminus B_{R_{\lambda+ \ve}} \cap \left\{u \ge v_{\lambda + \ve}\right\}$. Since, $u \le v_{\lambda + \ve}$ on $\partial B_{R_{\lambda+ \ve}}$ we conclude $C_0 = 0$ i.e., $u = v_{\lambda + \ve}$ in $\Bn \setminus B_{R_{\lambda+ \ve}} \cap \left\{u \ge v_{\lambda + \ve}\right\}$. Thus $u \le v_{\lambda + \ve}$ in $\Bn \setminus B_{R_{\lambda + \ve}}$ as well. \end{proof}
    \,
	\\ Now we proceed with the proof of sharp decay estimate and gradient estimate for a solution of \eqref{R21eqn1}.
	\begin{theorem}\label{R2sharpdecay} 
		Let, $u$ be a solution of \eqref{R21eqn1}. Then there are  constants $C_1, C_2 > 0$ and $m \ge 2$ (chosen large enough depending on the exponent $q$) such that 
		\begin{align}\label{R2sharp1} C_1\left[\sinh \left(\frac{\dist_{\Bn}(0,x)}{2}\right)\right]^{-2\al} \le u(x) \le C_2\left[\cosh \left(\frac{\dist_{\Bn}(0,x)}{m}\right)\right]^{-m\al}, \, \forall \, x \in \Bn \setminus B_{\overline{R}}
		\end{align} 
	for some $\overline{R} > 0$. 
    \end{theorem}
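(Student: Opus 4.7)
The proof splits naturally into an upper bound and a lower bound, each obtained by a Picone-type weak comparison (Lemma \ref{R2piconelemma}) between $u$ and the explicit barriers of Lemma \ref{R2lemmasubsol}. The essential new input on the upper-bound side is the non-sharp decay provided by Lemma \ref{R2weakdecay}, which I bootstrap to sharpness by choosing the parameter $m$ large. Throughout, I write $t = \dist_{\Bn}(0,x)$.

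For the upper bound, fix $\ve > 0$ small enough that $\lambda + \ve < \lambda_{\text{max}}$. By Lemma \ref{R2weakdecay}, $u(x) \le C_\ve [\cosh(t/2)]^{-2\alpha_{\lambda+\ve}}$ on $\Bn$, so $u^{q-p}(x) \le C e^{-(q-p)\alpha_{\lambda+\ve} t}$ for $t$ large. I then choose $m \ge 2$ with $(q-p)\alpha_{\lambda+\ve} > 2/m$; this is the exact mechanism by which $m$ is forced to depend on $q$. It gives $u^{q-p} \le c_\lambda e^{-2t/m}$ outside a ball $B_{R_0}$, and consequently $u$ is a sub-solution of
$$-\Delta_p^{\Bn} u - (\lambda + c_\lambda e^{-2t/m})u^{p-1} \le 0$$
in $\Bn \setminus B_{R_0}$, while Lemma \ref{R2lemmasubsol} says $v := C_2[\cosh(t/m)]^{-m\al}$ is a super-solution of the same equation in $\Bn \setminus B_{R_\lambda}$. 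Pick $R = \max(R_0, R_\lambda)$ and choose $C_2$ large enough that $u \le v$ on $\overline{B_R}$, which is possible by the global boundedness of $u$ from Theorem \ref{R2lemmavanish}. I then repeat the Picone argument of Lemma \ref{R2weakdecay} verbatim with $u$ playing the role of the sub-solution and $v$ that of the super-solution: test with $\eta u^{1-p}(u^p - v^p)_+$ and $\eta v^{1-p}(u^p - v^p)_+$, where $\eta \equiv 1$ on $B_{R'}$ and vanishes outside $B_{R'+1}$. The $(\lambda + c_\lambda e^{-2t/m})$-terms cancel identically, and the boundary integrals
$$\int_{B_{R'+1}\setminus B_{R'}} |\nabla_g \eta|_g \bigl(|\nabla_g u|_g^{p-1} u + |\nabla_g \log v|_g^{p-1} u^p\bigr)\,dV_g$$
tend to $0$ as $R' \to \infty$ by Hölder and the inclusions $|\nabla_g u|_g, u \in L^p(\Bn)$ (from Poincar\'e in $\Bn$), together with the uniform bound $|\nabla_g \log v|_g \le A$. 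Picone's non-negativity then forces $\{u > v\}$ to be empty in $\Bn \setminus B_R$.

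For the lower bound, Lemma \ref{R2lemmasubsol} gives that $\tilde v(x) := [\sinh(t/2)]^{-2\al}$ satisfies $-\Delta_p^{\Bn}\tilde v - \lambda \tilde v^{p-1} \le 0$ in $\Bn \setminus B_{R_\lambda}$, and so does $C_1\tilde v$ for every $C_1 > 0$. On the other hand $u$ is a super-solution of the very same equation, since $-\Delta_p^{\Bn} u - \lambda u^{p-1} = u^{q-1} \ge 0$. The strong maximum principle for the quasilinear operator guarantees $u > 0$ on $\Bn$, hence $u$ is bounded below by a positive constant on $\overline{B_R}$ for any $R \ge R_\lambda$. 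Shrink $C_1 > 0$ until $C_1\tilde v \le u$ on $\overline{B_R}$. Now run the mirror Picone comparison: test the sub-solution inequality for $C_1\tilde v$ with $\eta (C_1\tilde v)^{1-p}((C_1\tilde v)^p - u^p)_+$ and the super-solution inequality for $u$ with $\eta u^{1-p}((C_1\tilde v)^p - u^p)_+$, and subtract. The $\lambda$-terms cancel, the extra $u^{q-1}$ on the $u$-side contributes with a favourable sign, and as the outer cutoff radius goes to infinity the boundary integrals tend to zero by the integrability of $\tilde v^p$ at infinity (which uses $p\al > n-1$, guaranteed by the choice of root in \eqref{R2root}) together with the $\mathcal{D}^{1,p}\cap L^p$ control on $u$. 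The Picone identity then collapses $\{C_1\tilde v \ge u\} \cap (\Bn \setminus B_R)$ to be trivial, yielding the lower bound.

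The principal technical obstacle is the quantitative matching in the upper bound: one must trade the weak decay exponent $\alpha_{\lambda+\ve}$ against the exponential excess $c_\lambda e^{-2t/m}$ of the super-solution, and the resulting constraint $m > 2/((q-p)\alpha_{\lambda+\ve})$ is precisely what forces $m$ to depend on $q$. A secondary delicate point is the vanishing at infinity of the Picone boundary integrals; this rests on $u \in \mathcal{D}^{1,p}(\Bn) \cap L^p(\Bn)$, on the uniform bounds for $|\nabla_g \log v|_g$ and $|\nabla_g \log \tilde v|_g$ coming from the explicit form of the barriers, and, in the lower-bound argument, on the strict inequality $\al > (n-1)/p$ from \eqref{R2root} to make the $\tilde v^p$ tail integrable against the hyperbolic volume element.
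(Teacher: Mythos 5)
Your upper-bound argument coincides with the paper's: the same bootstrap of Lemma \ref{R2weakdecay} to get $u^{q-p}\le c_\lambda e^{-2t/m}$ outside a large ball (with the identical constraint $m>2/((q-p)\alpha_{\lambda+\ve})$ forcing $m$ to depend on $q$), the same Picone test pair $\eta u^{1-p}(u^p-v^p)_+$, $\eta v^{1-p}(u^p-v^p)_+$, and the same treatment of the cutoff terms via H\"older and $|\nabla_g\log v|_g\le A$. That half is correct.

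The lower-bound half, which the paper only asserts is ``much simpler,'' contains a genuine gap in your justification of the boundary terms. Writing $w:=C_1\tilde v$ with $\tilde v=[\sinh(t/2)]^{-2\al}$, the cutoff contribution from testing the super-solution inequality for $u$ with $\eta u^{1-p}(w^p-u^p)_+$ is
\begin{align*}
\int_{B_{R'+1}\setminus B_{R'}}|\nabla_g\eta|_g\,|\nabla_g u|_g^{p-1}\,u^{1-p}(w^p-u^p)_+\,dV_g
\;\le\;\int_{B_{R'+1}\setminus B_{R'}}|\nabla_g\eta|_g\left(\frac{|\nabla_g u|_g}{u}\right)^{p-1} w^p\,dV_g ,
\end{align*}
and neither $u\in\mathcal{D}^{1,p}\cap L^p$ nor the integrability of $\tilde v^p$ (your stated reasons) controls the negative power $u^{1-p}$: on the set $\{w\ge u\}$ you only have an upper bound for $u$, and a lower bound for $u$ at infinity is exactly what you are trying to prove, so a H\"older estimate here is circular. (This problem is absent in the upper bound, where the roles are reversed and the corresponding term is $|\nabla_g u|_g^{p-1}u$.) The gap is fixable with a tool already available at this stage: by Theorem \ref{R2lemmavanish} the coefficient $\lambda+u^{q-p}$ is globally bounded, so Harnack's inequality plus the $C^{1,\sigma}$ estimate on unit balls centred at arbitrary points (using the transitivity of $\mathcal{M}(\Bn)$, as in the first half of the proof of Theorem \ref{R2gradestimate}) gives the pointwise bound $|\nabla_g u|_g\le Au$ on $\Bn\setminus B_{R_0}$; with this the bracketed factor is bounded by $A^{p-1}$ and the boundary term vanishes by the integrability of $w^p$, i.e.\ by $p\al>n-1$. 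You should state this gradient bound explicitly before running the lower-bound comparison. A minor additional slip: since $\tilde v$ blows up at the origin, you cannot arrange $C_1\tilde v\le u$ on all of $\overline{B_R}$; the comparison only needs $C_1\tilde v\le u$ on $\partial B_{\overline R}$ (equivalently on the annulus where $\tilde v$ is bounded), with the Picone identity run on the exterior domain $\Bn\setminus B_{\overline R}$.
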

	\begin{proof} We prove the upper bound. The proof of lower bound is much simpler and follows similarly. 
		\\ Recall from Lemma-\ref{R2lemmasubsol} that $\dis v(x) := C_2\left[\cosh \left(\frac{\dist_{\Bn}(0,x)}{m}\right)\right]^{-m\al}$ is a super-solution to the equation 
		\begin{align}\label{R2comp1} 
			-\Delta_p^{\Bn} v - \lambda v^{p-1} \ge c_\lambda e^{-\frac{2}{m}\dist_{\Bn}(0,x)} v^{p-1} \text{ in } \Bn\setminus B_{R_\lambda} 
		\end{align} 
	    for $C_2 > 0$ and $m \ge 2$.
		\\ Let $\eta \in C_c^\infty(\Bn)$ be a non-negative test function such that $\eta \equiv 1$ in $B_R$ and $\eta \equiv 0$ in $\Bn \setminus B_{R+1}$ where, $R > \overline{R} > R_\lambda$ ($\overline{R}$ to be determined later).
		\\ Then testing equation \eqref{R21eqn1} with $\varphi_1 := \eta u^{1-p}(u^p - v^p)_+$ and equation \eqref{R2comp1} with $\varphi_2 := \eta v^{1-p}(u^p - v^p)_+$ and subtracting we have 
		\begin{align}\label{R2comp2} 
			& \int_{\Bn \cap \left\{u \ge v\right\}} \eta \mathscr{B}(u,v) \,dV_g \nn \\ &\le \int_{\Bn \cap \left\{u \ge v\right\}} \eta \left(u^{q - p} - c_\lambda e^{-\frac{2}{m}\dist_{\Bn}(0,x)}\right)(u^p - v^p)_+ \,dV_g \nn \\ & \quad + \int_{B_{R+1} \setminus B_{R}} |\nabla_g \eta|_g \left(|\nabla_g u|_g^{p-1}u + |\nabla_g \log v|_g^{p-1} u^{p}\right) \,dV_g := I_1 + I_2, 
		\end{align}
	    where, $\mathscr{B}(u,v)$ is as in \eqref{R2picone}.
		\\ Now, using the estimate from Lemma-\ref{R2weakdecay} we may choose $\overline{R} > 0$ large enough such that $$u^{q-p} (x) \le C_{\lambda+\ve}^{q-p}e^{-(q-p)\alpha_{\lambda + \ve}\dist_{\Bn}(0,x)} \le c_\lambda e^{-\frac{2}{m}\dist_{\Bn}(0,x)}$$ for $x \in \Bn \setminus B_{\overline{R}}$ (the later is ensured by choosing $m > \frac{2}{(q-p)\alpha_{\lambda + \ve}}$) and choose $C_2 > 0$ large enough such that $u \le v$ in $B_{\overline{R}}$. Thus, $I_1 \le 0$ and estimating as in the proof of Lemma-\ref{R2weakdecay} we have $I_2 \to 0$ as $R \to \infty$. Thus from the Picone inequality \eqref{R2picone} we get \begin{align} C_p \int_{\Bn \setminus B_{\overline{R}} \, \cap \, \left\{u \ge v\right\}} v^p \left(|\nabla_g \log u|_g + |\nabla_g \log v|_g\right)^{p-2}|\nabla_g (\log u - \log v)|_g^2 \,dV_g \le 0. \end{align} Since, $\nabla_g v \neq 0$ in $\Bn \setminus \left\{0\right\}$ we get the desired upper bound \eqref{R2sharp1} as before. \end{proof}
	
	\begin{theorem}\label{R2gradestimate} Let, $u$ be a positive solution of \eqref{R21eqn1}. Then $\exists \, A > 0$ such that the gradient estimate \begin{align} A^{-1} u(x) \le |\nabla_g u|_g(x) \le A u(x) \end{align} holds in $x \in \Bn \setminus B_{R_0}$ for some $R_0$ large. In fact we have $\lim\limits_{\dist_{\Bn}(0,x) \to \infty} \frac{|\nabla_g u|_g}{u}(x) = \al$, where, $\al$ is as in \eqref{R2root}. \end{theorem}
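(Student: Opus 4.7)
The proof splits into the upper bound (routine) and the lower bound with sharp ratio (the substantive part). For the upper bound, by the sharp pointwise decay in Theorem \ref{R2sharpdecay}, $u$ is positive on any unit ball $B_1(x)$ for $x$ far from the origin, and satisfies the quasilinear equation with nonlinearity $(\lambda + u^{q-p})u^{p-1}$ whose coefficient is uniformly bounded (since $u \to 0$ at infinity by Theorem \ref{R2lemmavanish}). The Harnack inequality gives $\sup_{B_1(x)} u \le C \inf_{B_1(x)} u \le C u(x)$, and local $C^{1,\sigma}$ estimates from \cite{DiBe,Tol} yield $|\nabla_g u|_g(x) \le C' \sup_{B_1(x)} u \le A u(x)$.

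For the lower bound and the identification of the limiting ratio I would argue by contradiction and perform a blow-up at infinity. Suppose there exist $x_k \in \Bn$ with $\dist_{\Bn}(O, x_k) \to \infty$ and $|\nabla_g u|_g(x_k)/u(x_k) \to \gamma$ with $\gamma \neq \al$. Choose hyperbolic isometries $\tau_k \in \mathcal{M}(\Bn)$ with $\tau_k(O) = x_k$ and set $v_k(y) := u(\tau_k(y))/u(x_k)$. By the isometry invariance of \eqref{R21eqn1},
$$-\Delta_p^{\Bn} v_k - \lambda v_k^{p-1} = u(x_k)^{q-p} v_k^{q-1}, \quad v_k(O) = 1, \quad |\nabla_g v_k|_g(O) \to \gamma,$$
and the right-hand side vanishes in the limit since $u(x_k) \to 0$. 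The sharp bounds $ce^{-\al t} \le u \le Ce^{-\al t}$ from Theorem \ref{R2sharpdecay}, together with the reverse triangle inequality $|\dist_{\Bn}(O, \tau_k(y)) - \dist_{\Bn}(O, x_k)| \le \dist_{\Bn}(O, y)$, give uniform two-sided bounds for $v_k$ on compact sets. Standard $C^{1,\sigma}$ estimates then produce a subsequential limit $v_k \to E$ in $C^1_{\text{loc}}(\Bn)$ with $E$ a positive solution of \eqref{R2eigenfunction}, $E(O) = 1$, $|\nabla_g E|_g(O) = \gamma$.

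After passing to a further subsequence, $\tau_k^{-1}(O) \to \xi \in \partial_\infty \Bn \cong \mathbb{S}^{n-1}$. A Busemann-type computation in the Poincar\'e ball shows that $\dist_{\Bn}(O, \tau_k(y)) - \dist_{\Bn}(O, x_k) \to \log(\cosh r - (\xi\cdot\theta)\sinh r)$ locally uniformly, where $(r,\theta)$ are the geodesic polar coordinates of $y$. Combined with the sharp exponential asymptotics of $u$, this refines the previous crude bound on $v_k$ to the matched two-sided bound $E_\xi(y) \le E(y) \le C E_\xi(y)$ on all of $\Bn$, with $E_\xi$ as in \eqref{R2eigen0}. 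Theorem \ref{R2eigenclfn} then forces $E = c_0 E_\xi$ for some $c_0 \in [1, C]$. A direct differentiation of \eqref{R2eigen0} at $O$ gives $|\nabla_g E_\xi|_g(O)/E_\xi(O) = \al$, whence $\gamma = \al$, contradicting $\gamma \neq \al$. This simultaneously establishes the lower bound $|\nabla_g u|_g \ge A^{-1} u$ at infinity and the pointwise limit $\lim_{\dist_{\Bn}(O,x) \to \infty} |\nabla_g u|_g(x)/u(x) = \al$.

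The main obstacle is the step immediately preceding the application of Theorem \ref{R2eigenclfn}: upgrading the crude pointwise control on $v_k$ coming from the exponential rate alone to the \emph{matched} bound $E_\xi \le E \le CE_\xi$ with the correct horospherical profile. This requires not just the exponential rate $\al$ from Theorem \ref{R2sharpdecay}, but also the uniform convergence of the distance differences $\dist_{\Bn}(O, \tau_k(y)) - \dist_{\Bn}(O, x_k)$ on compact sets to a specific Busemann function on $\Bn$ associated with the boundary limit $\xi$, which is precisely what encodes the profile $E_\xi$.
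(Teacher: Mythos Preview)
Your proposal is correct and follows the same blow-up strategy as the paper: pull back by an isometry sending $0$ to $x_k$, normalize by $u(x_k)$, pass to a $C^1_{\text{loc}}$ limit solving \eqref{R2eigenfunction} with two-sided $E_\xi$ bounds, and invoke the classification Theorem~\ref{R2classeigen} to force the ratio $|\nabla_g v_\infty|_g(0)/v_\infty(0) = \al$. The paper also treats the lower bound and the limit identification by this same mechanism (splitting them into two short passes rather than your single contradiction with $\gamma \neq \al$, but that is cosmetic).

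The one place worth comparing is exactly the step you flag as the main obstacle. The paper bypasses any Busemann-function analysis by working directly with the explicit distance identity in the ball model,
\[
\sinh^2\!\left(\tfrac{1}{2}\dist_{\Bn}(x,y)\right) = \frac{|x-y|^2}{(1-|x|^2)(1-|y|^2)},
\]
which gives the comparison function at finite $k$ in closed form,
\[
E_k(x) = \left(\frac{|x_k|^2(1-|x|^2)}{|x-x_k|^2}\right)^{\al},
\]
so that the convergence $E_k \to E_\xi = \bigl(\tfrac{1-|x|^2}{|x-\xi|^2}\bigr)^{\al}$ as $x_k \to \xi \in \mathbb{S}^{n-1}$ is an elementary limit in Euclidean coordinates. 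Your Busemann route is equally valid and more intrinsic, but what you call an obstacle dissolves once this coordinate identity is used; the paper's version is a two-line computation rather than a separate asymptotic lemma.
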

	
	\begin{proof} 
		
		Using the sharp decay estimates Theorem-\ref{R2sharpdecay} on a positive solution of \eqref{R21eqn1} we have $$ce^{-\alpha_\lambda \dist_{\Bn}(0,x)} \le u(x) \le Ce^{-\alpha_\lambda \dist_{\Bn}(0,x)}$$ in $\Bn$ for some positive constants $c, C > 0$. 
		\\ Let, $x_0 \in \Bn \setminus B_{R_0 + 1}$ and define $v_0(x) = (u \circ \tau_{0})$ where, $\tau_{0} \in \mathcal{M}(\Bn)$ is a Hyperbolic reflection such that $\tau_{0}(0) = x_0$ and $\tau_{0}(x_0) = 0$.
		Then from equation \eqref{R21eqn1} we get 
		\begin{align} -\Delta_p^{\Bn} v_0 = (\lambda + v_0^{q-p}) v_0^{p-1} = h_0 v_0^{p-1} \end{align} where, $h_0 = \lambda + O(e^{-(q-p) \alpha_\lambda R_0})$ in $B_1$. Then using $C^{1,\sigma}$ estimate (see \cite{DiBe}, \cite{Tol}) and Harnack inequality (see \cite{T}) for this equation we have \begin{align} \sup_{B_{1/2}} |\nabla_g v_0|_g \le C_1\sup_{B_{1}} \, v_0 \le C_2 v_0(0). \end{align} \\ Reverting back to $u$ we get the estimate \begin{align} |\nabla_g u|_g(x_0) \le A u(x_0) \end{align} where the constant $A$ is independent of $x_0 \in \Bn \setminus B_{R_0 + 1}$. \\\\ Now to see the gradient estimate from below, suppose to the contrary there is a sequence of points $ \{x_k\}_{k \in \mathbb{N}} \subset \Bn$ such that $|\nabla_g u|_g(x_k) \le \epsilon_k u(x_k)$ and $R_k := \dist_{\Bn}(0,x_k) \to \infty$ where, $\epsilon_k \to 0^+$ as $k \to \infty$. Define $v_k := \dfrac{(u \circ \tau_{k})}{u(x_k)}$ where, $\tau_k \in \mathcal{M}(\Bn)$ is a Hyperbolic reflection such that $\tau_{k}(0) = x_k$ and $\tau_k(x_k) = 0$. Then $v_k(0) = 1$, $|\nabla_g v_k|_g(0) \le \epsilon_k$ and $v_k$ satisfies the equation \begin{align} \label{R2link} -\Delta_p^{\Bn} v_k = h_k v_k^{p-1} \end{align}  where, $h_k := \lambda + (u \circ \tau_{k})^{q-p} \to \lambda$ uniformly in $B_R$ as $k \to \infty$ for each $R > 0$. Furthermore, using the sharp decay estimate of $u$ we have $v_k$ satisfies the pointwise bounds 
		\begin{align}\label{R2boundk}
			C_1 E_k(x) \le v_k(x) \le C_2E_k(x) 
		\end{align} 
	    for $x \in B_R$ and the positive constants $C_1, C_2 > 0$ (independent of $R$) where, \begin{align} E_k(x) := \left(\sinh \left(\frac{1}{2}\dist_{\Bn}(0,x_k)\right)\right)^{2\alpha_\lambda} \left(\sinh \left(\frac{1}{2}\dist_{\Bn}(x,x_k)\right)\right)^{-2\alpha_\lambda} = \left(\frac{|x_k|^2(1 - |x|^2)}{|x - x_k|^2}\right)^{\alpha_\lambda} \end{align} uniformly for all $k \ge K_R$ sufficiently large. Here we used the fact that 
		\begin{align}
			(u \circ \tau_k)(x) \approx e^{-\al \dist_{\Bn}(0,\tau_k(x))} = e^{-\al \dist_{\Bn}(\tau_k(0),x)} \approx \left(\sinh \left(\frac{1}{2}\dist_{\Bn}(x_k,x)\right)\right)^{-2\al}
		\end{align}
	    for $x \in B_R$ with uniform constants (independent of $R$) for all $k \ge K_R$ sufficiently large and the identity (see \cite{stoll}, equation (2.2.4))
	    \begin{align}
	    	\sinh^2\left(\frac{1}{2}\dist_{\Bn}(x,y)\right) = \frac{|x - y|^2}{(1 - |x|^2)(1 - |y|^2)}, \, \forall \, x,y \in \Bn.
	    \end{align}
		\\\\ Then using $C^{1,\sigma}$ estimate on \eqref{R2link} in $B_R$ we get $[v_k]_{C^{1,\sigma}(B_R)} \le C_R\sup_{B_R} v_k$ where, the constant $C_R$ only depends on $B_R$ for all $k \ge K_R$ sufficiently large. Passing through a subsequence if necessary (which we still denote as $\{v_k\}_{k \in \mathbb{N}}$) we get $v_k \to v_{R,\infty}$ in $C^1(B_R)$ and $x_k \to \xi \in \partial_{\infty} \Bn (= \mathbb{S}^{n-1})$ as $k \to \infty$. Then $v_{R,\infty}$ satisfies the equation \begin{align} \label{R2lininfty} -\Delta_p^{\Bn}v_{R,\infty} = \lambda v_{R,\infty}^{p-1}. \end{align} Letting $R \to \infty$ and using a diagonalization argument we have a subsequence $v_k \to v_\infty$ locally uniformly in $C^1(\Bn)$ such that $v_\infty$ satisfies the equation \begin{align} -\Delta_p^{\Bn} v_\infty = \lambda v_\infty^{p-1} \end{align} in $\Bn$ and from \eqref{R2boundk} the pointwise bounds \begin{align} C_1 E_\xi(x) \le v_\infty(x) \le C_2E_\xi(x) \end{align} where, $E_\xi(x) := \left(\frac{1 - |x|^2}{|x - \xi|^2}\right)^{\alpha_\lambda}$ with $v_\infty(0) = \lim_{k \to \infty} v_k(0) = 1$ and $|\nabla_g v_\infty|_g(0) = \lim\limits_{k \to \infty} |\nabla_g v_k|_g(0) = 0$. \\\\ But from the classification of eigenfunctions from Theorem-\ref{R2classeigen} we know $v_\infty \equiv E_\xi$, which contradicts $|\nabla_g E_\xi|_g(0) = \alpha_\lambda > 0$. This completes the proof of the gradient estimate from below. By the same argument let $\{x_k\}_{k \in \mathbb{N}}$ be any sequence of points such that $\dist_{\Bn}(0,x_k) \to \infty$, then we can extract a subsequence $\{x_{k_j}\}$ such that $\frac{|\nabla_g u|_g}{u}(x_{k_j}) \to |\nabla_g E_{\xi}|_g(0) = \al$ for some $\xi \in \mathbb{S}^{n-1}$. Therefore, the limit of $\frac{|\nabla_g u|_g}{u}(x)$ as $\dist_{\Bn}(0,x) \to \infty$ exists and equals $\al$. \end{proof}
		\,
		\\ Keeping in continuation with the previous notation we state Theorem-\ref{R2eigenclfn} in the Poincar\'{e} ball model $\Bn$. The proof is mostly carried out in the equivalent upper half-space model of the Hyperbolic space, but we expand upon the parallel picture in the ball model in Remark-\ref{R2rmkballmodel}.
	    \begin{theorem}\label{R2classeigen} Let, $\lambda \in[0,\lambda_{\text{max}})$ and $v$ be a $C^1$ solution of the equation \begin{align}\label{R2eigeneq} -\Delta_p^{\Bn} v = \lambda v^{p-1} \end{align} in $\Bn$ such that there exists positive constant $C > 0$ and a point $\xi \in \mathbb{S}^{n-1}$ such that \begin{align} E_{\xi} \le v \le CE_\xi \end{align} where, $E_\xi(x) := \left(\frac{1 - |x|^2}{|x - \xi|^2}\right)^{\alpha_\lambda}$. Then $\exists \, c_0 \in [1,C] $ such that $v \equiv c_0E_\xi$ in $\Bn$. This in particular implies $\frac{|\nabla_g v|_g}{v} = \frac{|\nabla_g E_\xi|_g}{E_\xi} \equiv \al$ in $\Bn$.
	    \end{theorem}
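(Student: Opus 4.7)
The strategy is a Picone-based comparison of $v$ with $c_0 E_\xi$ for an optimally chosen $c_0$, following the template of Lemma \ref{R2weakdecay} and Theorem \ref{R2sharpdecay}, adapted here to two positive functions satisfying the \emph{same} eigenvalue equation. First I would transport the problem to the upper half-space model via a hyperbolic isometry sending $\xi$ to $\infty$. Using the expression for $\Delta_p^{\Hn}$ from Section \ref{R2prelim} one checks directly that $V(x) := x_n^{\al}$ satisfies $-\Delta_p^{\Hn} V = \lambda V^{p-1}$ (the identity reduces precisely to $f(\al) = \lambda$), and a conformal computation shows that the pullback of $E_\xi$ under the isometry is a positive constant multiple of $V$. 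Absorbing this constant, the theorem reduces to: if $v$ is a positive $C^1$ solution of $-\Delta_p^{\Hn} v = \lambda v^{p-1}$ in $\{x_n > 0\}$ with $V \le v \le CV$, then $v \equiv c_0 V$ for some $c_0 \in [1, C]$. A useful feature of this model is the explicit identity $|\nabla_g \log V|_g \equiv \al$.

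Set $c_0 := \inf_{\{x_n>0\}} v/V \in [1,C]$, so $v \ge c_0 V$ pointwise. Both $v$ and $c_0 V$ are weak solutions of $-\Delta_p^{\Hn} u = \lambda u^{p-1}$. For a non-negative cutoff $\eta \in C_c^1(\{x_n>0\})$, I would test the equation for $v$ with $\varphi_1 = \eta\, v^{1-p}(v^p - (c_0V)^p)$ and the equation for $c_0 V$ with $\varphi_2 = \eta\,(c_0V)^{1-p}(v^p - (c_0V)^p)$; both are non-negative since $v \ge c_0 V$. On the right-hand side the two eigenvalue contributions coincide (each equals $\lambda \int \eta(v^p - (c_0V)^p)\, dV_g$) and cancel upon subtracting, leaving, by the same manipulation as in the proof of Theorem \ref{R2sharpdecay},
\[
\int \eta\, \mathscr{B}(v, c_0V)\, dV_g \;\le\; \int |\nabla_g \eta|_g \bigl[\,|\nabla_g v|_g^{p-1}\, v \;+\; |\nabla_g \log V|_g^{p-1}\, v^p\,\bigr]\, dV_g,
\]
with $\mathscr{B}(v, c_0V) \ge 0$ by Lemma \ref{R2piconelemma}.

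The main obstacle is to construct cutoffs $\eta_k$ exhausting $\{x_n > 0\}$ for which the right-hand side vanishes. My key inputs would be (i) the uniform gradient estimate $|\nabla_g v|_g \le A\, v$, obtained from hyperbolic-invariance combined with Harnack's inequality and interior $C^{1,\sigma}$-regularity applied to the eigenvalue equation (exactly the scheme used in the first half of the proof of Theorem \ref{R2gradestimate}, now with only $v \le CV$ as input), (ii) the bound $v \le CV$, and (iii) $|\nabla_g \log V|_g = \al$. These three reduce both pieces of the boundary integrand to multiples of
\[
\int V^p |\nabla_g \eta|_g\, dV_g \;=\; \int x_n^{p\al + 1 - n}\,|\nabla \eta|\, dx,
\]
in which the positivity of $p\al + 1 - n$ (a consequence of $\al > \frac{n-1}{p}$, see \eqref{R2root}) tames the behaviour near $\{x_n = 0\}$, while a product cutoff in $x_n$ and in $|x'|$ with carefully interlocked rates is required to absorb the growth of $V^p$ as $x_n \to \infty$ together with the infinite $x'$-extent. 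Once the boundary terms vanish along a sequence, we obtain $\mathscr{B}(v, c_0V) \equiv 0$ a.e.; the equality case of Picone, together with $\nabla_g V \ne 0$ everywhere, forces $\log v - \log(c_0V)$ to be constant, and the choice $c_0 = \inf v/V$ pins that constant to $0$. Hence $v = c_0V$, which transports back to $v \equiv c_0 E_\xi$; the concluding identity $|\nabla_g v|_g/v \equiv \al$ is then the model identity $|\nabla_g V|_g/V = \al$.
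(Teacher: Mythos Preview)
Your global Picone comparison has a genuine obstruction at the cutoff step. In the half-space model the boundary term you isolate is controlled by
\[
\int_{\{x_n>0\}} V^p\,|\nabla_g\eta|_g\,dV_g \;=\;\int_{\{x_n>0\}} x_n^{\,p\al-(n-1)}\,|\nabla\eta|\,dx,
\]
with $\gamma:=p\al-(n-1)>0$. No exhausting family of compactly supported cutoffs can drive this to zero. Indeed, fix the compact slab $\{|x'|\le 1,\ x_n=1\}$; once $\eta_k\ge\tfrac12$ there, for each such $x'$ the function $t\mapsto\eta_k(x',t)$ drops from at least $\tfrac12$ at $t=1$ to $0$ at some finite $t$ (compact support), so $\int_1^\infty|\partial_t\eta_k(x',t)|\,dt\ge\tfrac12$ and hence, since $t^\gamma\ge1$ on $[1,\infty)$,
\[
\int_{\{x_n>0\}} x_n^{\gamma}\,|\nabla\eta_k|\,dx \;\ge\;\int_{|x'|\le1}\int_1^\infty t^{\gamma}\,|\partial_t\eta_k|\,dt\,dx' \;\ge\; \tfrac12\,\omega_{n-1}>0
\]
for all large $k$. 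Interlocking the $x_n$ and $|x'|$ scales cannot beat this: the lower bound uses only a fixed compact slab. The underlying reason is that $V=x_n^{\al}$ blows up towards the ideal point $\xi$ (here $x_n\to\infty$), so $V\notin L^p$ in any neighbourhood of that point, and both $v$ and $c_0V$ carry the singularity at the \emph{same} boundary point; there is nowhere to push the cutoff.

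The paper sidesteps this by never attempting a whole-space comparison. In Step~I it compares $u$ with its reflection $u_\tau$ across an orthogonal sphere $S_R(x_0)$; since $u=u_\tau$ on $S_R(x_0)$ and, from the two-sided bound, $u<u_\tau$ inside the smaller half-ball $H_{R'}(x_0)$, the Picone test functions are supported in the bounded shell $H_R(x_0)\setminus H_{R'}(x_0)$, where all the relevant integrals are finite \emph{without any cutoff at infinity}. This yields $u_\tau\ge u$ in every such half-ball and hence one-dimensional (horospherical) symmetry $u=w(x_n)$. Step~II then classifies the ODE $-t^n(t^{p-n}(w')^{p-1})'=\lambda w^{p-1}$ under the bound $w\asymp t^{\al}$ via the linearized equation for $t w'-\alpha w$ and a scaling/blow-up argument; this is where the rigidity $w(t)=c_0t^{\al}$ is actually proved. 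Your Harnack-based gradient bound and the reduction to the half-space model are correct and are used in the paper as well, but the heart of the argument---localizing the comparison via reflections and then handling the ODE separately---is missing from your plan.
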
 
    
    \begin{proof} It is more convenient to prove the result in the upper half-space model of the Hyperbolic space. Using an orthogonal transformation in $\Bn$ about origin without loss of generality we may assume $\xi = -e_n$. Recall that the extended Mobius map $\Phi :  \overline{\Rn_+} \cup \{\infty\} \rightarrow \overline{\Bn}$ defined by 
    \begin{align} 
    	\Phi(x) := \left(\frac{2x_1}{|x+e_n|^2}, \cdots, \frac{2x_{n-1}}{|x+e_n|^2}, \frac{1 - |x|^2}{|x+e_n|^2} \right) \text{ and } \Phi(\infty) := -e_n 
    \end{align} 
    defines an isometry between the Poincar\'{e} ball model $\Bn$ and the upper half-space model $\Hn$ of the Hyperbolic space. In fact $\Phi$ is the inversion map w.r.t. the sphere centered at $-e_n$ and radius $\sqrt{2}$ given by $\Phi(x) = -e_n + \frac{2(x + e_n)}{|x + e_n|^2}$ and it satisfies $\Phi \circ \Phi = \operatorname{Id}$.
    \\\\ Let, $u := v \circ \Phi$. Then the problem can be equivalently posed in upper half-space model as 
    \begin{align}\label{R2eigen1} 
    	-\Delta_p^{\Hn} u = \lambda u^{p-1} 
    \end{align} 
    and satisfies the bounds 
    \begin{align}\label{R2eigenbound1}
    	x_n^{\alpha_\lambda} \le u(x) \le Cx_n^{\alpha_\lambda}. 
    \end{align}
    We claim that $u(x) = c_0x_n^{\al}$ for some $c_0 \in [1,C]$, from which our claim about the eigenfunction in $\Bn$ model will follow. We start by showing that $u$ has one dimensional symmetry in the $e_n$-direction. This then reduces the problem to showing uniqueness (up to constant multiples) of solutions to an ordinary differential equation.
    \\ \textbf{Step-I:} Note that due to the Hyperbolic isometry invariance and homogeneity of the equation \eqref{R2eigeneq} by applying the standard $C^{1,\sigma}$ estimate and the Harnack inequality we have the global gradient estimate $$|\nabla_g u|_g(x) \le A u(x)$$ for all $x \in \Hn$. 
    \\ For $x_0 = (x_0',0) \in \partial \Rn_+$ and $R > 0$, let us denote the Hyperbolic half-spaces $$H_R(x_0) := \left\{x \in \Rn_+: |x-x_0| < R,\, x_n > 0\right\}$$ and its boundary the orthogonal sphere (orthogonal to the boundary $\partial \mathbb{R}^n_+$) $$S_R(x_0) := \left\{x \in \Rn_+: |x-x_0| = R,\, x_n > 0\right\}.$$ We start by noting that $\lVert \nabla_g u \rVert_{L^p(H_R(x_0))}^p = \int_{H_R(x_0)} |\nabla_g u|_g^p x_n^{-n}\,dx \le (AC)^p\int_{H_R(x_0)} x_n^{p \alpha_\lambda - n} \,dx < +\infty$ since, $\alpha_\lambda > \frac{n-1}{p}$. 
    \\ Let $\tau \in \mathcal{M}(\Hn)$ denote the reflection across the orthogonal sphere $S_R(x_0)$ i.e., $$\tau(x) := x_0 + \frac{R^2(x - x_0)}{|x - x_0|^2}.$$ Then $u_\tau := (u \circ \tau)$ satisfies the equation 
    \begin{align}\label{R2eigen2}
    	-\Delta_p^{\Hn} u_\tau = \lambda u_\tau^{p-1} 
    \end{align} 
    along with the  bound 
    \begin{align}\label{R2eigenreflbound1}
    	\left(\frac{R^2 x_n}{|x - x_0|^2}\right)^{\alpha_\lambda} \le u_\tau(x) \le C\left(\frac{R^2 x_n}{|x - x_0|^2}\right)^{\alpha_\lambda}. 
    \end{align} 
    \\ Comparing \eqref{R2eigenbound1} and \eqref{R2eigenreflbound1} we have $u(x) < u_\tau(x)$ in the set $H_{R'}(x_0)$ where, $R' := RC^{-\frac{1}{2\alpha_\lambda}} < R$. Using the fact that $\tau \in \mathcal{M}(\Hn)$ is an isometry we also have $$\Vert \nabla_g u_\tau \rVert_{L^p(H_R(x_0) \setminus H_{R'}(x_0))} = \Vert \nabla_g u \rVert_{L^p(H_{R''}(x_0) \setminus H_{R}(x_0))} \le \Vert \nabla_g u \rVert_{L^p(H_{R''}(x_0))} < +\infty$$ where, $R'' := R^2/R' = RC^{\frac{1}{2\al}} > R$. 
    \\ We note that the functions $\phi_1 := u^{1-p}(u^p - u_\tau^p)_+\chi_{H_R(x_0)}$ and $\phi_2 := u_\tau^{1-p}(u^p - u_\tau^p)_+\chi_{H_R(x_0)}$ are supported in $H_R(x_0)\setminus H_{R'}(x_0)$, since $u = u_\tau$ on $\partial H_R(x_0)$ and $u < u_{\tau}$ in $H_{R'}(x_0)$. Furthermore, using the bounds on $u$ we have $$u(x) \approx u_\tau(x) \approx \left(\frac{R^2x_n}{|x - x_0|^2}\right)^{\al} \approx x_n^{\al}$$ in $H_R(x_0)\setminus H_{R'}(x_0) = \left\{x \in \Rn_+: R' \le |x - x_0| < R\right\}$. By Harnack's inequality we also have $|\nabla_g u|_g \lesssim u \lesssim x_n^{\al}$ and $|\nabla_g u_\tau|_g \lesssim u_\tau \lesssim x_n^{\al}$ in this set. Then we note that \begin{align*} |\nabla_g \phi_2|_g &= \left|(u^p - u_{\tau}^p)_+ \nabla_g (u_\tau^{1-p}) + u_\tau^{1-p} \nabla_g (u^p - u_{\tau}^p)_+ \right|_g \nn \\& \le (u^p - u_{\tau}^p)_+ \left|\nabla_g (u_\tau^{1-p})|_g + u_\tau^{1-p} |\nabla_g (u^p - u_{\tau}^p)_+ \right|_g \nn \\ &\le (p-1)u_{\tau}^{-p}(u^p - u_{\tau}^p)_+ |\nabla_g u_\tau|_g + pu_\tau^{1-p}u^{p-1}|\nabla_g u|_g + pu_\tau^{1-p}u_\tau^{p-1}|\nabla_g u_\tau|_g \nn \\ & \le \left((p-1)\left(\frac{u}{u_{\tau}}\right)^{p} + p\right) |\nabla_g u_\tau|_g + p\left(\frac{u}{u_{\tau}}\right)^{p-1}|\nabla_g u|_g \lesssim u_\tau + u \lesssim x_n^{\al} \nn \end{align*} 
    in $H_R(x_0)\setminus H_{R'}(x_0)$. Similarly, we have $|\nabla_g \phi_1|_g \lesssim x_n^{\al}$ in $H_R(x_0)\setminus H_{R'}(x_0)$. Therefore, $\phi_j \in \mathcal{D}^{1,p}(\Hn)$ for $j=1,2$ are both admissible test functions.
    \\ We test equations \eqref{R2eigen1} with $\phi_1 = u^{1-p}(u^p - u_\tau^p)_+\chi_{H_R(x_0)}$ and \eqref{R2eigen2} with $\phi_2 = u_\tau^{1-p}(u^p - u_\tau^p)_+\chi_{H_R(x_0)}$, subtracting and combining with Lemma-\ref{R2piconelemma} we get 
    \begin{align}\label{R2eigen3}
    	 C_p\int_{H_R(x_0) \cap \{u \ge u_\tau\}} u_\tau^p \left(|\nabla_g \log u|_g + |\nabla_g \log u_\tau|_g\right)^{p-2} |\nabla_g (\log u - \log u_\tau)|_g\,dV_g \le 0. 
    \end{align} 
    Then, $(\log u - \log u_\tau)$ is a constant in $H_R(x_0) \cap \{u \ge u_\tau\}$. Since, $u = u_\tau$ on $\partial H_R(x_0) \cap \{x_n > 0\}$ we conclude $u = u_\tau$ in $H_R(x_0) \cap \{u \ge u_\tau\}$. Therefore, we have $u_\tau \ge u$ in $H_R(x_0)$. \\ This in particular implies the outward normal derivative of $u$ on the orthogonal sphere $S_R(x_0)$ is non-negative i.e., $\frac{(x - x_0)}{|x - x_0|} \cdot \nabla u(x) \ge 0$ for each $x \in S_R(x_0)$. Fixing the point $x \in \Hn$ we may vary the orthogonal spheres $S_R(x_0)$ passing through $x$ instead where, $x_0 \in \partial \Rn_+$ and $R > 0$. Then let us denote $x = (x',x_n) \in \mathbb{R}^{n-1} \times \mathbb{R}_+$ and for each $\xi \in \mathbb{S}^{n-2}$ consider the inequality on the orthogonal sphere $S_{(x_n^2 + t^2)^{1/2}}((x' - t\xi,0))$ with centre at $(x' - t\xi,0) \in \partial \Rn_+$ and passing through $x$, we get $\frac{(t\xi,x_n)}{(x_n^2 + t^2)^{1/2}}\cdot \nabla u (x) \ge 0$. Letting $t \to +\infty$ we get $(\xi,0) \cdot \nabla u(x) \ge 0$ for all $\xi \in \mathbb{S}^{n-2}$ i.e., $\nabla u = (0',u_{x_n})$ or the horizontal component of gradient of $u$ is zero. Hence, $u(x) := w(x_n)$ is a function of a single variable and looking at $t = 0$ we get $u_{x_n} \ge 0$.
    \\\\ \textbf{Step-II:} As a result the equation \eqref{R2eigen1} reduces to the following ordinary differential equation 
    \begin{align}\label{R2eigen4} 
    	-t^n(t^{p-n} (w')^{p-1})' = \lambda w^{p-1} 
    \end{align} 
    on $\R_+$ where, $w' \ge 0$ and satisfies the global bounds 
    \begin{align} \label{R2eigen4a} 
    	t^{\alpha_\lambda} \le w(t) \le Ct^{\alpha_\lambda}. 
    \end{align} 
    Note that when $\lambda = 0$ we can directly integrate both sides of the ODE and conclude $w(t) = c_0t^{\frac{n-1}{p-1}}$ as $\alpha_0 = \frac{n-1}{p-1}$. Therefore, it remains to prove the result for $\lambda > 0$ i.e., when $\frac{n-1}{p} < \alpha_\lambda < \frac{n-1}{p-1}$. 
    \\ From the estimate $|\nabla_g u|_g \le Au$ we get $tw'(t) \le Aw(t) \le C't^{\alpha_\lambda}$. We have $t^{p-n}(w')^{p-1} \le C''t^{p-n+(p-1)(\alpha_\lambda - 1)} = C''t^{(p-1)\alpha_\lambda - (n-1)} \to 0$ as $t \to \infty$ for $\lambda > 0$. Integrating equation \eqref{R2eigen4} from $t$ to $\infty$ we get 
    \begin{align} 
    	t^{p-n} (w'(t))^{p-1} = -\int_t^\infty (s^{p-n} (w')^{p-1})'\,ds &= \lambda \int_t^\infty w^{p-1}s^{-n}\,ds \nn \\ &\ge \lambda \int_t^\infty s^{(p-1)\alpha_\lambda - n}\,ds \nn \\& = \alpha_\lambda^{p-1} t^{(p-1)\alpha_\lambda - (n-1)} 
    \end{align} i.e., $w'(t) \ge \alpha_\lambda t^{\alpha_\lambda - 1}$. Therefore using the bounds of $w$ we conclude that 
    \begin{align}\label{R2eigen5}
    	0 < \alpha := \inf_{t > 0} \frac{tw'(t)}{w(t)} \le \frac{tw'(t)}{w(t)} \le \sup_{t > 0} \frac{tw'(t)}{w(t)} := \beta < \infty. 
    \end{align} 
    \\ We note from \eqref{R2eqn44} that the linearized equation corresponding to \eqref{R2eigen1} is satisfied by $(x.\nabla u)$ (since, $u(x) = w(x_n)$ has no critical points in $\Hn$) and is given by 
    \begin{align}\label{R2eigen6} & \int_{\Hn} x_n^{p-n}\left(|\nabla u|^{p-2}(\nabla (x.\nabla u), \nabla \varphi) + (p-2)|\nabla u|^{p-4}(\nabla (x.\nabla u), \nabla u)(\nabla u, \nabla \varphi)\right)\,dx \nn \\& = (p-1)\lambda \int_{\Hn} u^{p-2}(x. \nabla u) \varphi x_n^{-n}\,dx, \, \forall \, \varphi \in C_c^\infty(\Hn). 
    \end{align} 
    Since, $u(x) = w(x_n)$ the above PDE reduces to the ordinary differential equation \begin{align}\label{R2eigen7} 
    	-t^n(t^{p-n}(w')^{p-2}(tw')')' = \lambda w^{p-2}(tw'). 
    \end{align}
    \\ Let, $\tilde{w}_1 := tw' - \alpha w$ and $\tilde{w}_2 := \beta w - tw'$, then from \eqref{R2eigen5} these are non-negative functions. Combining \eqref{R2eigen4} and \eqref{R2eigen7} we see that $\tilde{w}_j$ for $j = 1,2$ satisfies 
    \begin{align} \label{R2eigen8} -t^n(t^{p-n}(w')^{p-2}(\tilde{w}_j)')' = \lambda w^{p-2}(\tilde{w}_j). 
    \end{align}
    \\ From the strong minimum-principle (see Lemma-\ref{R2harnack1}) we conclude either $\tilde{w}_j \equiv 0$ in $\R_+$ or has strict inequality $\tilde{w}_j > 0$ in $\R_+$. In the first case clearly we must have $w(t) = c_0t^{\alpha_\lambda}$ keeping in mind \eqref{R2eigen4a}. In the second case neither the infimum or supremum in \eqref{R2eigen5} are attained in interior of $\R_+$. Therefore, without loss of generality let us assume $$\alpha := \inf_{t > 0} \frac{tw'(t)}{w(t)} = \liminf\limits_{t \to 0^+} \frac{tw'(t)}{w(t)}.$$ Let $\{R_k\}_{k \in \mathbb{N}}$ be a sequence in $\R_+$ with $R_k \to 0^+$ such that $\frac{R_kw'(R_k)}{w(R_k)} \to \alpha$. Then consider the sequence of functions $w_k(t) := R_k^{-\al} w(R_k t)$ which satisfy the equation \eqref{R2eigen4} and the global bounds \eqref{R2eigen4a}. Note that $\inf_{t > 0} \frac{tw_k'(t)}{w_k(t)} = \inf_{t > 0} \frac{tw'(t)}{w(t)} = \alpha$ for all $k \in \mathbb{N}$. Applying the standard $C^{1,\alpha}$-estimates \cite{DiBe}, \cite{Tol} we may extract a subsequence (which we still index with $k$) $w_k \to w_\infty$ in $C^{1}(\R_+)$ locally uniformly such that $w_\infty$ satisfies the equation \eqref{R2eigen4}, the global bounds \eqref{R2eigen4a} and $\lim_{k \to \infty} \frac{w_k'(1)}{w_k(1)} = \frac{w_\infty'(1)}{w_\infty(1)} = \alpha$. Also from the $C^1(\R_+)$ local uniform convergence we have $\inf_{t > 0} \frac{t w_\infty'(t)}{w_\infty(t)} \ge \alpha$. Therefore, $\tilde{w}_\infty(t) := tw'_\infty(t) - \alpha w_\infty(t)$ is a non-negative solution of \eqref{R2eigen8} such that $\tilde{w}_\infty(1) = 0$. From the strong minimum-principle (see Lemma-\ref{R2harnack1}) we conclude $\tilde{w}_\infty \equiv 0$ in $\R_+$ i.e., $w_\infty(t) = ct^\alpha$ in $\R_+$ for some $c >0$ (recall $\alpha> 0$ from \eqref{R2eigen5} so that $w_\infty$ has no critical points in $\R_+$). But comparing with the global bounds \eqref{R2eigen4a} of $w_\infty$ we must have $\alpha = \al$. By a similar consideration for the supremum $\beta$ in \eqref{R2eigen5} we also have $\beta = \al$. Therefore, $\alpha = \beta = \alpha_\lambda$, which now implies $w(t) = c_0t^{\alpha_{\lambda}}$ for some $c_0 > 0$. This completes the proof of classification of solutions to the ordinary differential equation. \end{proof}
    
    \begin{remark}\label{R2rmkballmodel} It is also interesting to see the proof of symmetry in Theorem-\ref{R2classeigen} directly in the ball model $\Bn$. Let $H$ denote an open half-space in $\Bn$, i.e. the boundary of $H$ is an orthogonal sphere $\Sigma := \partial H$ that splits $\Bn$ into two isometric components $H$ and $\Bn \setminus \overline H$. Let us denote the Hyperbolic reflection w.r.t. $\Sigma$ as $\sigma \in \mathcal{M}(\Bn)$, then $\Bn \setminus \overline H = \sigma(H)$. We define the ideal-boundary of $\Sigma$ as $\partial_{\infty} \Sigma := \partial_{\infty} \overline{H} \setminus \partial_{\infty} H$, that is the part of the ideal boundary $\partial_{\infty} \Bn$ formed by geodesic rays restricted to $\Sigma$. 
    \\ Now, given $\xi \in \partial_{\infty} \Bn$ as in statement of Theorem-\ref{R2classeigen}, if we denote $u_\sigma = (u \circ \sigma)$, then the weak-comparison argument in the first half of the proof shows that $u_\sigma \ge u$ in $\overline H$, for any half-space $H$ such that $\xi \not\in \partial_{\infty} \overline{H}$. Let, $\Sigma_1$ be an orthogonal sphere with $\xi \in \partial_{\infty} \Sigma_1$, that splits $\Bn$ into the two half-spaces $H_1$ and $H_2 = \Bn \setminus \overline{H_1}$. We denote the reflection w.r.t. $\Sigma_1$ as $\sigma_1 \in \mathcal{M}(\Bn)$. Note that $u_\sigma \ge u$ in all half-spaces $\overline{H} \subset H_1$ such that $\xi \not\in \partial_\infty \overline{H}$. So by continuity of $u$ we have $u_{\sigma_1} \ge u$ in $H_1$. Similarly, we also have $u_{\sigma_1} \ge u$ in $H_2$. Combining this information we get $u = u_{\sigma_1}$ in $\Bn$ for all $\Sigma_1$ such that $\xi \in \partial_\infty \Sigma_1$. Now the set of reflections in orthogonal spheres $\Sigma_1$ with $\xi \in \partial_\infty \Sigma_1$ generate the parabolic subgroup of isometries in $\mathcal{M}(\Bn)$ that fix $\xi \in \partial_\infty \Bn$ (for more details we refer to Section-4.7: Classification of Transformations in \cite{Ratcli}). Hence the level sets of $u$ are horospheres with geometric center $\xi \in \partial_\infty \Bn$, i.e. $u$ has horospherical symmetry.
    \\ Under an isometric map that sends $\Bn$ to the upper half-space model $\Hn$ of the Hyperbolic space such that $\xi \in \partial_{\infty} \Bn$ is mapped to $+\infty \in \partial_{\infty} \Hn$, we note that horospheres with geometric center $\xi$ are mapped to Euclidean hyperplanes $\{x_n = c\}$ in $\Rn_+ ( = \Hn)$. This is indeed why we saw the 1-dimensional symmetry of $u$ in the proof of Theorem-\ref{R2classeigen} in the upper half-space model of the Hyperbolic space.
    	
    \end{remark}

    \begin{remark}\label{R2rmkminimal}
    	For $\lambda \in [0,\lambda_{\text{max}})$, each $\xi \in \partial_\infty \Bn (=\mathbb{S}^{n-1})$ has at least two distinct classes (upto constant multiple) of corresponding positive eigenfunctions solving \eqref{R2eigeneq}, given by \begin{align}
    		E(x,\xi) = \left(\frac{1 - |x|^2}{|x - \xi|^2}\right)^{\alpha_\lambda} \text{ and } F(x,\xi) = \left(\frac{1 - |x|^2}{|x - \xi|^2}\right)^{\beta_\lambda}
    	\end{align}
    	where, $\al$ and $\beta_\lambda$ are as in \eqref{R2root}. The important distinction between them is that $\lVert \nabla_g E(.,\xi) \rVert_{L^p(\overline{H})} < +\infty$ for every Hyperbolic closed half-space $\overline{H}$ of $\Bn$ such that $\xi \not\in \partial_\infty \overline{H}$ i.e., $E(.,\xi)$ has finite energy in each closed half-space $\overline{H}$ which does not contain the `singularity' $\xi$ on its ideal boundary. On the other hand $\lVert \nabla_g F(.,\xi) \rVert_{L^p(\overline{H})} = +\infty$ for all $\overline{H}$. The finiteness of energy of $E(.,\xi)$ in each half-space $\overline{H}$ away from the `singularity' at $\xi$ plays a crucial role in the moving plane argument in proof of Theorem-\ref{R2classeigen}.
    	\\\\ We only state an extract of the representation theorem in the linear case $p = 2$ due to Helgason \cite{Helgason} and Minemura \cite{Minemura} relevant to our particular case. We denote the normalized measure on $\mathbb{S}^{n-1}$ with $d\sigma$, the space of all analytic functions on $\mathbb{S}^{n-1}$ by $C^\omega(\mathbb{S}^{n-1})$ and its dual the space of analytic functionals (hyperfunctions) by $C^{-\omega}(\mathbb{S}^{n-1})$. Let us denote by $\mathcal{L}_\lambda := -\Delta^{\Bn} - \lambda$.
        \begin{theorem}\label{R2eigenclass1}
        	The Poisson-Helgason transform $P_\lambda: C^{-\omega}(\mathbb{S}^{n-1}) \to C^\omega(\Bn)$ defined by 
        	\begin{align}\label{R2eigenclass2}
        		P_\lambda(T)(x) := \int_{\mathbb{S}^{n-1}} E(x,\xi)T(\xi)\,d\sigma(\xi), \text{ for } T \in C^{-\omega}(\mathbb{S}^{n-1})
        	\end{align}
        	is an isomorphism between $C^{-\omega}(\mathbb{S}^{n-1})$ and $\operatorname{Ker}\left(\mathcal{L}_\lambda\right)$, with inverse $B_\lambda$ given by a generalized boundary value. For $v \in \operatorname{Ker}\left(\mathcal{L}_\lambda\right)$ the hyperfunction $B_\lambda(v)$ is given by 
        	\begin{align}\label{R2eigenclass3}
        		\left<B_\lambda(v), \varphi \right> := c_{n,\lambda} \lim\limits_{r \to 1^{-}} (1 - r^2)^{\alpha_\lambda - (n-1)} \int_{\mathbb{S}^{n-1}} v(r\xi) \varphi(\xi)\,d\sigma(\xi), \text{ for } \varphi \in C^\omega(\mathbb{S}^{n-1})
        	\end{align}
        	where, $c_{n,\lambda} = \frac{\pi^{\frac{1}{2}} \Gamma\left(\alpha_\lambda\right)}{\Gamma\left(\alpha_\lambda - \frac{n-1}{2}\right)}$.
        \end{theorem}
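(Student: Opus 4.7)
The plan is to split the argument into three parts: first, verify that $P_\lambda$ is well-defined and maps $C^{-\omega}(\mathbb{S}^{n-1})$ into $\operatorname{Ker}(\mathcal{L}_\lambda)$; second, show $B_\lambda \circ P_\lambda = \operatorname{Id}$ on $C^{-\omega}(\mathbb{S}^{n-1})$, yielding injectivity of $P_\lambda$; third, establish surjectivity by showing every analytic eigenfunction admits a well-defined hyperfunction boundary value via \eqref{R2eigenclass3}. For the first part, a direct computation using the expression for $\Delta^{\Bn}$ from Section-\ref{R2prelim} gives $\mathcal{L}_\lambda E(\cdot, \xi) \equiv 0$ for each fixed $\xi \in \mathbb{S}^{n-1}$; the key algebraic identity, which fixes the exponent $\al$, is the $p=2$ specialization of \eqref{R2auxf}, namely $\al(n-1-\al) = \lambda$. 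Since $\xi \mapsto E(x,\xi)$ is real-analytic on $\mathbb{S}^{n-1}$ for each $x \in \Bn$, being a negative power of a strictly positive polynomial in $\xi$, the pairing $\langle T, E(x,\cdot)\rangle$ defines an analytic function of $x$ for every hyperfunction $T$; differentiating under the pairing then gives $\mathcal{L}_\lambda P_\lambda T \equiv 0$.

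For injectivity I would test the boundary value formula against an arbitrary analytic $\varphi \in C^\omega(\mathbb{S}^{n-1})$ applied to $v = P_\lambda T$, reducing matters to the asymptotic analysis of $(1-r^2)^{\al - (n-1)}\int_{\mathbb{S}^{n-1}}\langle T, E(r\xi',\cdot)\rangle\,\varphi(\xi')\,d\sigma(\xi')$ as $r \to 1^-$. Expanding both $E(r\xi',\xi)$ and $\varphi$ in spherical harmonics on $\mathbb{S}^{n-1}$ reduces the limit to controlling the asymptotics of the radial Gegenbauer coefficients of $E(r\xi',\xi)$, which is standard via hypergeometric identities: the leading term is of order $(1-r^2)^{(n-1)-\al}$, cancels the prefactor, and the resulting limit produces the constant $c_{n,\lambda}$ through a beta-integral of the form $\int_0^1 (1-s)^{\al - 1} s^{(n-1)/2 - 1}\,ds$. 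This gives $B_\lambda P_\lambda T = T$ as hyperfunctions against arbitrary analytic test functions.

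The main obstacle, as expected, is surjectivity: showing every $v \in \operatorname{Ker}(\mathcal{L}_\lambda)$ admits a hyperfunction boundary value. My plan is to decompose $v(r\xi) = \sum_k a_k(r) Y_k(\xi)$ into spherical harmonics, so that each $a_k$ solves a second-order Fuchsian ordinary differential equation at the regular singular point $r = 1$, with characteristic exponents $\al$ and $\beta_\lambda = (n-1)-\al$ from \eqref{R2root}; analyticity of $v$ at the origin fixes $a_k$ up to a single scalar $T_k$. The delicate point is to control the growth of $|T_k|$ in $k$ finely enough to ensure that $T := \sum_k T_k Y_k$ defines an analytic functional on $\mathbb{S}^{n-1}$ rather than merely a formal series; this estimate, the technical heart of the theorem, requires uniform control of the hypergeometric asymptotics of $a_k$ in the degree $k$ and is essentially the content of the Kashiwara-Kowata-Minemura-Okamoto-Oshima-Tanaka resolution of the Helgason conjecture specialized to the real hyperbolic case of rank one. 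Once $B_\lambda v$ is so constructed, combining with the previous paragraph shows $P_\lambda B_\lambda v$ and $v$ share the same boundary value, and uniqueness in the Fuchsian ODE at each degree $k$ forces $P_\lambda B_\lambda v = v$, giving the required inverse isomorphism.
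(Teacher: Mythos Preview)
The paper does not prove this theorem. It is stated inside Remark~\ref{R2rmkminimal} explicitly as ``an extract of the representation theorem in the linear case $p=2$ due to Helgason and Minemura,'' with citations to \cite{Helgason}, \cite{Minemura}, and \cite{kami} for the general Helgason conjecture; the authors use it as a black box to deduce minimality of $E(\cdot,\xi)$ when $p=2$. So there is no proof in the paper to compare your proposal against.

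That said, your outline is the classical route taken in those references: verify $E(\cdot,\xi)$ is an eigenfunction, expand in spherical harmonics so the radial coefficients satisfy a hypergeometric ODE with exponents $\al$ and $(n-1)-\al$ at $r=1$, compute $B_\lambda P_\lambda$ by the leading asymptotics, and for surjectivity control the growth of the spherical-harmonic coefficients $T_k$ in the degree $k$. You correctly flag the last step as the real content and attribute it to \cite{kami}; since you do not carry out that estimate yourself, what you have is an accurate roadmap rather than a self-contained proof. For the purposes of this paper that is exactly how the result is treated, so your proposal is adequate as a pointer to the literature, but you should not present it as an independent argument unless you actually supply the uniform-in-$k$ hypergeometric bounds.
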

        \, \\ For the sake of completeness we include a short proof of the minimality of $E(.,\xi)$ relative to $\xi \in \mathbb{S}^{n-1}$ in the linear case $p = 2$. Let us consider the following weighted analogue of a Hardy Space 
        \begin{align}
        	\mathrm{H}^1_{\lambda}(\Bn) := \left\{f \in L^1_{\text{loc}}(\Bn): \sup_{r \in [0,1)} (1 - r^2)^{\alpha_\lambda - (n-1)} \int_{\mathbb{S}^{n-1}} |f(r\xi)|\,d\sigma(\xi) < +\infty \right\}.
        \end{align}
        We note that if $v \in \mathrm{H}^1_{\lambda}(\Bn) \cap \operatorname{Ker}\left(\mathcal{L}_\lambda\right)$, then the set of Borel measures $$d\mu_{r}(\xi) := c_{n,\lambda} (1 - r^2)^{\alpha_\lambda - (n-1)} v(r\xi)\,d\sigma(\xi) \in M(\mathbb{S}^{n-1})$$ for $r \in [0,1)$ has uniformly bounded total variation norm. Extracting a $w^\ast$-convergent subsequence $\mu_{r_j} \overset{w^\ast}{\rightharpoonup} \mu$ as $r_j \to 1^{-}$ we note that in \eqref{R2eigenclass3} we have 
        \begin{align}\label{R2trace1}
        	\left<B_\lambda(v), \varphi \right> = \int_{\mathbb{S}^{n-1}} \varphi \,d\mu, \, \forall \, \varphi \in C^\omega(\mathbb{S}^{n-1}).
        \end{align}
        Thus \eqref{R2trace1} extends by density to all $\varphi \in C(\mathbb{S}^{n-1})$ and hence $B_\lambda(v) = \mu \in M(\mathbb{S}^{n-1})$ is given by a Borel measure. This establishes an isomorphism of $\mathrm{H}^1_{\lambda}(\Bn) \cap \operatorname{Ker}\left(\mathcal{L}_\lambda\right)$ and $M(\mathbb{S}^{n-1})$. In particular we note that positive functions in $\mathrm{H}^1_{\lambda}(\Bn) \cap \operatorname{Ker}\left(\mathcal{L}_\lambda\right)$ corresponds to non-negative measures in $M(\mathbb{S}^{n-1})$. However we note that if $v \in \operatorname{Ker}\left(\mathcal{L}_\lambda\right)$ is a positive eigenfunction then by the mean-value equality in Hyperbolic space (see \cite{stoll}, Theorem-5.5.5) we have 
        \begin{align}
        	\int_{\mathbb{S}^{n-1}} v(r\xi) \,d\sigma(\xi) = g_{\alpha_\lambda}(r\eta)v(0)
        \end{align}
        where, $r \in [0,1), \eta \in \mathbb{S}^{n-1}$ and $g_{\alpha_\lambda} \in \operatorname{Ker}\left(\mathcal{L}_\lambda\right)$ is the unique bounded radial eigenfunction with $g_{\al}(0) = 1$ given by 
        \begin{align}
        	g_{\alpha_\lambda}(x) := \int_{\mathbb{S}^{n-1}} E(x,\eta') \,d\sigma(\eta')
        \end{align}
        with asymptotic boundary behavior (see \cite{stoll}, Theorem-5.5.7 and Corollary-5.5.8) given by 
        \begin{align}
        	g_{\alpha_\lambda}(r\eta) \approx (1 - r^2)^{n-1 - \alpha_\lambda}.
        \end{align}
        This in particular implies that $\lVert v \rVert_{\mathrm{H}^1_{\lambda}(\Bn)} = \sup_{r \in [0,1)} (1 - r^2)^{\alpha_\lambda - (n-1)} \int_{\mathbb{S}^{n-1}} v(r\xi) \,d\sigma(\xi) \le C'v(0)$, i.e.,  positive eigenfunctions are in $\mathrm{H}^1_{\lambda}(\Bn)$ and hence have non-negative measure for a generalized boundary value. As a consequence if $v \in \operatorname{Ker}\left(\mathcal{L}_\lambda\right)$ is a positive eigenfunction satisfying the  estimate from above
        \begin{align}
        	0 \le v(x) \le CE(x,\xi)
        \end{align}
        for some $\xi \in \mathbb{S}^{n-1}$, then $$\left<B_\lambda(v), \varphi \right> = c_{n,\lambda} \lim\limits_{r \to 1^{-}} (1 - r^2)^{\alpha_\lambda - (n-1)} \int_{\mathbb{S}^{n-1}} v(r\eta) \varphi(\eta)\,d\sigma(\eta) = 0$$ for all $\varphi \in C(\mathbb{S}^{n-1})$ with $\operatorname{supp}(\varphi)\cap \{\xi\} = \emptyset$. Thus $B_\lambda(v)$ is a non-negative measure supported at $\xi \in \mathbb{S}^{n-1}$ and hence must be a dirac mass, $B_\lambda(v) = c_0\delta_{\xi}$ for some $c_0 \ge 0$. Therefore, $v = P_\lambda(c_0\delta_{\xi}) = c_0 E(.,\xi).$
    \end{remark}
	 
	\section{Proof of symmetry} \label{R2symmetry}
	
	We will use the Alexandrov Moving Plane method to prove symmetry of positive energy solutions to \eqref{R21eqn1} in the Poincar\'{e} ball model $(\Bn, g)$ of the Hyperbolic space. Recall that the Hyperbolic reflections are isometries of the Hyperbolic space $\Bn$ given by spherical inversions (or reflections) with respect to generalized Euclidean spheres that are orthogonal to $\mathbb{S}^{n-1}$. In particular these are either orthogonal Euclidean spheres $S(a,r) := \{x: |x - a| = r\}$ where $|a| > 1$ and $r^2 = |a|^2 - 1$ or are Euclidean hyperplanes passing through the origin of $\Bn$. 
	\\ Given a direction $\xi \in \mathbb{S}^{n-1}$ we perform the moving plane method in direction $\xi$ by reflecting along the family of orthogonal spheres parametrized by $S\left(\mu^{-1}\xi, \sqrt{|\mu|^{-2} - 1}\right)$ for $\mu \in (-1,1)\setminus \{0\}$. The limiting case $\mu = 0$ corresponds to reflecting along the Euclidean hyperplane passing through $0 \in \Bn$ and orthogonal to $\xi$. When $\mu \neq 0$ we denote the reflection across $S\left(\mu^{-1}\xi, \sqrt{|\mu|^{-2} - 1}\right)$ by $\sigma_{\mu,\xi} \in \mathcal{M}(\Bn)$, 
	\begin{align}\label{R2reflection1} 
		\sigma_{\mu,\xi}(x) := \mu^{-1}\xi + \frac{\left(|\mu|^{-2} - 1\right)\left(x - \mu^{-1}\xi\right)}{|x - \mu^{-1}\xi|^2} 
	\end{align} 
    and when $\mu = 0$ we denote 
    \begin{align}\label{R2reflection2} 
    	\sigma_{0,\xi}(x) := x - 2(x ,\xi) \xi. 
    \end{align}
	\\ Thus $S\left(\mu^{-1}\xi, \sqrt{|\mu|^{-2} - 1}\right)$ separates $\Bn$ into two isometric connected components (half-spaces). We denote by $\Sigma_\mu(\xi)$ the half-space in $\Bn$ containing $\xi$ in its boundary shared with $\partial_\infty \Bn$ ($ = \mathbb{S}^{n-1}$). We also denote the part of the sphere $S\left(\mu^{-1}\xi, \sqrt{|\mu|^{-2} - 1}\right)$ in $\Bn$ as $T_\mu(\xi) := S\left(\mu^{-1}\xi, \sqrt{|\mu|^{-2} - 1}\right) \cap \Bn$. In this notation we have $\sigma_{\mu,\xi}\left(\Sigma_\mu(\xi)\right) = \Bn \setminus \overline{\Sigma_\mu(\xi)} = \Sigma_{-\mu}(-\xi)$. 
	\\\\ Before proceeding with the proof of symmetry we need the following simple lemma comparing the image and object distances of a reflection from an observer in Hyperbolic space. The proof is similar to that of the same fact in the Euclidean space.
	\begin{lemma}\label{hrefdist}
		Let $T$ be a generalized orthogonal sphere in $\Bn$ which splits the space into two isometric components (half-spaces) $\Sigma$ and $\Bn \setminus \overline{\Sigma}$. Let $E$ (observer) be a fixed point in $\overline \Sigma$. Let $P$ (object) be a point in $\Sigma$ and we denote its reflection in $T$ as $\sigma_T(P) = P'$ (image). Then $\dist_{\Bn}(P,E) \le \dist_{\Bn}(P',E)$ with equality if and only if $E$ lies on $T$.
	\end{lemma}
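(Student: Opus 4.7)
The plan is to exploit the fact that the reflecting hypersurface $T$ is a totally geodesic submanifold of $\Bn$, so that each of the two half-spaces it bounds is geodesically convex. First I would observe that $T$ is the fixed-point set of the involutive hyperbolic isometry $\sigma_T \in \mathcal{M}(\Bn)$; hence $T$ is totally geodesic and both $\overline{\Sigma}$ and $\overline{\Bn \setminus \Sigma}$ are geodesically convex. Since $P \in \Sigma$, its image $P' = \sigma_T(P)$ lies in $\Bn \setminus \overline{\Sigma}$, so the minimising hyperbolic geodesic from $E \in \overline{\Sigma}$ to $P'$ must meet $T$ at some point $Q$.

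Once $Q$ is fixed, the key calculation is to split the length along this geodesic as
\[ \dist_{\Bn}(E,P') = \dist_{\Bn}(E,Q) + \dist_{\Bn}(Q,P'), \]
and then use $\sigma_T(Q) = Q$ together with the isometry property of $\sigma_T$ to rewrite $\dist_{\Bn}(Q,P') = \dist_{\Bn}(\sigma_T(Q), \sigma_T(P)) = \dist_{\Bn}(Q,P)$. The triangle inequality applied to the hyperbolic triangle with vertices $P, Q, E$ then yields
\[ \dist_{\Bn}(P,E) \le \dist_{\Bn}(P,Q) + \dist_{\Bn}(Q,E) = \dist_{\Bn}(P',Q) + \dist_{\Bn}(Q,E) = \dist_{\Bn}(P',E), \]
which is the desired inequality.

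For the equality case, if $E \in T$ then $\sigma_T(E) = E$, and so $\dist_{\Bn}(P,E) = \dist_{\Bn}(\sigma_T(P), \sigma_T(E)) = \dist_{\Bn}(P',E)$ immediately. Conversely, if $E \in \Sigma$ strictly, then by the geodesic convexity of $\overline{\Sigma}$ the minimising geodesic from $P \in \Sigma$ to $E \in \Sigma$ stays inside $\Sigma$; since neither endpoint lies on $T$ and $T$ meets $\overline\Sigma$ only along itself, the geodesic does not meet $T$, so no $Q \in T$ can lie on it and the triangle inequality above is strict. I do not foresee any serious obstacle: the only mild subtlety is in justifying the geodesic convexity of $\overline{\Sigma}$, which follows from the standard fact that the fixed set of an involutive hyperbolic isometry is totally geodesic.
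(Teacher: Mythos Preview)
Your argument is correct and takes a genuinely different route from the paper. The paper normalizes by an isometry sending the midpoint $O$ of $PP'$ to the origin, so that $T$ becomes a Euclidean hyperplane; it then drops the hyperbolic perpendicular from $E$ to the line $\ell$ through $P,P'$, calls the foot $F$, and applies the hyperbolic Pythagoras theorem to the right triangles $\triangle PFE$ and $\triangle P'FE$ to reduce the comparison to the one-dimensional inequality $\dist_{\Bn}(P,F)\le\dist_{\Bn}(P',F)$ on $\ell$. Your approach is more synthetic: you use only that $T$, as the fixed set of an involutive isometry, is totally geodesic, intersect the geodesic $EP'$ with $T$ at a point $Q$, and apply the triangle inequality after the reflection identity $\dist_{\Bn}(Q,P')=\dist_{\Bn}(Q,P)$. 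This avoids hyperbolic trigonometry entirely and would carry over verbatim to any uniquely geodesic space admitting a reflection across a totally geodesic hypersurface; the paper's proof, by contrast, is more computational and tied to the specific trigonometric identities of $\Hn$. The only point where you might tighten the exposition is the strictness when $E\in\Sigma$: it suffices to note that a geodesic in $\Bn$ and a totally geodesic hypersurface either meet transversally in a single point, are disjoint, or the geodesic lies inside the hypersurface, which immediately forces the $PE$ geodesic to miss $T$.
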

	\begin{proof}
		The proof can be seen as a corollary of the Pythagoras' theorem (or more generally the law of cosines in Hyperbolic space, see Theorem-3.5.3 in \cite{Ratcli}). Let $\ell$ denote the Hyperbolic line passing through $P$ and $P'$ which intersects $T$ at $O$. Then clearly $\dist_{\Bn}(P,O) = \dist_{\Bn}(P',O)$ i.e., $O$ is the mid-point of $P$ and $P'$ on the line $\ell$. It is easier to visualize it after applying a Hyperbolic isometry to bring $O$ to the origin of $\Bn$. Then in this new frame, $T$ is a Euclidean hyperplane passing through $O$ splitting $\Bn$ into two half-spaces $\Sigma$ and $\Bn\setminus \overline{\Sigma}$. $\ell$ is an Euclidean line passing through $O$ and orthogonal to $T$. Let $F$ be the orthogonal projection of $E$ on the line $\ell$ (i.e., $F$ is intersection of $\ell$ with the unique Hyperbolic line $\ell_1$ through $E$ which is orthogonal to $\ell$). Then applying Hyperbolic Pythagoras' theorem to the triangles $\Delta PFE$ and $\Delta P'FE$ respectively we get
		\begin{align*}
			\cosh(\dist_{\Bn}(P,E)) &= \cosh(\dist_{\Bn}(P,F))\cosh(\dist_{\Bn}(E,F)) \\ 
			\cosh(\dist_{\Bn}(P',E)) &= \cosh(\dist_{\Bn}(P',F))\cosh(\dist_{\Bn}(E,F))
		\end{align*}
		respectively. Therefore the inequality $\dist_{\Bn}(P,E) \le \dist_{\Bn}(P',E)$ will follow if we see that $\dist_{\Bn}(P,F) \le \dist_{\Bn}(P',F)$. Note that the orthogonal projection $F$ of the point $E$ on the line $\ell$ must lie on the same half-space $\overline{\Sigma}$ as $E$ (the unique Hyperbolic line $\ell_1$ passing through $E \in \overline{\Sigma}$ which is orthogonal to $\ell$ must be entirely contained in $\overline{\Sigma}$). Since, $\dist_{\Bn}(P',F) = \dist_{\Bn}(P',O) + \dist_{\Bn}(O,F)$ (as $O = T \cap \ell$ lies in between the points $P' \in \Bn\setminus \overline{\Sigma}$ and $F \in \overline{\Sigma}$ on the line $\ell$) and $\dist_{\Bn}(P',O) = \dist_{\Bn}(P,O)$, using triangle inequality we get $$\dist_{\Bn}(P',F) = \dist_{\Bn}(P,O) + \dist_{\Bn}(O,F) \ge \dist_{\Bn}(P,F)$$ and consequently $\dist_{\Bn}(P,E) \le \dist_{\Bn}(P',E)$. Equality holds if and only if $F = O$ i.e., $E$ lies on $T$. This completes the proof of the claim. 
	\end{proof}
	\, \\ We now proceed with the proof of Theorem-\ref{R2hypsymmetry}.
	\begin{proof}[Proof of Theorem-\ref{R2hypsymmetry}:] Without loss of generality we fix the direction $\xi = e_1$ and denote $\Sigma_\mu(e_1)$ simply as $\Sigma_\mu$ and $T_\mu(e_1)$ as $T_\mu$. For $x \in \Bn$ denote $x_\mu := \sigma_{\mu,e_1}(x)$ and $u_\mu(x) := u\circ \sigma_{\mu,e_1}(x) = u(x_\mu)$, where $u \in \mathcal{D}^{1,p}(\Bn)$ is a positive solution to \eqref{R21eqn1} i.e., 
	\begin{align}\label{R2eqnu} 
		-\Delta_p^{\Bn} u - \lambda u^{p-1} = u^{q-1}. 
	\end{align} 
    Since $\sigma_{\mu,e_1} \in \mathcal{M}(\Bn)$ we have $u_\mu$ is also a positive solution of \eqref{R21eqn1} i.e., 
    \begin{align}\label{R2eqnumu} 
    	-\Delta_p^{\Bn} u_\mu - \lambda u_\mu^{p-1} = u_\mu^{q-1} 
    \end{align} 
    and $u_\mu \in \mathcal{D}^{1,p}(\Bn)$. 
	\\\\ \textbf{Step-I:} To start the Moving plane method we will show that $u \le u_\mu$ in $\Sigma_\mu$ for all $\mu$ sufficiently close to $1$.
	\\ Let $\varphi \in C_c^\infty(\Bn)$ be a non-negative test function such that $0 \le \varphi \le 1$ with $\varphi \equiv 1$ in $B_R$ and $\text{supp}(\varphi) \subset B_{R+1}$, $|\nabla_g \varphi|_g \le c$ for all $R$ sufficiently large.
	\\ Consider the test functions $\psi_1 := \varphi^2 u^{1-p}(u^p - u_\mu^p)_{+} \, \chi_{\Sigma_\mu}$ and $\psi_2 := \varphi^2 u_\mu^{1-p}(u^p - u_\mu^p)_{+} \, \chi_{\Sigma_\mu}$. Note that the functions are admissible as test functions since $u = u_\mu$ on $T_\mu$, therefore testing \eqref{R2eqnu} with $\psi_1$ and \eqref{R2eqnumu} with $\psi_2$ respectively and subtracting we have 
	\begin{align}\label{R23eqn1} 
		& \int_{\Sigma_\mu \cap \{u \ge u_\mu\}} |\nabla_g u|_g^{p-2}g\left(\nabla_g u, \nabla_g \psi_1\right) - |\nabla_g u_\mu|_g^{p-2}g\left(\nabla_g u_\mu, \nabla_g \psi_2\right) \,dV_g \nn \\ &= \int_{\Sigma_\mu \cap \{u \ge u_\mu\}} \varphi^2 (u^{q-p} - u_\mu^{q-p})(u^p - u_\mu^p) \,dV_g. 
	\end{align} 
	\\ In the set $\Sigma_\mu \cap \{u \ge u_\mu\}$ using the sharp decay estimate on $u$ we also have the inequality 
	\begin{align}\label{R23eqn2}
		1 \le \frac{u}{u_\mu} \le C'e^{-\alpha_{\lambda}(\dist_{\Bn}(0,x) - \dist_{\Bn}(0,x_\mu))} \le C'
	\end{align} 
    where the final inequality is a consequence of Lemma-\ref{hrefdist}, i.e., $\dist_{\Bn}(0,x_\mu) \le \dist_{\Bn}(0,x)$ for all $x \in \Sigma_\mu$ and $\mu \ge 0$ and hence is true in particular for all $\mu$ close to $1$ (here we have applied Lemma-\ref{hrefdist} with $E = 0$, $P = x_\mu$ and $P' = x$).
	\\ Note that in $\Sigma_\mu \cap \{u \ge u_\mu\}$ using Lagrange's Mean Value theorem we have 
	\begin{align*} 
		(u^{q-p} - u_\mu^{q-p})(u^p - u_\mu^p) &\le u^{-p}\left(\frac{u^q - u_\mu^q}{u - u_\mu}\right)\left(\frac{u^p - u_\mu^p}{u - u_\mu}\right)(u - u_\mu)^2 \\ & \le qp \, u^{q-2}(u - u_\mu)^2.
	\end{align*} 
    Furthermore using the inequality $\frac{b-a}{b} \le \log b - \log a \le \frac{b-a}{a}$ for $b \ge a > 0$ we may write 
    \begin{align*} 
    	(u - u_\mu)^2 \le u^2(\log u - \log u_\mu)^2. 
    \end{align*}
	Combining these two inequalities we have 
	\begin{align*} 
		(u^{q-p} - u_\mu^{q-p})(u^p - u_\mu^p) \le qp \, u^q (\log u - \log u_\mu)^2 
	\end{align*} 
    which we use on RHS of \eqref{R23eqn1} to write 
	\begin{align} \label{R23eqn3} 
		& \int_{\Sigma_\mu \cap \{u \ge u_\mu\}} |\nabla_g u|_g^{p-2}g\left(\nabla_g u, \nabla_g \psi_1\right) - |\nabla_g u_\mu|_g^{p-2}g\left(\nabla_g u_\mu, \nabla_g \psi_2\right) \,dV_g \nn \\ &\le qp \int_{\Sigma_\mu \cap \{u \ge u_\mu\}} \varphi^2 u^q (\log u - \log u_\mu)^2 \,dV_g. 
	\end{align}
	On the other hand from the LHS of \eqref{R23eqn3} we have 
	\begin{align}\label{R23eqn4}
		&\int_{\Sigma_\mu \cap \{u \ge u_\mu\}} \varphi^2 \left(|\nabla_g u|_g^{p-2}g\left(\nabla_g u, \nabla_g \left(u - \frac{u_\mu^p}{u^p}u\right)\right) + |\nabla_g u_\mu|_g^{p-2}g\left(\nabla_g u_\mu, \nabla_g  \left(u_\mu - \frac{u^p}{u_\mu^p}u_\mu\right)\right)\right) \,dV_g \nn \\ &\le 2 \underbrace{\int_{\Sigma_\mu \cap \{u \ge u_\mu\}} \varphi |\nabla_g u|_g^{p-1}|\nabla_g \varphi|_gu^{1-p}(u^p - u_\mu^p) \,dV_g}_{ = I_1} \nn \\ & \quad + 2 \underbrace{\int_{\Sigma_\mu \cap \{u \ge u_\mu\}} \varphi |\nabla_g u_\mu|_g^{p-1}|\nabla_g \varphi|_gu_\mu^{1-p}(u^p - u_\mu^p) \,dV_g }_{ = I_2} \nn \\& \quad + qp \int_{\Sigma_\mu \cap \{u \ge u_\mu\}} \varphi^2 u^q (\log u - \log u_\mu)^2 \,dV_g
	\end{align}
    In order to estimate $I_1$ and $I_2$ we may use the global gradient estimate from above in Theorem-\ref{R2gradestimate} for $u$ to write $|\nabla_g u| \le Au$, $|\nabla_g u_\mu| \le Au_\mu$ and the fact that $|\nabla_g \varphi|_g \le c$ with $\text{supp}(|\nabla_g \varphi|_g) \subset B_{R+1}\setminus B_R$ to write 
    \begin{align}\label{R23eqn5}
    	I_1 + I_2 \le 4c A^{p-1} \int_{B_{R+1}\setminus B_R} u^p \,dV_g \le Ce^{-(p\alpha_\lambda - (n-1))R} \to 0^+
    \end{align}
    as $R \to \infty$ since, $p\alpha_\lambda > n-1$. Then using the inequality from Lemma-\ref{R2piconelemma} to LHS of \eqref{R23eqn4} we have
    \begin{align}\label{R23eqn6}
    	& C_p \int_{\Sigma_\mu \cap \{u \ge u_\mu\}} \varphi^2 \min\left\{u^p, u_\mu^p\right\}\left(|\nabla_g \log u|_g + |\nabla_g \log u_\mu|_g\right)^{p-2}|\nabla_g (\log u - \log u_\mu)|_g^2 \,dV_g \nn \\ & \le I_1 + I_2 + qp \int_{\Sigma_\mu \cap \{u \ge u_\mu\}} \varphi^2 u^q (\log u - \log u_\mu)^2 \,dV_g.
    \end{align}
    Now in case $p > 2$ we may use the gradient estimate from below on $u$ along with the global gradient estimate from above for both $u$ and $u_\mu$ from Theorem-\ref{R2gradestimate} to write 
    \begin{align}\label{R2g1} 
    	(2A)^{p-2} \ge \left(|\nabla_g \log u|_g + |\nabla_g \log u_\mu|_g\right)^{p-2} \ge |\nabla_g \log u|_g^{p-2} \ge A^{2-p} 
    \end{align} 
    in case $\dist_{\Bn}(0,\Sigma_\mu) > R_0$ which is true for all $\mu$ sufficiently close to $1$. Else if $p < 2$ then we have 
    \begin{align}\label{R2g2}
    	A^{2 - p} \ge |\nabla_g \log u|_g^{p-2} \ge \left(|\nabla_g \log u|_g + |\nabla_g \log u_\mu|_g\right)^{p-2} \ge (2A)^{p-2}.\end{align} 
    Combining these along with inequality \eqref{R23eqn2}, i.e., $\frac{u}{C'} \le u_\mu = \min\{u,u_\mu\} \le u$ in $\Sigma_{\mu} \cap \{u \ge u_\mu\}$, on the LHS of \eqref{R23eqn6} we have
    \begin{align}\label{R23eqn7a} 
    	C_{p,A}' \int_{\Sigma_\mu \cap \{u \ge u_\mu\}} \varphi^2 u^p|\nabla_g (\log u - \log u_\mu)|_g^2 \,dV_g \le I_1 + I_2 + qp \int_{\Sigma_\mu \cap \{u \ge u_\mu\}} \varphi^2 u^q (\log u - \log u_\mu)^2 \,dV_g.
    \end{align}
    Therefore using Cauchy-Schwartz inequality we have
    \begin{align}\label{R23eqn7}
    	& C_{p,A}' \int_{\Sigma_\mu \cap \{u \ge u_\mu\}} u^p|\nabla_g (\varphi (\log u - \log u_\mu))|_g^2 \,dV_g \nn \\& \le 2I_1 + 2I_2 + 2qp \int_{\Sigma_\mu \cap \{u \ge u_\mu\}} u^q \varphi^2(\log u - \log u_\mu)^2 \,dV_g \nn \\ & \quad + 2C_{p,A}' \underbrace{\int_{\Sigma_\mu \cap \{u \ge u_\mu\}} |\nabla_g \varphi|_g^2 u^p (\log u - \log u_\mu)^2 \,dV_g}_{ = I_3}.
    \end{align}
    Recall also from \eqref{R23eqn2} we have the inequality $0 \le \log u - \log u_\mu \le \frac{u - u_\mu}{u_\mu} \le (C' - 1)$ in $\Sigma_\mu \cap \{u \ge u_\mu\}$. So we may estimate $I_3$ similar to $I_1, I_2$ as before
    \begin{align}\label{R23eqn7c} 
        I_3 \le 2C_{p,A}'(C' - 1)^2c^2\int_{B_{R+1} \setminus B_R} u^p \,dV_g \to 0^+
    \end{align}
    as $R \to \infty$.
    Therefore using the sharp decay estimate of $u$ in \eqref{R23eqn7} we have
    \begin{align}\label{R23eqn8}
    	& \int_{\Sigma_\mu \cap \{u \ge u_\mu\}} e^{-p\alpha_\lambda \dist_{\Bn}(0,x)} |\nabla_g (\varphi(\log u - \log u_\mu))|_g^2 \,dV_g \nn \\ &\le C'' e^{-(q-p)\dist_{\Bn}(0,\Sigma_\mu)}\int_{\Sigma_\mu \cap \{u \ge u_\mu\}} e^{-p\alpha_\lambda \dist_{\Bn}(0,x)} \varphi^2(\log u - \log u_\mu)^2 \,dV_g \nn \\ & \quad + 2I_1 + 2I_2 + I_3.
    \end{align}
    Recall from Lemma-\ref{R2lemmasubsol} we have $w(x) := \left[\sinh\left(\frac{1}{2}\dist_{\Bn}(0,x)\right)\right]^{-2(n-1)}$ is a subsolution to $-\Delta_g w \le 0$ in $\Bn \setminus B_{r_0}$ for some $r_0 = r_0(n) > 0$. Furthermore $\frac{|\nabla_g w|_g}{w} \ge c_n$ in $\Bn \setminus B_{r_0}$ for some $c_n > 0$. Since $\alpha := \frac{p\alpha_\lambda}{n-1} > 1$ we may apply the Hardy type inequality from \cite{DD}, Theorem-3.1 to write 
    \begin{align}\label{R2hardy1}
    	\left(\frac{|1-\alpha|}{2}\right)^2\int_{\Bn \setminus B_{r_0}} w^\alpha \frac{|\nabla_g w|_g^2}{w^2} v^2 \,dV_g \le \int_{\Bn \setminus B_{r_0}} w^\alpha |\nabla_g v|_g^2\,dV_g, \, \forall \, v \in C_c^\infty(\Bn \setminus B_{r_0}).
    \end{align}
    This combined with the observation $$w(x) \approx_{n,r_0} e^{-(n-1)\dist_{\Bn}(0,x)} \text{ in } \Bn \setminus B_{r_0}$$ gives the inequality
    \begin{align}\label{R2hardy2}
    	C_{n,\alpha_\lambda} \int_{\Bn} e^{-p\alpha_\lambda \dist_{\Bn}(0,x)} v^2 \,dV_g \le \int_{\Bn} e^{-p\alpha_\lambda \dist_{\Bn}(0,x)} |\nabla_g v|_g^2 \,dV_g, \, \forall \, v \in C_c^\infty(\Bn \setminus B_{r_0}).
    \end{align}
    We remark that the above inequality \eqref{R2hardy2} remains valid for all functions supported away from $B_{r_0}$ and with right decay property at infinity.
    \\ Then using inequality \eqref{R2hardy2} on RHS of \eqref{R23eqn8} with $v := \varphi(\log u - \log u_\mu)_+$ we get
    \begin{align}\label{R23eqn9}
    	& \left(1 - C'''e^{-(q-p)\alpha_{\lambda}\dist_{\Bn}(0,\Sigma_\mu)}\right) \int_{\Sigma_\mu \cap \{u \ge u_\mu\}} e^{-p\alpha_\lambda \dist_{\Bn}(0,x)} |\nabla_g (\varphi(\log u - \log u_\mu))|_g^2 \,dV_g \nn \\& \le \quad 2I_1 + 2I_2 + I_3.
    \end{align}
    Finally letting $R \to \infty$ from \eqref{R23eqn5} and \eqref{R23eqn7c} we get $2I_1 + 2I_2 + I_3 \to 0^+$, i.e.,
    \begin{align}\label{R23eqn10}
    	& \left(1 - C'''e^{-(q-p)\alpha_{\lambda}\dist_{\Bn}(0,\Sigma_\mu)}\right) \int_{\Sigma_\mu \cap \{u \ge u_\mu\}} e^{-p\alpha_\lambda \dist_{\Bn}(0,x)} |\nabla_g (\log u - \log u_\mu)|_g^2 \,dV_g \le 0.
    \end{align}
	Therefore $\nabla_g (\log u - \log u_\mu) \equiv 0$ in $\Sigma_\mu \cap \{u \ge u_\mu\}$ i.e., $u = u_\mu$ in $\Sigma_\mu \cap \{u \ge u_\mu\}$ for all $\mu$ sufficiently close to $1$. This completes the proof of the claim $u \le u_\mu$ in $\Sigma_\mu$ for all $\mu$ close to $1$.
	\\\\ \textbf{Step-II:} Let us denote $$M_u := \left\{\mu \in (-1,1) : u \le u_{\tilde{\mu}} \text{ in } \Sigma_{\tilde{\mu}}, \, \forall \, \mu \le \tilde{\mu} < 1 \right\}.$$ By the previous step $M_u \neq \emptyset$ and let $\mu_0 := \inf M_u$. To keep consistent with the previous step, wlog we may assume $\mu_0 > 0$, this may be achieved by making a change in coordinate frame by a Hyperbolic translation $\tau_{-te_1} \in \mathcal{M}(\Bn)$ (which fixes the points $-e_1$ and $e_1$) for $t > 0$ chosen sufficiently large if necessary and working with $v := (u \circ \tau_{-te_1})$ instead so that $\mu_0' =\inf M_v > 0$. 
	\\\\ We claim that $u \equiv u_{\mu_0}$ in $\Sigma_{\mu_0}$. Suppose to the contrary $u \not\equiv u_{\mu_0}$ in $\Sigma_{\mu_0}$, we will reach a contradiction on infimality of $\mu_0$ by showing that there is a $\overline{\epsilon} > 0$ such that $u \le u_{\mu_0 - \epsilon}$ in $\Sigma_{\mu_0 - \epsilon}$ for all $0 < \epsilon \le \overline{\epsilon}$. \\\\ The following argument is made exactly as in \cite{OSV}. Note that we have $u \le u_{\mu_0}$ in $\Sigma_{\mu_0}$ by continuity. Then by strong comparison principle Lemma-\ref{R2strongcomparison1}, we have either $u \equiv u_{\mu_0}$ or $u < u_{\mu_0}$ in each connected component of $\Sigma_{\mu_0} \setminus \C_u$, where, $\C_u$ is the critical point set of $u$. Note that by the gradient estimate of $u$ from below we know $\C_u$ is a compact set and furthermore from Lemma-\ref{R2negint1} we have $|\C_u| = 0$ ($\C_u$ is a set of Lebesgue measure zero). 
	\\ If $\Sigma_{\mu_0} \setminus \C_u$ is connected then by our assumption $u \not\equiv u_{\mu_0}$ in $\Sigma_{\mu_0}$ by using strong comparison in $\Sigma_{\mu_0} \setminus \C_u$ we have $u < u_{\mu_0}$ in $\Sigma_{\mu_0} \setminus \C_u$.
	\\ Else if $\Sigma_{\mu_0} \setminus \C_u$ has at least two connected components then compactness of $\C_u$ implies there is exactly one unbounded component which we denote by $\Omega_1$. Suppose, $u \equiv u_{\mu_0}$ in $\Omega_1$ then it is easy to see that $(\Sigma_{\mu_0} \setminus \Omega_1) \cup \sigma_{\mu_0}(\Sigma_{\mu_0} \setminus \Omega_1)$ contains a bounded connected component of $\Bn \setminus \C_u$ which we denote by $\omega$. Note that $\partial \omega \subset \C_u$. Recall from the regularity of $u$ from Remark-\ref{R2rmkreg} we have $|\nabla u|^{p-2}\nabla u \in W^{1,2}_{\text{loc}}(\Bn)$ so that this vector field is continuous and equals $0$ on $\partial \omega$. Since the equality $-\Delta_p^{\Bn} u = f(u)$ with $f(u) = \lambda u^{p-1} + u^{q-1}$ holds pointwise a.e. in $\omega$ we may integrate the expression in $\omega$ to write $$\int_{\omega} -\divg \left(\left(\frac{2}{1 - |x|^2}\right)^{n-p}|\nabla u|^{p-2}\nabla u \right) \,dx = \int_{\omega} f(u) \left(\frac{2}{1 - |x|^2}\right)^n \,dx.$$ Applying divergence theorem we have $$0 = -\int_{\partial \omega} \left(\frac{2}{1 - |x|^2}\right)^{n-p}|\nabla u|^{p-2}(\nabla u, \hat{n}) \,d\mathcal{H}^{n-1}  = \int_{\omega} f(u) \left(\frac{2}{1 - |x|^2}\right)^n \,dx > 0$$ which leads to a contradiction as $f(u) > 0$ in $\Bn$. Else if $u \equiv u_{\mu_0}$ in a bounded connected component $\Omega_2$ of $\Sigma_{\mu_0} \setminus \C_u$ then again $\Omega_2 \cup \sigma_{\mu_0}(\Omega_2)$ must contain a bounded connected component of $\Bn \setminus \C_u$ which is absurd as seen earlier. Thus by appealing to strong comparison we must have $u < u_{\mu_0}$ in each component of $\Sigma_{\mu_0} \setminus \C_u$ as well. 
	\\\\ Let us consider $\overline{R} > 0$ to be chosen sufficiently large later such that $\C_u \subset \subset B_{\overline{R}}$. Furthermore using the fact that $|\mathcal{C}_u| = 0$ (see Lemma-\ref{R2negint1}), we let $N := N_{\kappa}(\C_u)$ be a open neighborhood of $\C_u$ such that $|N| < \kappa$ has small measure. Then for $\overline{\epsilon}, \epsilon, \delta, \kappa > 0$ let us define the sets
	\begin{align}\label{R23eqn11}
		& \Sigma_{\mu_0-\epsilon}^{\overline{R}} := \Sigma_{\mu_0 - \epsilon} \setminus B_{\overline{R}}, \nn \\
		& S_\delta^\epsilon(\overline{R}) := \left((\Sigma_{\mu_0 - \epsilon}\setminus \Sigma_{\mu_0 + \delta}) \cap B_{\overline{R}}\right) \cup (N \cap \Sigma_{\mu_0 + \delta}), \nn \\ 
		& K_{\delta}(\overline{R}) := \overline{\Sigma_{\mu_0 + \delta} \cap B_{\overline{R}}} \cap (\Bn \setminus N)
	\end{align}
    so that we have 
    \begin{align} \label{R23eqn12}
    	\Sigma_{\mu_0 - \epsilon} = \Sigma_{\mu_0-\epsilon}^{\overline{R}} \cup S_\delta^\epsilon(\overline{R}) \cup K_{\delta}(\overline{R}).
    \end{align}
	We choose $\epsilon$ small enough such that $0 \not\in \Sigma_{\mu_0 - \epsilon}$ (i.e., $\Sigma_{\mu_0 - \epsilon}$ still remains to the right of the origin in $\Bn$) and $\delta$ small enough such that $K_\delta$ is non-empty. Since, $K_\delta(\overline{R}+1) := \overline{\Sigma_{\mu_0 + \delta} \cap B_{\overline{R} + 1}} \cap (\Bn \setminus N)$ is compact, by uniform continuity of $u$ and $u_\mu$ we have $u < u_{\mu_0 - \epsilon}$ in $K_\delta(\overline{R}+1)$ for each $\epsilon \le \overline{\epsilon}$ for some $\overline{\epsilon} > 0$.
	\\ Then as in the last step for $R > \overline{R}$, let $\varphi \in C_c^\infty(\Bn)$ be a non-negative test function such that $0 \le \varphi \le 1$ with $\varphi \equiv 1$ in $B_R$ and $\text{supp}(\varphi) \subset B_{R+1}$, $|\nabla_g \varphi|_g \le c$. We consider the test functions $\psi_1 := \varphi^2 u^{1-p}\left(u^p - u_{\mu_0 - \epsilon}^p\right)_+ \, \chi_{\Sigma_{\mu_0 - \epsilon}}$ and $\psi_2 := \varphi^2 u_{\mu_0 - \epsilon}^{1-p}\left(u^p - u_{\mu_0 - \epsilon}^p\right)_+ \, \chi_{\Sigma_{\mu_0 - \epsilon}}$ as before. Testing the PDE for $u$ with $\psi_1$ and the PDE for $u_{\mu_0 - \epsilon}$ with $\psi_2$, subtracting them and following the steps from Step-I we get
	\begin{align}\label{R23eqn13}
		& C_p \int_{\Sigma_{\mu_0 - \epsilon} \cap \{u \ge u_{\mu_0 - \epsilon}\}} \varphi^2 u^p \left(|\nabla_g \log u|_g + |\nabla_g \log u_{\mu_0 - \epsilon}|_g\right)^{p-2}|\nabla_g (\log u - \log u_{\mu_0 - \epsilon})|_g^2 \,dV_g \nn \\ 
		& \le Ce^{-(p\alpha_{\lambda} - (n-1))R} \nn \\ 
		& \quad + qp \int_{\Sigma_{\mu_0 - \epsilon}^{\overline{R}} \cap \{u \ge u_{\mu_0 - \epsilon}\}} \varphi^2 u^q (\log u - \log u_{\mu_0 - \epsilon})^2 \,dV_g \nn \\
		& \quad + qp \int_{S_\delta^\epsilon(\overline{R}) \cap \{u \ge u_{\mu_0 - \epsilon}\}} \varphi^2 u^q (\log u - \log u_{\mu_0 - \epsilon})^2 \,dV_g \nn \\ 
		& \le Ce^{-(p\alpha_{\lambda} - (n-1))R} \nn \\ 
		& \quad + 2qp \int_{\Sigma_{\mu_0 - \epsilon}^{\overline{R}} \cap \{u \ge u_{\mu_0 - \epsilon}\}} \varphi^2 u^q \eta^2(\log u - \log u_{\mu_0 - \epsilon})^2 \,dV_g \nn \\
		& \quad + 2qp \int_{S_\delta^\epsilon(\overline{R}+1) \cap \{u \ge u_{\mu_0 - \epsilon}\}} \varphi^2 u^q (1 - \eta)^2(\log u - \log u_{\mu_0 - \epsilon})^2 \,dV_g \nn \\ 
		& = Ce^{-(p\alpha_{\lambda} - (n-1))R} + J_1 + J_2
	\end{align}
    where, $\eta \in C^\infty(\Bn)$ is such that $0 \le \eta \le 1$ with $\eta \equiv 1$ in $\Bn \setminus B_{\overline{R} + 1}$ and $\eta \equiv 0$ in $B_{\overline{R}}$.
    \\ Let, 
    \begin{align}\label{R23eqn13a}
    	 J_0 := C_p \int_{\Sigma_{\mu_0 - \epsilon} \cap \{u \ge u_{\mu_0 - \epsilon}\}} u^p \left(|\nabla_g \log u|_g + |\nabla_g \log u_{\mu_0 - \epsilon}|_g\right)^{p-2}|\nabla_g (\log u - \log u_{\mu_0 - \epsilon})|_g^2 \,dV_g,
    \end{align}
    we claim that $J_0 \le 0$. Suppose to the contrary $J_0 > 0$.
    \\ Note that we have the gradient estimates \eqref{R2g1} and \eqref{R2g2} in $\Sigma_{\mu_0 - \epsilon}^{\overline{R}}$. Using the decay estimate of $u$ and the inequality \eqref{R2hardy2} followed by the gradient estimates of $u$ we may estimate $J_1$ as 
    \begin{align}\label{R23eqn14} 
    	J_1 &\le Ce^{-(q-p)\alpha_\lambda\overline{R}} \int_{\Sigma_{\mu_0 - \epsilon}^{\overline{R}} \cap \{u \ge u_{\mu_0 - \epsilon}\}} u^p \left(|\nabla_g \log u|_g + |\nabla_g \log u_{\mu_0 - \epsilon}|_g\right)^{p-2}|\nabla_g (\eta(\log u - \log u_{\mu_0 - \epsilon}))|_g^2 \,dV_g \nn 
    	\\ &\le 2Ce^{-(q-p)\alpha_\lambda\overline{R}} \int_{\Sigma_{\mu_0 - \epsilon}^{\overline{R}} \cap \{u \ge u_{\mu_0 - \epsilon}\}} u^p \left(|\nabla_g \log u|_g + |\nabla_g \log u_{\mu_0 - \epsilon}|_g\right)^{p-2}|\nabla_g (\log u - \log u_{\mu_0 - \epsilon})|_g^2 \,dV_g \nn \\
    	& \quad + 2C_Ae^{-(q-p)\alpha_\lambda\overline{R}} \int_{B_{\overline{R}+1} \setminus B_{\overline{R}}} u^p \,dV_g.
    \end{align}
    In the second line we have used the Cauchy-Schwartz inequality followed by the fact that $|\nabla_g \eta|_g \le c$ and is supported in $B_{\overline{R}+1} \setminus B_{\overline{R}}$ and $(\log u - \log u_{\mu_0 - \epsilon})_+$ is bounded from above in $\Sigma_{\mu_0-\epsilon}$.
    \\ Again in order to estimate $J_2$ we split it into two cases. In case $p > 2$ we may use the admissibility of $|\nabla_g u|_g^{p-2}$ as a weight for a 2-Poincar\'{e} inequality as stated in Remark-\ref{R2rmkweightedsobolev2} to write 
    \begin{align}\label{R23eqn15}
    	J_2 &\le C_u \int_{S_\delta^\epsilon(\overline{R} + 1) \cap \{u \ge u_{\mu_0 - \epsilon}\}} (1 - \eta)^2(\log u - \log u_{\mu_0 - \epsilon})^2 \,dV_g \nn 
    	\\ &\le C_u \Lambda(S_\delta^\epsilon(\overline{R} + 1)) \int_{S_\delta^\epsilon(\overline{R} + 1) \cap \{u \ge u_{\mu_0 - \epsilon}\}} |\nabla_g u|_g^{p-2} |\nabla_g ((1 - \eta)(\log u - \log u_{\mu_0 - \epsilon}))|_g^2\,dV_g \nn
    	\\ & \le 2C_u \Lambda(S_\delta^\epsilon(\overline{R} + 1)) \int_{S_\delta^\epsilon(\overline{R} + 1) \cap \{u \ge u_{\mu_0 - \epsilon}\}} |\nabla_g u|_g^{p-2} |\nabla_g (\log u - \log u_{\mu_0 - \epsilon})|_g^2\,dV_g \nn
    	\\ & \quad + 8C_u \Lambda(S_\delta^\epsilon(\overline{R} + 1)) \int_{S_\delta^\epsilon(\overline{R} + 1) \cap \{u \ge u_{\mu_0 - \epsilon}\}} |\nabla_g u|_g^{p-2} |\nabla_g \eta|_g^2(\log u - \log u_{\mu_0 - \epsilon})^2\,dV_g \nn 
    	\\ & := J_2' + J_2''.
    \end{align}
    where, $C_u = 2qp\left(\sup_{S_\delta^\epsilon} u^q\right)$ and $\Lambda(S_\delta^\epsilon(\overline{R} + 1))$ is the constant for the weighted 2-Poincar\'{e} inequality. Since $p > 2$ we have 
    \begin{align}\label{R23eqn16}
    	& J_2' \le 2C_u' \Lambda(S_\delta^\epsilon(\overline{R} + 1)) \int_{S_\delta^\epsilon(\overline{R} + 1) \cap \{u \ge u_{\mu_0 - \epsilon}\}} u^p|\nabla_g \log u|_g^{p-2} |\nabla_g (\log u - \log u_{\mu_0 - \epsilon})|_g^2\,dV_g \nn \\ & \le 2C_u' \Lambda(S_\delta^\epsilon(\overline{R} + 1)) \int_{S_\delta^\epsilon(\overline{R} + 1) \cap \{u \ge u_{\mu_0 - \epsilon}\}} u^p\left(|\nabla_g \log u|_g + |\nabla_g \log u_{\mu_0 - \epsilon}|_g\right)^{p-2} |\nabla_g (\log u - \log u_{\mu_0 - \epsilon})|_g^2\,dV_g
    \end{align}
    where, $C_u' = 2qp \left(\frac{\sup_{S_\delta^\epsilon} u^q}{\inf_{S_\delta^\epsilon} u^2}\right)$. 
    \\ Also, using the sharp decay, global gradient estimate from above of $u$ along with the fact that $|\nabla_g \eta|_g \le c$ and is supported in $B_{\overline{R}+1} \setminus B_{\overline{R}}$, followed by the fact that $(\log u - \log u_{\mu_0 - \epsilon})_+$ is bounded from above in the set, we have
    \begin{align}\label{R23eqn16a}
    	J_2'' \le C'' C_u \Lambda(S_\delta^\epsilon(\overline{R} + 1)) e^{-(p-2)\alpha_{\lambda}\overline{R}}\operatorname{Vol}_g(S_\delta^\epsilon(\overline{R}+1) \cap B_{\overline{R}+1} \setminus B_{\overline{R}}).
    \end{align}
    \\ Note that the constant $C_u'$ does not increase if we let $\epsilon, \delta \to 0^+$, thus remains uniformly bounded for all $\epsilon \le \overline{\epsilon}$ and $\delta < \overline{\delta}$. Furthermore from \eqref{R2weightedpoincare1} we have the Poincar\'{e} constant $\Lambda(S_\delta^\epsilon(\overline{R} + 1)) \to 0^+$ as $|S_\delta^\epsilon(\overline{R} + 1)| \to 0^+$.
    \\ In case $p < 2$ similar to the last case we may start by using the usual unweighted Poincar\'{e} inequality instead and the global gradient bound $$\left(|\nabla_g \log u|_g + |\nabla_g \log u_{\mu_0 - \epsilon}|_g\right)^{2-p} \le (2A)^{2-p}$$ to write 
    \begin{align}\label{R23eqn17}
    	J_2 &\le 2C_u \Lambda(S_\delta^\epsilon(\overline{R} + 1)) \int_{S_\delta^\epsilon \cap \{u \ge u_{\mu_0 - \epsilon}\}} |\nabla_g (\log u - \log u_{\mu_0 - \epsilon})|_g^2\,dV_g \nn 
    	\\ & \quad + 8C'''C_u \Lambda(S_\delta^\epsilon(\overline{R}+1)) \operatorname{Vol}_g(S_\delta^\epsilon(\overline{R}+1) \cap B_{\overline{R} + 1} \setminus B_{\overline{R}}) \nn 
    	\\& \le  2C_u' \Lambda(S_\delta^\epsilon(\overline{R} + 1)) \int_{S_\delta^\epsilon \cap \{u \ge u_{\mu_0 - \epsilon}\}} u^p\left(|\nabla_g \log u|_g + |\nabla_g \log u_{\mu_0 - \epsilon}|_g\right)^{p-2} |\nabla_g (\log u - \log u_{\mu_0 - \epsilon})|_g^2\,dV_g \nn 
    	\\ & \quad + 8C'''C_u \Lambda(S_\delta^\epsilon(\overline{R}+1)) \operatorname{Vol}_g(S_\delta^\epsilon(\overline{R}+1) \cap B_{\overline{R} + 1} \setminus B_{\overline{R}}).
    \end{align}
    where, the constant $C_u' = (2A)^{2-p}qp \left(\frac{\sup_{S_\delta^\epsilon} u^q}{\inf_{S_\delta^\epsilon} u^p}\right)$ does not increase as we let $\epsilon, \delta \to 0+$ and hence remains uniformly bounded as in the previous case.
    \\ Thus letting $R \to \infty$ in \eqref{R23eqn13} and combining with the fact that with $\overline{R}$ chosen large enough we have $J_1 \le J_0/4$ and in the estimates for $J_2$ we can choose $\epsilon \le \overline{\epsilon}$, $\delta$ and $\kappa$ small enough s.t., $2C_u'\Lambda(S_\delta^\epsilon(\overline{R} + 1)) < 1/4$, $J_2'' < J_0/4$ we get
    \begin{align}\label{R23eqn18}
    	J_0/4 = \frac{C_p}{4} \int_{\Sigma_{\mu_0 - \epsilon} \cap \{u \ge u_{\mu_0 - \epsilon}\}} u^p \left(|\nabla_g \log u|_g + |\nabla_g \log u_{\mu_0 - \epsilon}|_g\right)^{p-2}|\nabla_g (\log u - \log u_{\mu_0 - \epsilon})|_g^2 \,dV_g \le 0
    \end{align}
    which leads to a contradiction. Thus we must have $J_0 \le 0$ to begin with.
    \\ This in turn implies as in Step-I, $u \le u_{\mu_0 - \epsilon}$ in $\Sigma_{\mu_0 - \epsilon}$ for each $\epsilon \le \overline{\epsilon}$. But this contradicts the minimality of $\mu_0$, completing the proof.
    \\\\ \textbf{Step-III:} Finally applying another Hyperbolic translation $\tau_{te_1} \in \mathcal{M}(\Bn)$ for some $t \in \mathbb{R}$ we may choose a coordinate frame s.t., $\mu_0 = 0$. Thus we have $u \equiv (u \circ \sigma_{0,e_1}$) in $\Sigma_{0}(e_1)$. Similarly repeating this in $e_j$-directions for $j = 2, \cdots, n$ we also have $u \equiv (u \circ \sigma_{0,e_j}$) in $\Sigma_{0}(e_j)$. Owing to the fact that $u$ is non-decreasing in direction of the moving plane we must have $u(0) = \sup_{\Bn} u = M$ (say). Since, $u$ is symmetric about the $n$ orthogonal planes passing through $0 \in \Bn$, it must also have point symmetry about $0$, meaning $u(x) = u(-x)$ by successively reflecting along these $n$ planes. We claim that for any direction $\overline{e} \in \mathbb{S}^{n-1}$, the moving plane in the direction of $\overline{e}$ must stop only after it reaches the origin $0 \in \Bn$. Otherwise, if we have $u \equiv (u \circ \sigma_{\mu, \overline{e}})$ in $\Sigma_{\mu}(\overline{e})$ for some $\mu \neq 0$, i.e., $T_{\mu}(\overline{e})$ stops at an optimal position before reaching $0$, by reflecting $0$ about $T_\mu(\overline{e})$ we must have $u \equiv M$ along the line segment joining $0$ and its reflection $O_1 = \sigma_{\mu,\overline{e}}(0)$. Again due to the point symmetry about $0$ we must have $u(-O_1) = M$. By reflecting $-O_1$ about $T_\mu(\overline{e})$ we have $u \equiv M$ on the line segment joining $-O_1$ and its reflection $O_2 = \sigma_{\mu, \overline{e}}(-O_1)$. If we denote $\dist_{\Bn}(0,O_1) = 2\ell$, since reflections are isometries, successively iterating this process with $O_k := \sigma_{\mu, \overline{e}}(-O_{k-1})$ we get $\dist_{\Bn}(0,O_k) = 2k\ell$. Therefore, $u \equiv M$ along the entire line passing through $0$ in direction $\overline{e}$, which contradicts the decay of $u$. Therefore, $u \equiv (u \circ \sigma_{0, \overline{e}})$ for any $\overline{e} \in \mathbb{S}^{n-1}$, which implies $u$ is a radially symmetric function in $\Bn$ with respect to the origin $0$. To see that $u$ is strictly decreasing in radial direction (i.e., $\mathcal{C}_u = \{0\}$), we may either look at the ordinary differential equation satisfied by $u$ or argue as in beginning of Step-II to conclude that $\Bn \setminus \mathcal{C}_u$ cannot have a bounded connected component since, level sets of $u$ are spheres. This completes the proof of Hyperbolic symmetry of $u$. Finally we note that if we denote $u(x) = \Phi(\dist_{\Bn}(0,x))$ then $\Phi'(t) < 0$ for all $t > 0$ and by standard regularity argument $\Phi(t)$ is smooth for $t > 0$. 
    \end{proof}
	
	\section{Strong comparison for Hyperbolic $p$-Laplace equation}\label{R2strongcopmarison}
	In this section we establish the Sobolev regularity of $|\nabla_g u|_g^{p-2}\nabla_g u$ and local integrability of $|\nabla_g u|_g^{-(p-1)r}$ for $r < 1$, of positive solutions $u \in \mathcal{D}^{1,p}(\Hn) \cap C_{\text{loc}}^{1,\alpha}(\Hn)$ of the equation \begin{align}\label{R2eqn41} 
		-\Delta_p^{\Hn} u &= f(u) 
	\end{align} 
    where, $f$ is a positive locally Lipschitz function such that $f(t) > 0$ for $t > 0$ and the set of critical points of $u$ is compact. These estimates play a crucial role in proving a strong comparison theorem when $p > 2$. We will proceed in similar lines of \cite{DS1}, \cite{DS2}. We remark that these results have also been proved in a more general homogeneous compact Riemannian manifolds with boundary in case $p > 2$ in \cite{ADG2}. In the prior sections we saw that when $f(t) = \lambda t^{p-1} + t^{q-1}$ for $p < q \le p^\ast$ and $\displaystyle 0 \le \lambda < \lambda_{\text{max}} :=  \left(\frac{n-1}{p}\right)^p$, the positive finite energy solutions have non-vanishing gradient in complement of a large ball. For the sake of simplicity of calculations we will be using the upper half-space model $(\Hn, g)$ of the Hyperbolic space.
	\\\\ We begin by recalling that the equation \eqref{R2eqn41} is invariant under the action of the Mobius transformations $\mathcal{M}(\Hn)$ (isometries of $\Hn$). Let, $\tau \in \mathcal{M}(\Hn)$ then $u_\tau := (u \circ \tau) \in \mathcal{D}^{1,p}(\Hn)$ also solves \begin{align}\label{R2eqn42} 
		-\Delta_p^{\Hn} u_\tau = f(u_\tau).
	\end{align}
	\\ We will linearize the equation \eqref{R2eqn41} along the flows of the Hyperbolic translations. Note that for $t > 0$, $\tau_t \in \mathcal{M}(\Hn)$ given by $\tau_t(x) := tx$ is a translation in the upper half-space model of the Hyperbolic space that fixes $0$ and $\infty$. Note that by classical regularity theory $u \in C^{2,\sigma}$ in $\Hn \setminus \C_u$ where, $\C_u := \{x \in \Hn: \nabla_g u (x) = 0\}$ is the critical point set. Let $\varphi \in C_c^\infty(\Hn)$ be such that $\text{supp}(\varphi) \subset \Hn \setminus \C_u$. Testing equation \eqref{R2eqn42} with $\varphi$ and differentiating at $t = 1$ we have 
	\begin{align}\label{R2eqn43} 
		\left.\frac{d}{dt}\right\vert_{t = 1} \int_{\Hn} |\nabla_g (u \circ \tau_t)|_g^{p-2} g(\nabla_g (u \circ \tau_t),\nabla_g \varphi) \,dV_g = \left.\frac{d}{dt}\right\vert_{t = 1} \int_{\Hn} f(u \circ \tau_t) \varphi \,dV_g. \end{align}
	\\ Note that we have successively for each $j = 1, \cdots, n$ 
	\begin{align}\label{R2id1} 
		& \left.\frac{d}{dt}\right\vert_{t = 1} (u \circ \tau_t) = (x,\nabla u) := V_n(u). \\ \label{R2id2} & \frac{\partial (u \circ \tau_t)}{\partial x_j} = tu_{x_j}(tx) \\ \label{R2id3} & \left.\frac{d}{dt}\right\vert_{t = 1} \frac{\partial (u \circ \tau_t)}{\partial x_j} = u_{x_j} + \left(x,\nabla u_{x_j}\right) = \frac{\partial V_n(u)}{\partial x_j} \\ \label{R2id4} & \left.\frac{d}{dt}\right\vert_{t = 1} |\nabla (u \circ \tau_t)|^2 = 2(\nabla u, \nabla V_n(u)). 
	\end{align}
	\\ Therefore, \eqref{R2eqn43} becomes 
	\begin{align}\label{R2eqn44} 
		& \int_{\Hn} x_n^{p-n}\left(|\nabla u|^{p-2}(\nabla V_n(u), \nabla \varphi) + (p-2)|\nabla u|^{p-4}(\nabla V_n(u), \nabla u)(\nabla u, \nabla \varphi)\right)\,dx \nn \\& =  \int_{\Hn} f'(u) V_n(u) \varphi x_n^{-n}\,dx, \, \forall \, \varphi \in C_c^\infty(\Hn \setminus \C_u). 
	\end{align}
	\\ Keeping consistent with the notations in \cite{DS1}, we define the linearized PDE corresponding to \eqref{R2eqn41} to be, $L_u(v,\varphi) = 0$ for $\varphi \in C_c^\infty(\Hn)$ where 
	\begin{align} \label{R2eqn45} 
		L_u(v,\varphi) &:= \int_{\Hn} |\nabla_g u|_g^{p-2}g(\nabla_g v, \nabla_g \varphi) + (p-2)|\nabla_g u|_g^{p-4}g(\nabla_g v, \nabla_g u)g(\nabla_g u, \nabla_g \varphi)\,dV_g \nn \\ & \quad - \int_{\Hn} f'(u) v \varphi \,dV_g. 
	\end{align} 
	\\ Similarly considering horizontal translations as well in upper half-space  (Euclidean translations $x \mapsto x + te_j$ in $e_j$-direction for $j=1, \cdots, n-1$, which are also isometries in the upper half-space model of the Hyperbolic space) we have 
	\begin{align}\label{R2eqn46}
		 L_u(V_j(u), \varphi) = 0 
	\end{align} 
    $\forall \, \varphi \in C_c^\infty(\Hn \setminus \C_u)$ where, 
    \begin{align} \label{R2vf1} 
    	V_j(u) &:= u_{x_j} \text{ for } j = 1, \cdots , n-1 \nn \\ V_n(u) &= (x,\nabla u).
    \end{align} 
	By density argument the above equation remains valid $\varphi\in W^{1,2}_{\text{loc}}(\Hn)$ with compact support in $\Hn \setminus \C_u$. We note that the vector field $V(u) := (V_1(u), \cdots, V_n(u))^T$ is given by 
	\begin{align}\label{R2eqn47} 
		V(u) = A(x)\nabla u
	\end{align} 
    where, $\nabla u = (u_{x_1}, \cdots, u_{x_n})^T$, $A(x) := [e_1, \cdots, e_{n-1}, x]^T$ (where, $e_1, e_2, \cdots, e_n, x$ are the column vectors of $[e_1, \cdots, e_{n-1}, x]$) is an invertible matrix with its inverse $A(x)^{-1} = [e_1, \cdots , e_{n-1},\overline{x}]^T$ where $\overline{x} = \left(-\frac{x_1}{x_n}, \cdots , -\frac{x_{n-1}}{x_n}, \frac{1}{x_n}\right)^T$. We also have $\lVert A(x)^{-1} \rVert_2^{-1} |\nabla u| \le |V(u)| \le \lVert A(x) \rVert_2 |\nabla u|$ where, $\lVert A(x) \rVert_2^2 = n-1 + |x|^2$ and $\lVert A(x)^{-1} \rVert_2^2 = n-1 + \frac{|x|^2 - x_n^2 + 1}{x_n^2}$. \\ Keeping the linearized equation \eqref{R2eqn46} in mind we proceed to state the regularity results for $V(u)$. The proofs are verbatim adaptations from \cite{DS1} and \cite{DS2}, some of which we present here and summarize the rest for the sake of completeness. See Theorem-2.2 in \cite{DS1} for the corresponding Euclidean analogue of the following lemma.
    \begin{lemma}\label{R2W12reg} 
    	Let, $1 < p < \infty$ and $u \in \mathcal{D}^{1,p}(\Hn) \cap C^{1,\sigma}_{\text{loc}}(\Hn)$ be a solution of \eqref{R2eqn41} such that the critical point set $\C_u := \left\{x \in \Hn: \nabla_g u = 0\right\}$ is compact and contained in a bounded domain $\Omega \subset \Hn$. Let us denote by $Z_k := \left\{x \in \Hn : V_k(u)(x) = 0\right\}$ for each $k = 1, 2, \cdots, n$. Then for each compact set $E$ such that $\C_u \subset \subset E \subset \subset \Omega$ we have 
    \begin{align}
    	\label{R2W12regeq1} \sup_{x \in \Omega} \int_{E \setminus Z_k} \frac{|\nabla_g u|_g^{p-2} \, |\nabla_g V_k(u)|_g^2}{|V_k(u)|^\beta \, \dist_{\Hn}(x,y)^\gamma} \,dV_g(y) < C, \text{ for } k = 1,2, \cdots, n 
    \end{align} 
    for some positive constant $C = C(u, \beta, \gamma, E)$ (depending on $u$) where, $\beta, \gamma$ are non-negative real numbers such that $0 \le \beta < 1$ and $0 \le \gamma < n-2$ when $n \ge 3$ ($\gamma = 0$ when $n=2$). This in turn implies \begin{align}\label{R2W12regeq2} 
    	\sup_{x \in \Omega} \int_{E \setminus \C_u} \frac{|\nabla_g u|_g^{p-2-\beta} \, |\nabla_g^2 u|_g^2}{\dist_{\Hn}(x,y)^\gamma} \,dV_g(y) < C. 
    \end{align}
    \end{lemma}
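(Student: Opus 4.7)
The plan is to follow the strategy of Damascelli--Sciunzi \cite{DS1,DS2} for the Euclidean $p$-Laplacian, adapted to the upper half-space model by exploiting the linearized equation $L_u(V_k(u),\varphi)=0$ derived in \eqref{R2eqn46}. The key structural observation is that each $V_k(u)$ vanishes on $\C_u$ (since $V(u)=A(x)\nabla u$ with $A$ smooth and invertible), so $\C_u\subset Z_k$ for every $k$, and powers of $|V_k(u)|$ yield test functions that automatically cut off on the critical set. Away from $\C_u$, the linearized operator is uniformly elliptic with eigenvalues pinched between $\min(1,p-1)\,|\nabla_g u|_g^{p-2}$ and $\max(1,p-1)\,|\nabla_g u|_g^{p-2}$, making $|\nabla_g u|_g^{p-2}$ the natural weight.

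The main step is, for fixed $x\in\Omega$, to choose a spatial cutoff $\psi\in C_c^\infty(\Omega)$ with $\psi\equiv 1$ on a neighborhood of $E$ and to test the linearized equation with
\begin{equation*}
\varphi \;=\; \psi^2\, G_\delta(V_k(u))\, T_\rho(\dist_{\Hn}(x,\cdot)),
\end{equation*}
where $G_\delta(t)=t(t^2+\delta^2)^{-(1+\beta)/2}$ regularizes $t\mapsto \sgn(t)|t|^{-\beta}$ (so that $G'_\delta(t)\gtrsim (t^2+\delta^2)^{-\beta/2}$) and $T_\rho(d)=(d^2+\rho^2)^{-\gamma/2}$ regularizes the singularity at the pole $y=x$. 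Expanding $L_u(V_k(u),\varphi)=0$ isolates the positive contribution
\begin{equation*}
\int \psi^2\, T_\rho\, |\nabla_g u|_g^{p-2}\, |\nabla_g V_k(u)|_g^{2}\, G'_\delta(V_k(u))\,dV_g,
\end{equation*}
which must be balanced against (i) a $\nabla_g\psi$--term harmlessly supported in $\operatorname{supp}\psi\setminus E$ where $|\nabla_g u|_g$ is bounded away from zero; (ii) the right-hand side $\int f'(u)\,V_k(u)\,\varphi\,dV_g$ bounded by $\|f'(u)\|_{L^\infty(\operatorname{supp}\psi)}\int |V_k|^{1-\beta}\,\dist^{-\gamma}\,\psi^2 \, dV_g$, finite because $\gamma<n$ and $V_k$ is bounded on $\operatorname{supp}\psi$; and (iii) the main obstacle, a cross-term from $\nabla_g T_\rho$ carrying a factor $\dist^{-\gamma-1}$.

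The $\nabla_g T_\rho$ obstacle is handled by Young's inequality: write it as the product of $|\nabla_g V_k(u)|_g\,G'_\delta(V_k(u))^{1/2}$ (absorbed into the positive contribution) with $|V_k(u)|^{1-\beta/2}\,\dist^{-\gamma-1}$, reducing the remaining estimate to
\begin{equation*}
\int \psi^2\, |\nabla_g u|_g^{p-2}\, |V_k(u)|^{2-\beta}\, \dist_{\Hn}(x,\cdot)^{-\gamma-2}\,dV_g.
\end{equation*}
This is the crucial point: one iterates on $\gamma$ in the Damascelli--Sciunzi fashion, starting from the base case $\gamma=0$, which reduces to a Cacciopoli-type bound furnished by the global gradient estimate of Theorem~\ref{R2gradestimate}, and bootstrapping through a dyadic decomposition around the pole. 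The restriction $\gamma<n-2$ (i.e.\ $\gamma+2<n$) is exactly the Morrey threshold for the Hyperbolic volume form that keeps the iteration convergent. Sending $\rho,\delta\to 0^+$ by monotone convergence yields \eqref{R2W12regeq1} with a constant uniform in $x\in\Omega$, since $\psi$ and $\|f'(u)\|_{L^\infty}$ depend only on $\Omega$ and $E$.

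Finally, \eqref{R2W12regeq2} is a direct consequence of \eqref{R2W12regeq1}: differentiating $V(u)=A(x)\nabla u$ gives the pointwise bound $|\nabla_g^2 u|_g\le C\bigl(\sum_k |\nabla_g V_k(u)|_g+|\nabla_g u|_g\bigr)$ on any compact subset of $\Hn$, and since $|V_k(u)|\le \|A\|\,|\nabla u|$ we have $|\nabla_g u|_g^{-\beta}\le C\,|V_k(u)|^{-\beta}$ for each $k$. Summing \eqref{R2W12regeq1} over $k$ to control the first piece, and using that $\int_E |\nabla_g u|_g^{p-\beta}\,\dist^{-\gamma}\,dV_g<\infty$ (as $p>\beta$ and $\gamma<n-2$) to control the lower-order remainder, completes the argument. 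The principal conceptual difficulty is the simultaneous handling of the two singular structures---the critical set $\C_u$ and the pole at $x$---which is resolved by the sign-preserving regularization $G_\delta$ and the Morrey-type iteration, respectively.
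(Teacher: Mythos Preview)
Your overall strategy---test the linearized equation \eqref{R2eqn46} against a function built from a spatial cutoff, a regularization in the $V_k$ variable, and the Riesz kernel $\dist_{\Hn}(x,\cdot)^{-\gamma}$---is exactly the Damascelli--Sciunzi scheme the paper follows. However, two concrete points in your execution do not work as written.

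First, your regularization $G_\delta(t)=t(t^2+\delta^2)^{-(1+\beta)/2}$ approximates $\sgn(t)|t|^{-\beta}$, not $\sgn(t)|t|^{1-\beta}$, and the claimed bound $G'_\delta(t)\gtrsim(t^2+\delta^2)^{-\beta/2}$ is false: a direct computation gives
\[
G'_\delta(t)=(t^2+\delta^2)^{-(3+\beta)/2}\bigl(\delta^2-\beta t^2\bigr),
\]
which is \emph{negative} once $|t|>\delta/\sqrt{\beta}$. With this choice the ``positive contribution'' in your expansion carries the wrong sign and cannot absorb the cross-terms. The correct object is a regularization of $\sgn(t)|t|^{1-\beta}$; the paper takes $G_\ve(V_k)/|V_k|^\beta$ with $G_\ve$ a Lipschitz truncation of the identity near $0$, so that the derivative in the $V_k$ variable is $\ge(1-\beta-\sigma)|V_k|^{-\beta}>0$ after the Young step.

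Second, the ``iteration on $\gamma$'' you propose is unnecessary and, as sketched, does not close on the quantity you wrote. The remainder
\[
\int \psi^2\,|\nabla_g u|_g^{p-2}\,|V_k(u)|^{2-\beta}\,\dist_{\Hn}(x,\cdot)^{-\gamma-2}\,dV_g
\]
is bounded \emph{directly}: since $|V_k|\le C|\nabla_g u|_g$ on $\operatorname{supp}\psi$ one has $|\nabla_g u|_g^{p-2}|V_k|^{2-\beta}\le C|\nabla_g u|_g^{p-\beta}$, which is bounded on compact sets (as $p-\beta>0$ and $u\in C^{1,\sigma}_{\mathrm{loc}}$), and $\gamma+2<n$ makes $\dist^{-\gamma-2}$ locally integrable. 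No bootstrap is needed, and Theorem~\ref{R2gradestimate} is in any case inapplicable here since it concerns the specific equation \eqref{R21eqn1}, not the general $f$ of \eqref{R2eqn41}. The paper simply records this direct bound and passes to the limit in $\ve$ by Fatou; the only genuine case distinction is whether the pole $x$ lies in $\C_u$ (then the test function already vanishes at $x$) or not (then an extra cutoff near $x$ together with the $C^{2,\alpha}$ regularity of $u$ away from $\C_u$ disposes of the local singularity).
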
 

    \begin{proof} 
    	
    	 Wlog we may assume $x \in E$. Indeed, suppose we prove $$\sup_{x \in E} \int_{E \setminus Z_k} \frac{|\nabla_g u|_g^{p-2} \, |\nabla_g V_k(u)|_g^2}{|V_k(u)|^\beta \, d_{\Hn}(x,y)^\gamma} \,dV_g(y) := K(\beta, \gamma, E) < C.$$ 
    	Then splitting between the two cases $x \in E_\delta := \left\{x \in \Omega: \dist_{\Bn}(x,E) < \delta\right\}$ with $\dis \delta < \frac{1}{2}\dist_{\Bn}(E,\partial \W)$ and $x \in \Omega \setminus E_\delta$ we have $$\sup_{x \in \Omega} \int_{E \setminus Z_k} \frac{|\nabla_g u|_g^{p-2} \, |\nabla_g V_k(u)|_g^2}{|V_k(u)|^\beta \, d_{\Hn}(x,y)^\gamma} \,dV_g(y) \le K(\beta, \gamma, E_\delta) + \frac{1}{\delta^{\gamma}}K(\beta, 0, E).$$
    	\\ Let $\dis G_\ve(s) = \begin{cases} 0 & \text{ when } |s| \le \ve \\ s & \text{ when } |s| \ge 2\ve \end{cases}$ and linear continuous when $\ve < |s| < 2\ve$. Then $G_\ve$ is a Lipschitz function with $|G_\ve'| \le 2$. Furthermore we have $\dis \left( G_\ve'(s) - \beta \frac{G_\ve(s)}{s} \right) \ge 0$ for all $s \in \R$ and $\beta < 1$.
    	\\\\ \textbf{Case-1:} Let $x \in E \cap \mathcal{C}_u$ and let $\phi \in C_c^\infty(\Omega)$ be a non-negative test function such that $\phi \equiv 1$ in a neighborhood containing $E_\delta := \left\{y \in \Omega : \dist_{\Bn}(y,E) < \delta\right\}$ of $E$ and $\text{supp}(\phi) \subset \W$ with $\delta < \frac{1}{2}\dist_{\Hn}(E,\partial \Omega)$. Consider the test function $\dis \psi_{\ve,x}(y) := \frac{G_\ve(V_k(u)(y))}{|V_k(u)(y)|^\beta} \frac{\phi(y)}{\dist_{\Hn}(x,y)^\gamma}$, for the linearized equation \eqref{R2eqn46}.
    	Then we have \begin{align} \label{R2s1} 
    		& \int_{\W\setminus Z_k} |\nabla_g u|_g^{p-2}|\nabla_g V_k(u)|_g^{2} \left(G_\ve'(V_k(u)) - \beta\frac{G_\ve(V_k(u))}{V_k(u)}\right) \frac{\phi(y)}{|V_k(u)|^\beta \dist_{\Hn}(x,y)^\gamma} \,dV_g(y) \nn 
    		\\& + (p-2) \int_{\W\setminus Z_k} |\nabla_g u|_g^{p-4}g\left(\nabla_g u , \nabla_g V_k(u)\right)^{2} \left(G_\ve'(V_k(u)) - \beta\frac{G_\ve(V_k(u))}{V_k(u)}\right) \frac{\phi(y)}{|V_k(u)|^\beta \dist_{\Hn}(x,y)^\gamma} \,dV_g(y) \nn 
    		\\& + \int_{\W\setminus E_\delta} |\nabla_g u|_g^{p-2}g\left(\nabla_g V_k(u), \nabla_g \phi\right) \frac{G_\ve(V_k(u))}{|V_k(u)|^\beta\dist_{\Hn}(x,y)^\gamma}\,dV_g(y) \nn 
    		\\& + (p-2)\int_{\W\setminus E_\delta} |\nabla_g u|_g^{p-4}g\left(\nabla_g V_k(u), \nabla_g u\right) g(\nabla_g u , \nabla_g \phi) \frac{G_\ve(V_k(u))}{|V_k(u)|^\beta\dist_{\Hn}(x,y)^\gamma}\,dV_g(y) \nn 
    		\\& + \int_{\W\setminus Z_k} |\nabla_g u|_g^{p-2}g\left(\nabla_g V_k(u), \nabla_g \dist_{\Hn}(x,y)^{-\gamma} \right) \frac{G_\ve(V_k(u)) \phi }{|V_k(u)|^\beta}\,dV_g(y) \nn
    		\\& + (p-2)\int_{\W\setminus Z_k} |\nabla_g u|_g^{p-4}g\left(\nabla_g V_k(u), \nabla_g u\right) g(\nabla_g u , \nabla_g \dist_{\Hn}(x,y)^{-\gamma}) \frac{G_\ve(V_k(u)) \phi}{|V_k(u)|^\beta}\,dV_g(y) \nn 
    		\\ &= \int_{\W} f'(u)V_k(u) \psi_{\ve,x}(y)\,dV_g(y). \end{align}
    	\\ Note that the RHS of \eqref{R2s1} is bounded by \begin{align} \label{R2s2} \lVert f'(u) |V_k(u)|^{1 - \beta} \rVert_{L^\infty(\W)} \int_{\W} \frac{\phi(y)}{\dist_{\Hn}(x,y)^\gamma}\,dV_g(y) \le C \end{align} since, $\gamma < n-2$.
    	\\ Now, in the third and fourth integral on LHS of \eqref{R2s1} we have $\dist_{\Hn}(x,y)^{-\gamma} \le \delta^{-\gamma}$ in $\W \setminus E_\delta$. Also, $|\nabla_g u|_g^{p-2}|\nabla_g V_k(u)|_g \le C_1$ since, $\nabla_g u \neq 0$ in $\W \setminus E_\delta$ and $u \in C^{2,\alpha}(\Omega \setminus E_\delta)$ by standard elliptic regularity theory. Therefore, the two integrals are bounded by \begin{align} \label{R2s3} (p-1)\int_{\W \setminus E_\delta} \frac{|\nabla_g u|^{p-2} |\nabla_g V_k(u)|_g|\nabla_g \phi|}{\dist_{\Hn}(x,y)^\gamma} \frac{|G_\ve(V_k(u))|}{|V_k(u)|^\beta} \,dV_g(y) \le C_1'\delta^{-\gamma}. \end{align}
    	\\ The fifth and sixth integrals on LHS of \eqref{R2s1} is bounded by \begin{align}\label{R2s4} (p-1) \int_{\W \setminus Z_k} \frac{|\nabla_g u|^{p-2} |\nabla_g V_k(u)|_g}{\dist_{\Hn}(x,y)^{\gamma + 1}} \frac{|G_\ve(V_k(u))| \phi}{|V_k(u)|^\beta} \,dV_g(y). \end{align} Now, using Young's inequality for any $\sigma > 0$ we may write \begin{align}\label{R2s5} \frac{|\nabla_g V_k(u)|_g}{\dist_{\Hn}(x,y)^{\gamma + 1}} \le \sigma \frac{|\nabla_g V_k(u)|_g^2}{|V_k(u)|\dist_{\Hn}(x,y)^{\gamma}} + \frac{1}{4\sigma} \frac{|V_k(u)|}{\dist_{\Hn}(x,y)^{\gamma+2}}. \end{align}
    	Therefore, using \eqref{R2s5} in \eqref{R2s4} and the fact that $|G_\ve(V_k(u))| \le |V_k(u)| \le C|\nabla_g u|$ in $\Omega$ we have 
    	\begin{align}\label{R2s6} & (p-1) \int_{\W \setminus Z_k} \frac{|\nabla_g u|^{p-2} |\nabla_g V_k(u)|_g}{\dist_{\Hn}(x,y)^{\gamma + 1}} \frac{|G_\ve(V_k(u))| \phi}{|V_k(u)|^\beta} \,dV_g(y) \nn \\ \le & \quad \min\{1, p-1\}\sigma \int_{\W \setminus Z_k} \frac{|\nabla_g u|^{p-2} |\nabla_g V_k(u)|_g^2}{\dist_{\Hn}(x,y)^{\gamma}} \frac{|G_\ve(V_k(u))|}{|V_k(u)|} \frac{\phi}{|V_k(u)|^{\beta}} \,dV_g(y) \nn \\ & \quad  + \frac{(p-1)^2}{4\min\{1, p-1\} \sigma} \int_{\W \setminus Z_k} \frac{|\nabla_g u|^{p-\beta}\phi}{\dist_{\Hn}(x,y)^{\gamma + 2}} \,dV_g(y) \end{align} where, the last integral on RHS of \eqref{R2s6} is bounded as $\gamma < n-2$.
    	\\ Considering the two cases $p > 2$ and $1 < p < 2$, using Cauchy-Schwartz inequality we have $$\min\{1 , p-1\} |\nabla_g V_k(u)|_g^{2} \le |\nabla_g V_k(u)|_g^{2} + (p-2) |\nabla_g u|_g^{-2} g\left(\nabla_g u, \nabla_g V_k(u)\right)^2.$$ So the sum of first two integrals on LHS of \eqref{R2s1} is bounded from below by \begin{align} \label{R2s7} \min\{1 , p-1\} \int_{\W\setminus Z_k} |\nabla_g u|_g^{p-2}|\nabla_g V_k(u)|_g^{2} \left(G_\ve'(V_k(u)) - \beta\frac{G_\ve(V_k(u))}{V_k(u)}\right) \frac{\phi(y)}{|V_k(u)|^\beta \dist_{\Hn}(x,y)^\gamma} \,dV_g(y). \end{align}
    	\\ Choosing $\sigma$ small s.t., $1 - \beta - \sigma > 0$ and absorbing the first integral on the RHS on \eqref{R2s6} in \eqref{R2s7} and combining with the bounds established above in \eqref{R2s1}, 
    	\begin{align} \label{R2s8} \int_{\W\setminus Z_k} |\nabla_g u|_g^{p-2}|\nabla_g V_k(u)|_g^{2} \left(G_\ve'(V_k(u)) - (\beta + \sigma)\frac{G_\ve(V_k(u))}{V_k(u)}\right) \frac{\phi(y)}{|V_k(u)|^\beta \dist_{\Hn}(x,y)^\gamma} \,dV_g(y) \le C_0.\end{align}
    	Now, $\dis \left(G_\ve'(V_k(u)) - (\beta + \sigma)\frac{G_\ve(V_k(u))}{V_k(u)}\right) \to 1 - (\beta + \sigma)$ in $\W \setminus Z_k$ as $\ve \to 0$, so by applying Fatou's lemma in \eqref{R2s8},
    	\begin{align}\label{R2s9} 
    		\int_{E\setminus Z_k} \frac{|\nabla_g u|_g^{p-2}|\nabla_g V_k(u)|_g^{2}}{|V_k(u)|^\beta \dist_{\Hn}(x,y)^\gamma} \,dV_g(y) \le \int_{\W\setminus Z_k} \frac{|\nabla_g u|_g^{p-2}|\nabla_g V_k(u)|_g^{2} \phi(y)}{|V_k(u)|^\beta \dist_{\Hn}(x,y)^\gamma} \,dV_g(y) \le C_0 
    	\end{align} 
    	where, $C_0$ does not depend on $x \in E\cap \mathcal{C}_u$. Also, since $\nabla_g V_k(u) = 0$ a.e. in $Z_k \setminus \mathcal{C}_u$ it follows that 
    	\begin{align} \label{R2s10} 
    		\int_{E\setminus \mathcal{C}_u} \frac{|\nabla_g u|_g^{p-2}|\nabla_g V_k(u)|_g^{2}}{|V_k(u)|^\beta \dist_{\Hn}(x,y)^\gamma} \,dV_g(y) \le C_0. 
    	\end{align}
    	\\ Thus the conclusion of \eqref{R2W12regeq2} now easily follows from \eqref{R2eqn47}.
    	\\\\ \textbf{Case-2:} Let, $x \in E \setminus \mathcal{C}_u$. Then choose $\ve_0 > 0$ s.t., $B_{2\ve_0}(x) \cap \mathcal{C}_u = \emptyset$. For $\ve < \ve_0$ choose non-negative test function $\phi_{\ve,x} \in C_c^\infty(\W)$ s.t., $\phi_{\ve,x} \equiv 0$ in $B_\ve(x)$ and $\phi_{\ve,x} \equiv 1$ in $E_\delta \setminus B_{2\ve}(x)$, $|\nabla_g \phi_{\ve,x}|_g \le \frac{C}{\ve}$ in $B_{2\ve}(x) \setminus B_\ve(x)$ and $|\nabla_g \phi_{\ve,x}|_g \le C$ in $\W \setminus B_{2\ve}(x)$.
    	\\ Then using the test function $\dis \psi_{\ve,x} := \frac{G_\ve(V_k(u)(y))}{|V_k(u)(y)|^\beta} \frac{\phi_{\ve,x}(y)}{\dist_{\Hn}(x,y)^\gamma}$ in \eqref{R2eqn46} and proceeding as before we have 
    	\begin{align}\label{R2s11} & \int_{\W\setminus Z_k} |\nabla_g u|_g^{p-2}|\nabla_g V_k(u)|_g^{2} \left(G_\ve'(V_k(u)) - (\beta + \sigma)\frac{G_\ve(V_k(u))}{V_k(u)}\right) \frac{\phi_{\ve,x}(y)}{|V_k(u)|^\beta \dist_{\Hn}(x,y)^\gamma} \,dV_g(y) \nn 
    		\\ & \le C_0' + C' \int_{B_{2\ve}(x)\setminus B_\ve(x)} |\nabla_g u|_g^{p-2}|\nabla_g V_k(u)|_g \frac{|G_{\ve}(V_k(u))||\nabla_g \phi_{\ve,x}(y)|_g}{|V_k(u)|^\beta \dist_{\Hn}(x,y)^\gamma} \,dV_g(y). \end{align}
    	\\ Then using the fact that $u \in C^{2,\alpha}(B_{2\ve_0}(x))$ so that $\sup_{B_{2\ve_0}(x)} |\nabla_g u|_g^{p-2}|\nabla_g V_k(u)|_g|V_k(u)|^{1 - \beta} = a(x)$ is a bounded quantity independent of $\ve > 0$, and noting that $V_g(B_{2\ve}(x)) \approx \ve^n$ we get 
    	\begin{align} \label{R2s12} \int_{B_{2\ve}(x)\setminus B_\ve(x)} |\nabla_g u|_g^{p-2}|\nabla_g V_k(u)|_g \frac{|G_{\ve}(V_k(u))||\nabla_g \phi_{\ve,x}(y)|_g}{|V_k(u)|^\beta \dist_{\Hn}(x,y)^\gamma} \,dV_g(y) \le C'a(x) \frac{\ve^n}{\ve^{\gamma + 1}} \to 0^+ \end{align} as $\ve \to 0$. Thus using Fatou's lemma as before in \eqref{R2s11} we have \eqref{R2W12regeq1} in this case as well. \end{proof}
    
    \begin{remark}\label{R2rmkreg} Note that if $u$ be as in Lemma-\ref{R2W12reg}, by Stampacchia's lemma $|\nabla_g u|_g^{p-2}\nabla_g u \in W^{1,2}_{\text{loc}}(\W)$. Indeed if we set $G_\ve$ as in Lemma-\ref{R2W12reg} then $$ \frac{\partial}{\partial x_j} G_\ve\left(|\nabla u|^{p-2}u_{x_i}\right) = G_\ve'\left(|\nabla u|^{p-2}u_{x_i}\right)\left[|\nabla u|^{p-2}u_{x_ix_j} + (p-2)|\nabla u|^{p-4}(\nabla u, \nabla u_{x_j})u_{x_i}\right]$$ a.e. in $\Omega$. Also, applying previous lemma with $\beta = \gamma = 0$ we get $$\lVert G_\ve\left(|\nabla u|^{p-2}u_{x_i}\right) \rVert_{W^{1,2}(E)} \le C, \, \forall \, \ve \text{ small }$$ where, $E \subset \subset \W$ and $C$ depends only on $E$. Then by compact embedding $W^{1,2}(E) \subset L^2(E)$ we may extract a sequence $\ve_m \to 0$ s.t., $G_{\ve_m}\left(|\nabla u|^{p-2}u_{x_i}\right) \to v$ in $L^2(E)$ and pointwise a.e. in $E$ for some $v \in W^{1,2}(E)$. However, $G_{\ve_m}\left(|\nabla u|^{p-2}u_{x_i}\right) \to |\nabla u|^{p-2}u_{x_i}$ a.e. in $E$, thus $|\nabla u|^{p-2}u_{x_i} \equiv v \in W^{1,2}(E)$.
    \end{remark}
 
    \begin{remark}\label{R2rmklineqn} As a consequence we have $V_j(u)$ is a weak solution to the linearized equation \eqref{R2eqn46}, for $j = 1, \cdots, n$. The Sobolev regularity of $|\nabla_g u|_g^{p-2}\nabla_g u$ can be used to justify the differentiation under integral sign, in particular where $\text{supp}(\varphi)$ intersects the critical point set $\C_u$. Therefore we have 
    	\begin{align}\label{R2eqnregimp}
    		L_u(V_j(u), \varphi) = 0 
    	\end{align} 
    	$\forall \, \varphi \in C_c^\infty(\Hn)$ and $j = 1, \cdots, n$.
    \end{remark}
    \, \\ With the aid of Lemma-\ref{R2W12reg} one may then proceed exactly as in the proof of Theorem-2.3 in \cite{DS1} for the Euclidean case, to prove the following lemma concerning the integrability of negative exponents of $|\nabla_g u|_g$ along with a potential.
    \begin{lemma}\label{R2negint1} Let $u$ be as in Lemma-\ref{R2W12reg}. Then \begin{align}\label{R2int2} \sup_{x\in \Omega} \int_{\Omega} |\nabla_g u|_g^{-(p-1)r}\dist_{\Hn}(x,y)^{-\gamma} \,dV_g(y) < C \end{align} where, $r,\gamma$ are non-negative real numbers such that $0 \le r < 1$, $0 \le \gamma < n-2$ when $n \ge 3$ ($\gamma = 0$ when $n=2$). In particular this implies $|\C_u| = 0$.
     \end{lemma}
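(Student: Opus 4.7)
We adapt the Euclidean argument of \cite{DS1}, Theorem 2.3, to the Hyperbolic setting; carrying the Hyperbolic volume element along is cosmetic, since on any relatively compact subset of $\Hn$ the conformal factor $x_n^{-n}$ is bounded above and below and therefore contributes only to constants. Outside any open neighborhood $E$ of the compact set $\mathcal{C}_u$ we have $|\nabla_g u|_g \ge c > 0$ by continuity, while a standard Riesz-potential estimate gives $\int_\Omega \dist_{\Hn}(x,y)^{-\gamma}\, dV_g(y) \le C$ uniformly for $x \in \Omega$ when $\gamma < n-2$ (and trivially when $\gamma = 0$, $n = 2$). So \eqref{R2int2} is immediate on $\Omega \setminus E$, and it remains to prove the bound with $E$ a relatively compact neighborhood of $\mathcal{C}_u$.

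On $E$ we use the linearized equation $L_u(V_k(u), \varphi) = 0$ satisfied for every $\varphi \in C_c^\infty(\Omega)$ by each component $V_k(u)$, $k = 1, \ldots, n$ (Remark \ref{R2rmklineqn}). The plan is to iterate Lemma \ref{R2W12reg}: for $\beta \in (0, 1)$ the identity
$$\frac{|\nabla_g u|_g^{p-2}\,|\nabla_g V_k(u)|_g^{2}}{|V_k(u)|^{\beta}\,\dist_{\Hn}(x,y)^{\gamma}} \;=\; \frac{4}{(2-\beta)^{2}}\cdot\frac{|\nabla_g u|_g^{p-2}\,\bigl|\nabla_g\bigl(|V_k(u)|^{1-\beta/2}\bigr)\bigr|_g^{2}}{\dist_{\Hn}(x,y)^{\gamma}}$$
shows that Lemma \ref{R2W12reg} supplies a weighted $H^{1}$-bound on $|V_k(u)|^{1-\beta/2}$ with weight $|\nabla_g u|_g^{p-2}\dist_{\Hn}(x,\cdot)^{-\gamma}$. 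Combining with a weighted Sobolev--Poincar\'{e} inequality on $E$ and Chebyshev gives an initial integrability bound for $|V_k(u)|^{-s}$ with some $s > 0$; this is then pushed to every $s < p-1$ by refining the test function in the linearized equation and feeding the just-gained integrability back into the absorption step, exactly as in \cite{DS1}. Since $V(u) = A(x)\nabla u$ with $\|A(x)^{-1}\|$ uniformly bounded on $\overline E \subset\subset \Hn$, we have $|\nabla u|^{-s} \le C\sum_{k=1}^n |V_k(u)|^{-s}$, and the identity $|\nabla_g u|_g = x_n|\nabla u|$, together with the boundedness above and below of $x_n$ on $\overline E$, converts this into \eqref{R2int2} for every $r \in [0, 1)$.

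The conclusion $|\mathcal{C}_u| = 0$ is then immediate: were $|\mathcal{C}_u| > 0$, the integrand in \eqref{R2int2} would equal $+\infty$ on a set of positive measure, contradicting the finite bound. The main technical obstacle, inherited from the Euclidean treatment in \cite{DS1}, is the bootstrap in the middle paragraph: one has to verify that the weighted Sobolev embedding with weight $|\nabla_g u|_g^{p-2}\dist_{\Hn}^{-\gamma}$ holds on $E$ with a constant that does not degenerate through the successive iterations. In the Hyperbolic setting this reduces, after localization in a chart, to the Muckenhoupt-type analysis already carried out in \cite{DS1}.
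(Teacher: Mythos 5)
Your opening reduction (away from a relatively compact neighborhood $E$ of $\mathcal{C}_u$ the gradient is bounded below and the Riesz potential $\int_\Omega \dist_{\Hn}(x,y)^{-\gamma}\,dV_g$ is uniformly bounded for $\gamma<n-2$) and your closing deduction of $|\mathcal{C}_u|=0$ are both fine. The core mechanism in your middle paragraph, however, does not work. Rewriting the quantity controlled by Lemma \ref{R2W12reg} as a weighted Dirichlet energy of $w:=|V_k(u)|^{1-\beta/2}$ and then invoking a weighted Sobolev--Poincar\'e inequality only produces an upper bound on $\lVert w\rVert_{L^{q}}$, i.e. on a \emph{positive} power of $|V_k(u)|$ — which is already trivially bounded since $u\in C^{1,\sigma}_{\text{loc}}$. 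Chebyshev applied to such a bound controls the measure of the superlevel sets $\{|V_k(u)|>t\}$; it gives no information at all on the sublevel sets $\{|V_k(u)|<t\}$, which is precisely what integrability of a negative power requires. Consequently you never obtain the ``initial integrability bound for $|V_k(u)|^{-s}$'' that your bootstrap is supposed to feed on, and the iteration cannot start.

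The missing ingredient — and the one the proof of Theorem 2.3 in \cite{DS1}, which the paper follows verbatim, actually rests on — is the equation itself together with the strict positivity of the source: on $\overline{E}$ one has $f(u)\ge c_0>0$, so for $\psi$ a cutoff equal to $1$ on $E$,
\[
c_0\int_{E} \frac{dV_g(y)}{\left(|\nabla_g u|_g^{p-1}+\varepsilon\right)^{r}\dist_{\Hn}(x,y)^{\gamma}} \;\le\; \int \frac{\left(-\Delta_p^{\Hn}u\right)\psi}{\left(|\nabla_g u|_g^{p-1}+\varepsilon\right)^{r}\dist_{\Hn}(x,y)^{\gamma}}\,dV_g(y),
\]
and after integrating by parts the three resulting terms are controlled because $|\nabla_g u|_g^{p-1}\left(|\nabla_g u|_g^{p-1}+\varepsilon\right)^{-r}$ is bounded (here $r<1$ enters), because $\gamma+1<n-1$, and because the term containing $\nabla_g\!\left(|\nabla_g u|_g^{p-1}\right)$ is absorbed via Young's inequality using the weighted Hessian estimate \eqref{R2W12regeq2} of Lemma \ref{R2W12reg}; Fatou as $\varepsilon\to0^+$ then yields \eqref{R2int2}. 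Note that the positivity of $f$ is not a technicality: on $\mathcal{C}_u$ the stress field $|\nabla_g u|_g^{p-2}\nabla_g u$ is continuous (Remark \ref{R2rmkreg}) and vanishes, so $-\Delta_p^{\Hn}u=0$ a.e. there, and $|\mathcal{C}_u|>0$ would contradict $f(u)>0$; without $f>0$ the lemma is simply false. Your proposal never invokes $f>0$ anywhere, which is the clearest sign that the argument as written cannot close.
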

    \, \\ The following inequalities are a direct consequence of corresponding Euclidean inequalities of Theorem-3.1 in \cite{DS1} and the conformal relation of the Hyperbolic volume element with the Euclidean volume element which we state here for the sake of clarity.
    
    \begin{lemma}\label{R2weightedpoincarelemma1}
    Let, $\tilde{\Omega}$ be a bounded domain in $\Hn$ and let $\Omega \subset \tilde{\Omega}$ be a subdomain. Suppose $\rho \in L^1(\tilde{\Omega})$ to be a positive weight function satisfying 
    \begin{align}\label{R2negintcond1}
    		\int_{\tilde{\Omega}} \rho^{-t}(y)\dist_{\Hn}(x,y)^{-\gamma} \,dV_g(y) \le C(\tilde{\Omega}), \, \forall \, x \in \tilde{\Omega}
    \end{align} for some positive number $t > 0$ and $0 \le \gamma < n-2$.
    Let, $m$ be an exponent satisfying $m > 1 + \frac{1}{t}$ and $m > \frac{n-\gamma}{t}$. 
    \begin{enumerate}
    \item  If $m$ satisfies $m < n\left(1 + \frac{1}{t}\right) - \frac{\gamma}{t}$ and define the exponent $m^\sharp$ as $\frac{1}{m^\sharp} := \frac{1}{m} - \frac{1}{n} + \frac{1}{mt}\left(1 - \frac{\gamma}{n}\right)$ then the following weighted Sobolev inequality is valid
    \begin{align}\label{R2weightedsobolev1}
    	\lVert v \rVert_{L^{m^\sharp}(\Omega)} \le C \lVert \nabla_g v \Vert_{L^m(\Omega, \rho)}, \, \forall \, v \in W_0^{1,m}(\Omega, \rho)
    \end{align}
    where, the constant $C = C(n,m,t,\gamma, \rho,\tilde{\Omega})$.
    \item If $m$ satisfies $m = n\left(1 + \frac{1}{t}\right) - \frac{\gamma}{t}$ then we have 
    \begin{align}\label{R2weightedineq2}
    	\lVert v \rVert_{L^{q}(\Omega)} \le C_q \lVert \nabla_g v \Vert_{L^m(\Omega, \rho)}, \, \forall \, v \in W_0^{1,m}(\Omega, \rho)
    \end{align}
    for all $q > 1$.
    \item If $m$ satisfies $m > n\left(1 + \frac{1}{t}\right) - \frac{\gamma}{t}$ then we have 
    \begin{align}\label{R2weightedineq3}
    	\lVert v \rVert_{L^{\infty}(\Omega)} \le C\lVert \nabla_g v \Vert_{L^m(\Omega, \rho)}, \, \forall \, v \in W_0^{1,m}(\Omega, \rho).
    \end{align}
    \end{enumerate}
    Finally, we also have the following weighted Poincar\'{e} inequality 
    \begin{align}\label{R2weightedpoincare1}
    	\lVert v \rVert_{L^{m}(\Omega)} \le C\Lambda(\Omega) \lVert \nabla_g v \Vert_{L^m(\Omega, \rho)}, \, \forall \, v \in W_0^{1,m}(\Omega, \rho)
    \end{align}
    where, $\Lambda(\Omega) = |\Omega|^{\frac{1}{m} - \frac{1}{m^\sharp}}$.
    \\ The above inequalities are also valid for functions $v$ with zero mean, so that we have corresponding Poincar\'{e}-Sobolev and Poincar\'{e}-Wirtinger inequalities as well.
    \end{lemma}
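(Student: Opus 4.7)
The plan is to reduce the statement directly to the Euclidean weighted Sobolev/Poincar\'e inequalities (Theorem 3.1 of \cite{DS1}) by exploiting the conformal representation of the hyperbolic metric. I would work in the Poincar\'e ball model $(\Bn,g)$ where $g = \phi^2 g_E$ with $\phi(x) := \frac{2}{1-|x|^2}$. Since $\tilde\Omega$ is bounded in the hyperbolic sense, its Euclidean closure is a compact subset of the open unit ball, and therefore $c_1 \le \phi(x) \le c_2$ uniformly on $\tilde\Omega$ for constants depending only on $\tilde\Omega$. From this I extract three elementary comparisons valid on $\tilde\Omega$: (i) the volume forms satisfy $dV_g = \phi^n\,dx$, so $dV_g$ and $dx$ are comparable; (ii) the gradients satisfy $|\nabla_g v|_g = \phi^{-1}|\nabla v|$, so
\begin{equation*}
\int_\Omega |\nabla_g v|_g^m\,\rho\,dV_g \;=\; \int_\Omega |\nabla v|^m\,\rho\,\phi^{n-m}\,dx \;\approx\; \int_\Omega |\nabla v|^m\,\rho\,dx;
\end{equation*}
(iii) the hyperbolic and Euclidean distances are bi-Lipschitz equivalent on any compact subset of $\Bn$, i.e.\ $\dist_{\Hn}(x,y) \approx |x-y|$ on $\tilde\Omega$.

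Using (i) and (iii) the hypothesis \eqref{R2negintcond1} becomes (up to a multiplicative constant depending on $\tilde\Omega$) the purely Euclidean weight condition
\begin{equation*}
\int_{\tilde\Omega} \rho^{-t}(y)\, |x-y|^{-\gamma}\,dy \;\le\; C'(\tilde\Omega), \qquad \forall\, x \in \tilde\Omega,
\end{equation*}
which is exactly the assumption of Theorem 3.1 of \cite{DS1}. Applying that result yields the corresponding Euclidean Sobolev/$L^q$/$L^\infty$ estimates in each of the three regimes for $m$, and the Sobolev exponent $m^\sharp$ defined via $\frac{1}{m^\sharp} = \frac{1}{m} - \frac{1}{n} + \frac{1}{mt}\bigl(1-\frac{\gamma}{n}\bigr)$ is a function only of $n,m,t,\gamma$, hence is preserved in the translation. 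Composing with (i) and (ii) then converts each Euclidean estimate into its hyperbolic counterpart \eqref{R2weightedsobolev1}--\eqref{R2weightedineq3}.

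Finally, the weighted Poincar\'e inequality \eqref{R2weightedpoincare1} is obtained from \eqref{R2weightedsobolev1} by a single application of H\"older's inequality, $\|v\|_{L^m(\Omega)} \le |\Omega|^{1/m - 1/m^\sharp}\|v\|_{L^{m^\sharp}(\Omega)}$, where $|\Omega|$ may be taken as the hyperbolic volume since it is comparable on $\tilde\Omega$ to the Euclidean one; this gives exactly the stated $\Lambda(\Omega) = |\Omega|^{\frac{1}{m}-\frac{1}{m^\sharp}}$. The mean-zero variants follow along the same route via the Euclidean mean-zero analogues already present in \cite{DS1}. There is no conceptual obstacle in this argument; the only bookkeeping is that every multiplicative constant picks up a dependence on $\tilde\Omega$ through the uniform bounds on $\phi$, which is permitted by the statement since the conclusion records the constant as $C=C(n,m,t,\gamma,\rho,\tilde\Omega)$.
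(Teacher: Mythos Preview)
Your proposal is correct and follows exactly the approach the paper indicates: the paper states just before the lemma that ``the following inequalities are a direct consequence of corresponding Euclidean inequalities of Theorem-3.1 in \cite{DS1} and the conformal relation of the Hyperbolic volume element with the Euclidean volume element,'' and gives no further proof. You have simply written out in detail the conformal comparison (boundedness of the conformal factor on a hyperbolically bounded $\tilde\Omega$, equivalence of volume forms, gradients, and distances) that makes this reduction work.
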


    \begin{remark}\label{R2rmkweightedsobolev2}
    	The case $p > 2$ and $m = 2$ is a particularly relevant one. In this case we are in dimension $n > p > 2$. When $u$ is as in Lemma-\ref{R2W12reg} considering $\rho \equiv |\nabla_g u|_g^{p-2}$, from Lemma-\ref{R2negint1} we have $\rho$ satisfies the condition \eqref{R2negintcond1} for all $t = \frac{p-1}{p-2}r < \frac{p-1}{p-2} = 1 + \frac{1}{p-2}$ and $0 \le \gamma < n-2$. Then for the exponent $2^\sharp$ we have $$\frac{1}{2^\sharp} = \frac{1}{2} - \frac{1}{n} + \frac{n-\gamma}{2nt} > \frac{1}{2} - \frac{1}{n} + \frac{1}{n}\left(\frac{p-2}{p-1}\right) := \frac{1}{\overline{2}^\sharp}$$ where we used the fact that $n - \gamma > 2$ and $\frac{1}{t} > \frac{p-2}{p-1}$. Furthermore choosing $t > \frac{n-\gamma}{2}$ ensures $\frac{1}{2} > \frac{1}{2^\sharp}$. This would require $\frac{n-\gamma}{2} < t < \frac{p-1}{p-2}$, which is always possible if we choose $\gamma$ close to $n-2$ (recall that $\gamma < n-2$). Therefore for such choice of $t$ and $\gamma$ we have $2 < 2^\sharp < \overline{2}^\sharp$. This implies the weighted $2$-Sobolev Inequality \eqref{R2weightedsobolev1} with exponent $2^\sharp > 2$ and weighted $2$-Poincar\'{e} inequality \eqref{R2weightedpoincare1} (and weighted $2$-Poincar\'{e}-Wirtinger inequality) with weight $\rho = |\nabla_g u|_g^{p-2}$ are valid as well.
    \end{remark}
    \,
    \\ With the aid of the Sobolev and Poincar\'{e} inequalities one can employ a Moser's iteration argument exactly as in \cite{DS2} to get Harnack inequalities for bounded and non-negative sub(super)-solutions of the linearized equation \eqref{R2eqn45}. In case $u$ does not have any critical points then the Moser's iteration can be continued with the standard unweighted Sobolev and Poincar\'{e} inequalities.
    
    \begin{lemma}\label{R2harnack1}
    	Let $2< p < n$ and $v \in W^{1,2}(\Omega, \rho) \cap L^\infty(\Omega)$ be a non-negative weak supersolution to the linearized equation \eqref{R2eqn46} where, $\rho = |\nabla_g u|_g^{p-2}$ with $u$ being as in Lemma-\ref{R2W12reg}. Then for each $B_{2\delta}(x) \subset \Omega$ and any $s < \chi = \frac{2^\sharp}{2}$ there is a constant $C>0$ (depending on $x, s, n, p, u, f$) such that 
    	\begin{align}\label{R2h1}
    		\lVert v \rVert_{L^s(B_{2\delta}(x))} \le C \inf_{B_\delta(x)} v.
    	\end{align}
        \\ On the other hand if $v \in W^{1,2}(\Omega, \rho) \cap L^\infty(\Omega)$ be a non-negative weak subsolution to the linearized equation \eqref{R2eqn46}, then for each $B_{2\delta}(x) \subset \Omega$ and any $s < \chi = \frac{2^\sharp}{2}$ there is a constant $C>0$ (depending on $x, s, n, p, u, f$) such that 
        \begin{align}\label{R2h3}
        	 \sup_{B_\delta(x)} v \le C\lVert v \rVert_{L^s(B_{2\delta}(x))}.
        \end{align}
        If, $\frac{2n+2}{n+2} < p < 2$ then the same conclusions hold for $s < \chi' = \frac{2^\ast}{2}\left(1 - \frac{1}{q}\right)$ where, $q < \frac{p-1}{2-p}$ denotes the integrability of $\rho$ i.e., $\rho \in L^q(\Omega)$.
        \\ Thus if $v \in W^{1,2}(\Omega, \rho) \cap L^\infty(\Omega)$ be a non-negative weak solution to the linearized equation \eqref{R2eqn46}, then for each $B_{2\delta}(x) \subset \Omega$ we have
        \begin{align}\label{R2h4}
        	\sup_{B_\delta(x)} v \le C\inf_{B_\delta(x)} v.
        \end{align}
        \\ As a result we have the strong minimum-principle for non-negative solutions, i.e., either $v > 0$ in $\Omega$ or $v \equiv 0$ in $\Omega$. When $\C_u = \emptyset$ meaning when $u$ does not have any critical points then the requirement $p > \frac{2n+2}{n+2}$ can be replaced by any $p \in (1,n)$ and both the Harnack and consequently the strong minimum-principle remains valid.
    \end{lemma}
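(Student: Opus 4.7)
The plan is to carry out a Moser iteration for the linearized operator $L_u$ using the weighted Sobolev and Poincaré inequalities from Lemma \ref{R2weightedpoincarelemma1} with weight $\rho = |\nabla_g u|_g^{p-2}$. The key algebraic observation is that, by the inequality $\min\{1,p-1\}|\xi|^2 \le |\xi|^2 + (p-2)|\nabla_g u|_g^{-2} g(\nabla_g u,\xi)^2 \le \max\{1,p-1\}|\xi|^2$, the symmetric bilinear part of $L_u$ (the first two terms on the right-hand side of \eqref{R2eqn45}) is comparable to $\rho\, g(\nabla_g v,\nabla_g \varphi)$, so that $\rho$ is the natural weight for the Caccioppoli estimate. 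Since $v \in L^\infty$ and $f'(u)$ is bounded on relatively compact subsets of $\Omega$, the lower-order term $\int f'(u) v \varphi\,dV_g$ will always be harmlessly absorbed.

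For the case $2<p<n$ and $v$ a non-negative supersolution, I would test \eqref{R2eqnregimp} with $\varphi = \eta^2 (v+\epsilon)^{\beta}$ for $\beta \in \mathbb{R}\setminus\{-1\}$ and a standard cut-off $\eta$ supported in $B_{2\delta}(x)$. After rearranging and applying the weighted Sobolev inequality \eqref{R2weightedsobolev1} from Lemma \ref{R2weightedpoincarelemma1} (valid since $\rho^{-t}$ has the integrability \eqref{R2negintcond1} from Lemma \ref{R2negint1}), one obtains a reverse Hölder step
\begin{equation*}
\|v+\epsilon\|_{L^{\chi s}(B_{r_1}(x))} \le \left(\frac{C}{r_2-r_1}\right)^{2/|s|}\|v+\epsilon\|_{L^{s}(B_{r_2}(x))},\qquad \chi = \tfrac{2^\sharp}{2} > 1,
\end{equation*}
for concentric balls $B_{r_1} \subset B_{r_2} \subset B_{2\delta}(x)$. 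Iterating over positive exponents yields the $L^s \to L^\infty$ bound for subsolutions in \eqref{R2h3}; iterating over negative exponents gives an $L^{-\infty}\to L^{-s}$ bound. To bridge these, test with the logarithmic choice $\beta = -1$ and combine with the weighted Poincaré--Wirtinger inequality for $\log(v+\epsilon)$ to show $\log(v+\epsilon)$ is in BMO, from which a John--Nirenberg-type argument supplies the missing estimate, completing the weak Harnack \eqref{R2h1}. Letting $\epsilon \to 0$ concludes.

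For $\frac{2n+2}{n+2} < p < 2$ the weight $\rho$ diverges near $\mathcal{C}_u$ but $|\nabla_g u|_g \in L^\infty$ ensures $\rho \in L^q$ for some $q < \frac{p-1}{2-p}$; the hypothesis $p > \frac{2n+2}{n+2}$ is exactly what guarantees $q > n/2$, hence $\chi' = \frac{2^*}{2}(1-\frac{1}{q}) > 1$. In this regime one uses Hölder's inequality with exponents $(q,q')$ together with the ordinary (unweighted) Sobolev inequality to convert $\int \rho |\nabla_g w|_g^2\,dV_g$ into an $L^{2^*(1-1/q)}$ bound on $w$, and the iteration runs with $\chi$ replaced by $\chi'$. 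The strong minimum principle is then immediate from \eqref{R2h4}: if $v(x_0) = 0$ then $\sup_{B_\delta(x_0)} v \le C \inf_{B_\delta(x_0)} v = 0$, so the zero set of $v$ is open, and a connectedness argument gives $v \equiv 0$. When $\mathcal{C}_u = \emptyset$, the weight $\rho$ is bounded above and below on compact subsets of $\Omega$, so the unweighted Sobolev and Poincaré inequalities suffice and the restriction $p > \frac{2n+2}{n+2}$ can be dropped.

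The main obstacle I anticipate is the logarithmic estimate that bridges positive and negative exponents: this requires the weighted Poincaré inequality \eqref{R2weightedpoincare1} applied to $\log(v+\epsilon)$ in conjunction with a Caccioppoli-type bound for $\log v$. Verifying that the chain of admissibility conditions in Lemma \ref{R2weightedpoincarelemma1}, i.e.\ the compatible ranges of $t$ and $\gamma$ in \eqref{R2negintcond1}, is simultaneously satisfied for the iteration step and for the logarithmic step (including in the $1 < p < 2$ regime where $\rho$ must instead appear with a positive exponent) is the central bookkeeping task. The rest of the argument is a faithful transcription of the Damascelli--Sciunzi scheme to the Hyperbolic setting, made possible by the Sobolev regularity of $|\nabla_g u|_g^{p-2}\nabla_g u$ from Remark \ref{R2rmkreg} and the weighted functional inequalities established in Lemma \ref{R2weightedpoincarelemma1}.
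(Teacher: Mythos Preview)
Your proposal is correct and follows precisely the approach the paper indicates: the paper does not give a detailed proof of this lemma but simply states that ``one can employ a Moser's iteration argument exactly as in \cite{DS2}'' using the weighted Sobolev and Poincar\'e inequalities of Lemma~\ref{R2weightedpoincarelemma1}, which is exactly the scheme you have outlined (Caccioppoli with test function $\eta^2(v+\epsilon)^\beta$, reverse H\"older via the weighted Sobolev inequality, iteration, logarithmic bridge via John--Nirenberg, and the $p<2$ variant with $\chi'$). Your identification of the condition $p>\frac{2n+2}{n+2}$ as equivalent to $\frac{p-1}{2-p}>\frac{n}{2}$ (hence $\chi'>1$) and your treatment of the case $\mathcal{C}_u=\emptyset$ also match the paper's remarks.
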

    \,
    \\ In the following lemma we state the strong comparison theorem which has been used in the proof of symmetry. The proof of the theorem again follows Moser's iteration scheme which can be continued with the standard unweighted Sobolev and Poincar\'{e} inequalities as long as we stay away from the critical point sets of the solutions in concern. 
    
    \begin{lemma}\label{R2strongcomparison1}
    	Let $u , v \in C^{1,\sigma}(\Omega)$ be solutions of the equation \eqref{R2eqn41}. Suppose, $u \le v$ in $\Omega$ and $\Omega' \subset \Omega \setminus (\C_u \cap \C_v)$ to be any connected component. Then for each $B_{2\delta}(x) \subset \Omega'$ we have the Harnack inequality 
    	\begin{align}\label{R2h2}
    		\sup_{B_\delta(x)} (v - u) \le C \inf_{B_\delta(x)} (v - u).
    	\end{align} for some constant $C > 0$ (depending on $x, s, n, p, u, v, f, \delta$).
    As a consequence, we have the strong-comparison theorem between $u$ and $v$ in each component $\Omega'$ of $\Omega \setminus (\C_u \cap \C_v)$ i.e., either $u \equiv v$ in $\Omega'$ or $u < v$ in $\Omega'$.
    \end{lemma}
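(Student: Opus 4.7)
Set $w := v - u \ge 0$ in $\Omega$. The strategy is to show that $w$ weakly solves a linear, uniformly elliptic equation in divergence form on every ball compactly contained in $\Omega'$, and then invoke the classical De Giorgi--Nash--Moser Harnack inequality.

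Working in the upper half-space model, the PDE $-\Delta_p^{\Hn} u = f(u)$ reads $-\divg(x_n^{p-n}|\nabla u|^{p-2}\nabla u) = x_n^{-n} f(u)$. Along the path $\xi_t := (1-t)\nabla u + t\nabla v$, the fundamental theorem of calculus gives
\begin{equation*}
|\nabla v|^{p-2}\nabla v - |\nabla u|^{p-2}\nabla u = \mathcal{M}(x)\nabla w, \quad \mathcal{M}(x) := \int_0^1 \bigl(|\xi_t|^{p-2}I + (p-2)|\xi_t|^{p-4}\xi_t \otimes \xi_t\bigr)\,dt,
\end{equation*}
and subtracting the equations for $u$ and $v$ yields, weakly in $\Omega$,
\begin{equation*}
-\divg\bigl(x_n^{p-n}\mathcal{M}(x)\nabla w\bigr) = x_n^{-n} c(x)\, w,
\end{equation*}
where $c(x) := (f(v)-f(u))/(v-u)$ on $\{w > 0\}$ and $c(x) := 0$ otherwise. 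By the local Lipschitz continuity of $f$ and local boundedness of $u,v$ we have $c \in L^\infty_{\mathrm{loc}}(\Omega)$.

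The crux of the proof is to verify that $x_n^{p-n}\mathcal{M}(x)$ is uniformly elliptic with bounded coefficients on $\overline{B_{2\delta}(x)} \subset \Omega'$. Pointwise, the eigenvalues of the integrand lie in the interval $[\min\{1,p-1\}|\xi_t|^{p-2},\,\max\{1,p-1\}|\xi_t|^{p-2}]$, so those of $\mathcal{M}(x)$ are sandwiched between positive constant multiples of $\int_0^1 |\xi_t|^{p-2}\,dt$. Since $\Omega' \subset \Omega \setminus (\C_u \cap \C_v)$, at every $y \in \overline{B_{2\delta}(x)}$ one has $|\nabla u(y)| + |\nabla v(y)| > 0$; by $C^{1,\sigma}$ continuity this quantity admits a strictly positive lower bound $c_0 > 0$ on the compact set $\overline{B_{2\delta}(x)}$. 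Combined with the uniform upper bound on $|\nabla u|,|\nabla v|$ and the integrability of $|t - t_0|^{p-2}$ for $p > 1$ near any zero $t_0 \in [0,1]$ of $t \mapsto |\xi_t|$, an elementary one-dimensional analysis produces uniform two-sided bounds $0 < \lambda_0 \le \int_0^1 |\xi_t|^{p-2}\,dt \le \Lambda_0 < \infty$ on $\overline{B_{2\delta}(x)}$. Since $x_n$ is bounded above and away from zero on compact subsets of $\Hn$, uniform ellipticity with bounded measurable coefficients follows.

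Classical Moser iteration for linear uniformly elliptic divergence-form equations, carried out with the standard unweighted Sobolev and Poincaré inequalities, now delivers \eqref{R2h2}. The dichotomy in $\Omega'$ is immediate: if $w(y_0) = 0$ at some $y_0 \in \Omega'$, the Harnack conclusion forces $w \equiv 0$ on every ball around $y_0$ contained in $\Omega'$, so $\{w = 0\} \cap \Omega'$ is both open and closed in the connected set $\Omega'$, hence equals all of $\Omega'$. The main obstacle is the two-sided control of $\int_0^1 |\xi_t|^{p-2}\,dt$ in the delicate configuration where $\nabla u$ and $\nabla v$ are nearly antiparallel of comparable magnitude, but this is managed by the uniform lower bound on $|\nabla u| + |\nabla v|$ together with the integrability of the one-dimensional singular or degenerate profile.
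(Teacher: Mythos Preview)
Your proposal is correct and follows precisely the route the paper indicates: the paper does not give a detailed proof but merely states that it ``follows Moser's iteration scheme which can be continued with the standard unweighted Sobolev and Poincar\'e inequalities as long as we stay away from the critical point sets of the solutions in concern,'' referring implicitly to \cite{DS1}, \cite{DS2}. Your linearization via $\mathcal{M}(x)=\int_0^1\bigl(|\xi_t|^{p-2}I+(p-2)|\xi_t|^{p-4}\xi_t\otimes\xi_t\bigr)\,dt$ and the verification of uniform ellipticity from the two-sided bound $c_1(|\nabla u|+|\nabla v|)^{p-2}\le \int_0^1|\xi_t|^{p-2}\,dt\le c_2(|\nabla u|+|\nabla v|)^{p-2}$ on $\overline{B_{2\delta}(x)}\subset\Omega'$ is exactly the standard Damascelli--Sciunzi mechanism the paper has in mind.
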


\section{Existence and nonexistence} \label{R2exist}
In this section we will briefly look in to the question of existence and non existence of solution for the problem \eqref{R21eqn1}. In view of the symmetry result Theorem-\ref{R2hypsymmetry} it is enough to work in the subspace of $\mathcal{D}^{1,p}(\Hn)$ consisting of radial functions.\\\\
Fix a point $O\in \Hn$ and let $\mathcal{D}^{1,p}_{rad}(\Hn)$ be the closure of $C^1_c(\Hn)$ functions which are radial with respect to $O$. Let $u(x) = v(\dist_g(O,x))$ be a radial $C^1_c$ function, then for $t> 0$
$$
|v(t)| = \left|-\int_t^{\infty} v^\prime (s) \,ds \right| \le \left( \int_t^\infty|v^\prime (s) |^p  (\sinh s)^{n-1} \,ds\right)^{\frac{1}{p}} \left(\int_{t}^{\infty} \left(\sinh s\right)^ {-\frac{n-1}{p-1}}\,ds\right)^{\frac{p-1}{p}}.$$ Hence  for $R>0$ there exists a dimensional constant $C_R>0$ such that
\begin{equation}\label{R2R-bound}
|u(x)| \le C_R \|\nabla_g u\|_{L^p(\Hn \setminus B_R(O))} \; e^{-\frac{n-1}{p}\dist_g(O,x)} ,\; x \in \Hn \setminus B_R(O) 
\end{equation}
holds for all $u\in \mathcal{D}^{1,p}_{rad}(\Hn)$. As an immediate corollary we get
\begin{lemma}\label{R2radialcpt} 
	The embedding $\mathcal{D}^{1,p}_{rad}(\Hn) \hookrightarrow L^q(\Hn)$ is compact for any $p<q< p^\ast $.
\end{lemma}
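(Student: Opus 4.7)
The plan is a standard Rellich-type argument combining the radial pointwise decay bound \eqref{R2R-bound} with local compactness of Sobolev embeddings. Let $\{u_k\} \subset \mathcal{D}^{1,p}_{rad}(\Hn)$ be a bounded sequence with $\|\nabla_g u_k\|_{L^p(\Hn)} \le M$. First, I would invoke the Poincar\'e and Poincar\'e-Sobolev inequalities in $\Hn$ (together with interpolation between $L^p$ and $L^{p^\ast}$) to conclude that $\{u_k\}$ is uniformly bounded in $L^q(\Hn)$ for every $p \le q \le p^\ast$. By reflexivity of $\mathcal{D}^{1,p}(\Hn)$ I extract a subsequence $u_k \rightharpoonup u$ weakly, and since radial symmetry is preserved under weak limits, $u \in \mathcal{D}^{1,p}_{rad}(\Hn)$.

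Next, on any fixed metric ball $B_R(O)$, the restriction of $g$ is smooth and the classical Rellich-Kondrachov theorem gives the compact embedding $W^{1,p}(B_R(O)) \hookrightarrow L^q(B_R(O))$ for every $q < p^\ast$. A standard diagonal extraction in $R$ then provides a further subsequence (still denoted $u_k$) with $u_k \to u$ strongly in $L^q(B_R(O))$ for every $R > 0$.

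The heart of the proof is the uniform tail estimate provided by \eqref{R2R-bound}. By density the estimate holds for every $u \in \mathcal{D}^{1,p}_{rad}(\Hn)$; applied to $u_k$ it yields, for $x \in \Hn \setminus B_R(O)$,
\[
|u_k(x)|^q \le C_R^q M^q \, e^{-q\frac{n-1}{p} \dist_g(O,x)}.
\]
Integrating against $dV_g = (\sinh t)^{n-1}\, dt\, dV_{\mathbb{S}^{n-1}}$ gives
\[
\int_{\Hn \setminus B_R(O)} |u_k|^q \, dV_g \le C'_R M^q \int_R^\infty e^{-(n-1)(q/p - 1) t} \, dt,
\]
which tends to $0$ as $R \to \infty$ because the hypothesis $q > p$ forces the exponent to be negative. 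The same bound holds for the weak limit $u$ (since $\|\nabla_g u\|_p \le M$ by weak lower semicontinuity). Given $\varepsilon > 0$, I fix $R$ so large that both tails of $u_k$ and $u$ in $L^q(\Hn\setminus B_R(O))$ are smaller than $\varepsilon$ uniformly in $k$; the strong convergence on $B_R(O)$ then makes $\|u_k - u\|_{L^q(\Hn)}$ arbitrarily small for large $k$, proving compactness.

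There is no genuine obstacle here; the single mild point requiring care is that the uniform tail bound uses only the bound $\|\nabla_g u_k\|_{L^p(\Hn\setminus B_R)} \le M$, which does not vanish as $R\to\infty$, but the exponential decay factor $e^{-q(n-1)t/p}$ dominates the exponential volume growth $e^{(n-1)t}$ precisely when $q > p$, which is exactly the hypothesis excluding $q = p^\ast$ being the only restriction beyond local Rellich. The upper restriction $q < p^\ast$ is needed only to guarantee the local compact embedding.
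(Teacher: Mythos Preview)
Your argument is correct and is precisely the ``standard argument'' the paper alludes to (citing \cite{BS}, Theorem~3.1): weak compactness plus local Rellich on balls, with the uniform tail controlled by the radial decay bound \eqref{R2R-bound}, the key point being that $e^{-q\frac{n-1}{p}t}(\sinh t)^{n-1}$ is integrable at infinity exactly when $q>p$. One cosmetic remark: for $R\ge 1$ the constant $C_R$ in \eqref{R2R-bound} can be taken uniform in $R$ (since $\sinh s \sim \tfrac12 e^s$), so there is no hidden dependence spoiling the limit $R\to\infty$; you implicitly use this and it is harmless, but worth saying explicitly.
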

Proof follows using standard arguments (see \cite{BS}, Theorem-3.1).

Using the above Compactness lemma we have the existence in the subcritical range $p < q < p^\ast$.
\begin{theorem} Let $p<q< p^\ast$, then the problem \eqref{R21eqn1} admits a solution.

\end{theorem}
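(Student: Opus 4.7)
The plan is to apply the direct method of the calculus of variations on the radial subspace $\mathcal{D}^{1,p}_{rad}(\Hn)$, exploiting the compact embedding from Lemma~\ref{R2radialcpt}. I would consider the constrained minimization problem
$$S_{\lambda,q} := \inf\left\{\int_{\Hn} \bigl(|\nabla_g u|_g^p - \lambda |u|^p\bigr)\, dV_g \,:\, u \in \mathcal{D}^{1,p}_{rad}(\Hn),\ \int_{\Hn} |u|^q\, dV_g = 1\right\}.$$
Since $\lambda < \lambda_{\text{max}}$, the Poincar\'e inequality \eqref{R2PSineq} (applied with $q=p$) gives
$$\int_{\Hn} \bigl(|\nabla_g u|_g^p - \lambda |u|^p\bigr)\, dV_g \ge \left(1 - \frac{\lambda}{\lambda_{\text{max}}}\right)\int_{\Hn} |\nabla_g u|_g^p\, dV_g,$$
so $N(u) := \int_{\Hn} (|\nabla_g u|_g^p - \lambda |u|^p)\, dV_g$ is the $p$-th power of a norm equivalent to the $\mathcal{D}^{1,p}$-norm on $\mathcal{D}^{1,p}_{rad}(\Hn)$, and in particular $S_{\lambda,q} > 0$.

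Next, take a minimizing sequence $\{u_k\} \subset \mathcal{D}^{1,p}_{rad}(\Hn)$. By the coercivity just established the sequence is bounded in $\mathcal{D}^{1,p}(\Hn)$, so after extracting a subsequence $u_k \rightharpoonup u$ weakly in $\mathcal{D}^{1,p}(\Hn)$ with $u$ radial. Lemma~\ref{R2radialcpt} yields the strong convergence $u_k \to u$ in $L^q(\Hn)$ --- this is the crucial step where subcriticality $q < p^\ast$ is used --- so $\int_{\Hn}|u|^q\, dV_g = 1$ and in particular $u \not\equiv 0$. Because $N^{1/p}$ is a norm it is weakly lower semicontinuous, hence $N(u) \le \liminf_k N(u_k) = S_{\lambda,q}$ and the infimum is attained. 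Replacing $u$ by $|u|$ preserves the constraint and does not increase $N$, so we may assume $u \ge 0$.

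Finally, by Palais' principle of symmetric criticality --- applicable because the functional is invariant under the compact group of isometries of $\Hn$ fixing the base point $O$, whose fixed-point set in $\mathcal{D}^{1,p}(\Hn)$ is precisely $\mathcal{D}^{1,p}_{rad}(\Hn)$ --- the constrained minimizer $u$ is a critical point of the corresponding free functional on all of $\mathcal{D}^{1,p}(\Hn)$. The associated Lagrange multiplier relation reads $-\Delta_p^{\Hn} u - \lambda u^{p-1} = \mu\, u^{q-1}$ for some $\mu>0$, and the rescaling $u \mapsto \mu^{1/(q-p)} u$ produces a solution of \eqref{R21eqn1}; the $C^{1,\alpha}$-regularity noted after the statement of \eqref{R21eqn1} then applies.

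The essentially only obstacle is the non-compactness of the embedding $\mathcal{D}^{1,p}(\Hn) \hookrightarrow L^q(\Hn)$ on the full space; restricting to radial functions and invoking Lemma~\ref{R2radialcpt} bypasses it cleanly. The same scheme breaks down at the critical exponent $q = p^\ast$, where compactness fails and a concentration--compactness analysis is required --- consistent with the noted subtlety of existence at the Sobolev-critical level.
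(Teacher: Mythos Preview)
Your overall plan---minimize $N(u)=\int(|\nabla_g u|_g^p-\lambda|u|^p)\,dV_g$ over the radial constraint manifold, use the compact embedding of Lemma~\ref{R2radialcpt} to retain the constraint in the limit, then invoke symmetric criticality---is reasonable and close in spirit to the paper's. But there is a genuine gap at the weak lower semicontinuity step.

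You assert that $N^{1/p}$ is a norm equivalent to the $\mathcal{D}^{1,p}$-norm. The two-sided equivalence $c\lVert\nabla_g u\rVert_p^p\le N(u)\le\lVert\nabla_g u\rVert_p^p$ is correct, but this does \emph{not} make $N^{1/p}$ a norm: for $p\neq 2$ the functional $N$ is not convex. Indeed, take any radial $u\in\mathcal{D}^{1,p}_{rad}(\Hn)$ which is constant (say $u\equiv 1$) on a ball $B_1(O)$, and any radial $v\in C_c^\infty(B_1(O))$; for $p>2$ the second variation reduces to $D^2N(u)[v,v]=-\lambda p(p-1)\int|u|^{p-2}v^2\,dV_g<0$. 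Since a norm is convex and $t\mapsto t^p$ is increasing convex, $N^{1/p}$ being a norm would force $N$ to be convex, which it is not. (For $p=2$ your claim is fine: $N$ is a positive quadratic form.) Without convexity you cannot conclude weak lower semicontinuity of $N$ by this route. The obstruction is concrete: $u\mapsto\int|\nabla_g u|_g^p$ is weakly l.s.c., but $u\mapsto -\lambda\int|u|^p$ is only weakly \emph{upper} semicontinuous, and Lemma~\ref{R2radialcpt} gives radial compactness into $L^q$ only for $q>p$---at $q=p$ the radial decay $|u(x)|\lesssim e^{-\frac{n-1}{p}\dist(O,x)}$ exactly balances the volume growth, so no uniform tail control is available and $\int|u_k|^p$ need not converge to $\int|u|^p$.

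The paper deals with precisely this difficulty. Rather than arguing lower semicontinuity of $N$, it upgrades the minimizing sequence via Ekeland's variational principle to a Palais--Smale sequence, and then uses a Boccardo--Murat type argument to obtain $\nabla_g u_n\to\nabla_g u$ a.e.; this identifies the weak limit of $|\nabla_g u_n|_g^{p-2}\nabla_g u_n$ and lets one pass to the limit directly in the Euler--Lagrange equation. (Equivalently, a.e.\ gradient convergence yields Brezis--Lieb splitting for both terms of $N$, after which Poincar\'e applied to $u_n-u$ gives $N(u)\le\liminf N(u_n)$.) Some such device is genuinely needed when $p\neq 2$; your argument as written only goes through in the semilinear case.
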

\begin{proof} By standard arguments it is enough to show that $S_{\lambda , q}$ is achieved in \eqref{R2PSineq} by a function $u \in \mathcal{D}^{1,p}(\Hn)$. Note that by symmetrization arguments \cite[Chapter 7]{Baern}
$$ S_{\lambda , q} =\inf\limits_{u\in \mathcal{D}^{1,p}(\Hn) , u \not= 0} \frac{ \int_{\Hn} |\nabla_g u|_g^p - \lambda |u|^p \,dV_g }{ \left(\int_{\Hn} |u|^q \,dV_g\right)^{p/q}} = \inf\limits_{u\in \mathcal{D}_{rad}^{1,p}(\Hn) , u \not= 0} \frac{ \int_{\Hn} |\nabla_g u|_g^p - \lambda |u|^p \,dV_g }{ \left(\int_{\Hn} |u|^q \,dV_g\right)^{p/q}}.$$
Let, $u_n \in \mathcal{D}_{rad}^{1,p}(\Hn)$ be a minimizing sequence for the above infimum. With out loss of generality we may assume $u_n \ge 0$ and using the Ekeland variational principle, $u_n$ is a Palais-Smale sequence and $ \int_{\Hn} |\nabla_g u_n|_g^p - \lambda u_n^p \,dV_g =  \int_{\Hn} u_n^q \,dV_g$. Thus, $\int_{\Hn} |\nabla_g u_n|_g^p - \lambda u_n^p \,dV_g  \rightarrow \left(S_{\lambda , q}\right)^{\frac{q}{q-p}}$. Since $ \lambda < \left(\frac{n-1}{p}\right)^{p}$, this implies $u_n$ is bounded in $\mathcal{D}_{rad}^{1,p}(\Hn)$. Passing to a subsequence if necessary we may assume $u_n$ converges weakly to some $u\in \mathcal{D}_{rad}^{1,p}(\Hn)$.
 Passing to a further subsequence we may assume $u_n\rightarrow u$ in $L^q(\Hn)$ (using the previous Lemma-\ref{R2radialcpt}), pointwise a.e., $u_n^{p-1} \rightharpoonup u^{p-1}$ in $L^{\frac{p}{p-1}}(\Hn)$ and 
$|\nabla_g u_n|_g^{p-2} \nabla_gu_n \rightharpoonup V $ for some vector field $V \in L^{\frac{p}{p-1}}$. Since $u_n$ is a Palais-Smale sequence we have for for all 
$ \phi \in C_c^1(\Hn)$,
$$  \int_{\Hn} \left[|\nabla_g u_n|_g^{p-2}g(\nabla_gu_n,\nabla_g\phi) - \lambda u_n^{p-1} \phi \right] \,dV_g  = \int_{\Hn} u_n^{q-1} \phi \,dV_g + o(1), \; {\text as} \; n\rightarrow \infty.$$ 
Passing to the limit as $n\rightarrow \infty$ we see that 
$$\int_{\Hn} \left[g(V,\nabla_g\phi) - \lambda u^{p-1}\phi \right] \,dV_g  = \int_{\Hn} u^{q-1} \phi \,dV_g. $$ It follows from \cite{BM} that $V= |\nabla_g u|_g^{p-2}\nabla_gu$ and hence $u$ solves $-\Delta_p^{\Hn} u - \lambda u^{p-1} = u^{q - 1} \text{ in } \Hn$ and in particular $ \int_{\Hn} |\nabla_g u|_g^p - \lambda u^p \,dV_g =  \int_{\Hn} u^q \,dV_g$. Moreover $ \int_{\Hn} |u|^q \,dV_g = \lim\limits_{n\rightarrow \infty}\int_{\Hn} |u_n|^q \,dV_g = \left( S_{\lambda , q} \right)^{\frac{q}{q-p}} .$ Thus $u$ is a non-negative extremal.
\end{proof}
\,
\\ Next we consider the critical case $q= p^\ast$. The crucial step is to obtain higher integrability, see for example \cite{HD} for similar issues in the Euclidean context.
\begin{theorem}  Let $S_p$ be the best constant in the Euclidean Sobolev inequality defined by
$S_p = \inf\left\{ \int_{\R^n} |\nabla u|^p dx :  \int_{\R^n} |u|^{p^\ast} dx=1, u \in \mathcal{D}^{1,p}(\Rn)\right\}$. Then the problem \eqref{R21eqn1} admits a nontrivial solution if  $ S_{\lambda,p^\ast } < S_p$.
\end{theorem}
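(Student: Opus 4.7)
The plan is to run the classical Brezis--Nirenberg concentration-compactness scheme in the radial subspace, with the strict inequality $S_{\lambda,p^\ast}<S_p$ serving to rule out the loss of compactness at the critical exponent. By Schwarz symmetrization in $\Hn$, the infimum defining $S_{\lambda,p^\ast}$ is attained on radial functions, so I start with a non-negative radial minimizing sequence $\{u_n\}\subset \mathcal{D}^{1,p}_{rad}(\Hn)$ normalized by $\lVert u_n\rVert_{L^{p^\ast}(\Hn)}=1$ and satisfying $\int_{\Hn}(|\nabla_g u_n|_g^p-\lambda u_n^p)\,dV_g\to S_{\lambda,p^\ast}$. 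Since $\lambda<\lambda_{\text{max}}$, the Poincar\'e inequality gives a uniform bound in $\mathcal{D}^{1,p}(\Hn)$; after extraction, $u_n\rightharpoonup u$ weakly, a.e., and strongly in $L^q(\Hn)$ for every $p<q<p^\ast$ by Lemma~\ref{R2radialcpt}. An Ekeland variational argument, together with the convergence argument for $|\nabla_g u_n|_g^{p-2}\nabla_g u_n$ used in the previous theorem, shows $u$ solves \eqref{R21eqn1} weakly with $q=p^\ast$.

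The critical task is to prove $u\not\equiv 0$. Setting $v_n=u_n-u$ and applying the Brezis--Lieb lemma to both $|u_n|^{p^\ast}$ and $|\nabla_g u_n|_g^p$ (justified by a.e.\ convergence and boundedness), I would obtain, with $a=\lVert u\rVert_{L^{p^\ast}}^{p^\ast}$, $b=\lim_n\lVert v_n\rVert_{L^{p^\ast}}^{p^\ast}$, the splitting $a+b=1$ and
\begin{align*}
S_{\lambda,p^\ast} \;=\; \mathcal{E}(u) + \lim_{n\to\infty} \mathcal{E}(v_n), \qquad \mathcal{E}(w):=\int_{\Hn}\bigl(|\nabla_g w|_g^p-\lambda w^p\bigr)\,dV_g.
\end{align*}
The first bracket satisfies $\mathcal{E}(u)\ge S_{\lambda,p^\ast}\,a^{p/p^\ast}$ by definition. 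The key residue bound is $\liminf_n \mathcal{E}(v_n)\ge S_p\,b^{p/p^\ast}$, which uses two ingredients: (i) the Euclidean Sobolev inequality $\int_{\Hn}|\nabla_g w|_g^p\,dV_g\ge S_p\lVert w\rVert_{L^{p^\ast}(\Hn)}^p$ holds on $\Hn$ with the same sharp Euclidean constant (via the locally Euclidean structure and a cut-off/rescaling argument), and (ii) on any concentration profile of $v_n\rightharpoonup 0$ the low-order term $\lambda\int v_n^p\,dV_g$ is negligible because Aubin--Talenti-type bubbles at scale $\varepsilon$ have $L^p$-mass of order $\varepsilon^p\to 0$. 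Combining gives
\begin{align*}
S_{\lambda,p^\ast}\;\ge\; S_{\lambda,p^\ast}\,a^{p/p^\ast}+S_p\,b^{p/p^\ast},\qquad a+b=1.
\end{align*}
Since $t\mapsto t^{p/p^\ast}$ is strictly concave on $[0,1]$ and $S_{\lambda,p^\ast}<S_p$, the only admissible pair is $a=1$, $b=0$, forcing $u_n\to u$ strongly in $L^{p^\ast}(\Hn)$ and $u\not\equiv 0$.

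Weak lower semicontinuity of $\mathcal{E}$ then yields $\mathcal{E}(u)\le S_{\lambda,p^\ast}$, so $u$ is a non-negative extremal and satisfies \eqref{R21eqn1} with $q=p^\ast$ after absorbing the positive Lagrange multiplier by homogeneity. The principal obstacle is the residue estimate for $v_n$: unlike Brezis--Nirenberg on bounded Euclidean domains, where Rellich compactness handles the $L^p$ term for free, here $\mathcal{D}^{1,p}(\Hn)\hookrightarrow L^p(\Hn)$ fails to be compact even on the radial subspace -- the decay $|u(x)|\lesssim e^{-(n-1)\dist_g(O,x)/p}$ provided by \eqref{R2R-bound} is exactly borderline for integrability against $(\sinh t)^{n-1}$ -- so one must locate the $L^{p^\ast}$-mass of $v_n$ directly via a Lions-type dichotomy between interior Euclidean blow-up at a point of $\Hn$ and escape to the ideal boundary $\partial_\infty\Hn$, handling each case by rescaling back to the Euclidean Sobolev inequality.
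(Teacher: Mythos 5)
Your route is genuinely different from the paper's. You run the classical Brezis--Nirenberg/concentration-compactness scheme directly at the critical exponent: Brezis--Lieb splitting of a minimizing sequence, the energy inequality $S_{\lambda,p^\ast}\ge S_{\lambda,p^\ast}a^{p/p^\ast}+S_p\,b^{p/p^\ast}$ with $a+b=1$, and strict concavity plus $S_{\lambda,p^\ast}<S_p$ forcing $b=0$. The paper instead approximates by subcritical exponents $q_n\uparrow p^\ast$, takes the radial solutions $u_n$ already produced by the subcritical theorem with $\int u_n^{q_n}=1$, and proves $u\not\equiv 0$ by showing $\int u_n^{q_n}\to\int u^{p^\ast}$: the tail $\Hn\setminus B_R(O)$ is handled by the radial decay \eqref{R2R-bound} and dominated convergence, and on $B_R(O)$ a uniform $L^q$ bound for some $q>p^\ast$ is obtained by testing with $\phi^pu_n^k$ and absorbing the critical term via \eqref{R2Hyp-Sob} --- this absorption, with coefficient $\Lambda/S_p<1$, is precisely where $S_{\lambda,p^\ast}<S_p$ enters. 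The paper's argument thus avoids profile decompositions and Brezis--Lieb entirely; your argument avoids the subcritical approximation and the Moser-type iteration. Both are legitimate, and your energy-splitting argument, once completed, is arguably the more standard and transparent use of the strict inequality.

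That said, two steps in your plan need real work and are currently only gestured at. First, the residue bound $\liminf_n\mathcal{E}(v_n)\ge S_p\,b^{p/p^\ast}$: your justification that ``bubbles have $L^p$-mass of order $\varepsilon^p$'' presupposes the profile decomposition you are trying to establish. The correct repair, which your last paragraph points toward, is to use \eqref{R2R-bound} to show $\int_{\Hn\setminus B_R}|v_n|^{p^\ast}\to 0$ (the exponent $\tfrac{n-1}{p}p^\ast>n-1$ makes the dominating function integrable), so all the $L^{p^\ast}$-mass $b$ sits in $B_R$ where local Rellich compactness kills $\int_{B_R}v_n^p$; the exterior piece carries no $L^{p^\ast}$-mass and has nonnegative energy by the Poincar\'e inequality since $\lambda<\lambda_{\text{max}}$. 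Without this split, the term $-\lambda\int v_n^p$ is not negligible, because $\mathcal{D}^{1,p}_{rad}(\Hn)\hookrightarrow L^p(\Hn)$ is not compact and $L^p$-mass can escape radially to infinity. Second, the Brezis--Lieb identity for $\int|\nabla_g u_n|_g^p$ with $p\neq 2$ requires a.e.\ convergence of the gradients, which does not follow from weak convergence of a minimizing sequence alone; you need the Palais--Smale structure from Ekeland together with the Boccardo--Murat-type argument the paper invokes for the subcritical case. Neither issue is fatal, but as written the key inequality is asserted rather than proved.
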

\begin{proof} We have the following Sobolev inequality  (see \cite{Aubin}, Theorem-2.28)
\begin{equation}\label{R2Hyp-Sob}
\int_{\Hn} |\nabla_g u|_g^p \ge S_{p} \left(\int_{\Hn} |u|^{p^\ast} \,dV_g\right)^{p/p^\ast}, \forall \, u \in \mathcal{D}^{1,p}(\Hn).
\end{equation}
Hence we have $S_{\lambda,p^\ast } \le S_p$. Choose $q_n \in (p, p^\ast)$ such that $ q_n \rightarrow p^\ast$ as $n\rightarrow \infty$. It follows from the previous theorem and our symmetry result that there exists a $u_n \in \mathcal{D}^{1,p}_{rad}(\Hn)$ satisfying 
$$\int_{\Hn} u_n^{q_n} \,dV_g = 1, \;  \int_{\Hn} \left[ |\nabla_g u_n|_g^p - \lambda u_n^p\right] \,dV_g =  S_{\lambda, q_n}$$
and $ -\Delta_p^{\Hn} u_n - \lambda u_n^{p-1} = S_{\lambda, q_n} u_n^{q - 1} \text{ in } \Hn$. \\\\
It is easy to see that $\limsup\limits_{n\rightarrow \infty} S_{\lambda, q_n} \le S_{\lambda, p^\ast}$, thus taking a subsequence if necessary we will assume that $\lim\limits_{n\rightarrow \infty} S_{\lambda, q_n} := \Lambda_0 \le S_{\lambda, p^\ast}$. This in particular implies that $u_n$ is bounded in $ \mathcal{D}^{1,p}_{rad}(\Hn)$. Therefore as in the proof of previous theorem we will be able to extract a subsequence, which we still denote by $u_n$, and a $u \in \mathcal{D}^{1,p}_{rad}(\Hn)$ such that 
$u_n\rightarrow u$ pointwise a.e., $u_n \rightharpoonup u$ in $\mathcal{D}^{1,p}_{rad}(\Hn)$ and $u$ solves $ -\Delta_p^{\Hn} u - \lambda u^{p-2} = \Lambda_0 u^{q - 1}  \text{ in } \Hn$. It remains to show that $u \not= 0$ and hence a constant multiple of $u$ solves \eqref{R21eqn1}.\\\\
Using the pointwise bound \eqref{R2R-bound} and dominated convergence theorem we can easily see that  $$\int_{\Hn \setminus B_R(O)} u_n^{q_n} \,dV_g \rightarrow \int_{\Hn \setminus B_R(O)} u^{p^\ast} \,dV_g$$ as $n\rightarrow \infty$ for any $R>0.$ Thus, $\int_{\Hn} u_n^{q_n} \,dV_g \rightarrow \int_{\Hn} u^{p^\ast} \,dV_g$ as $n\rightarrow \infty$ if we have for some $q> p^\ast$ and a $R>0$ such that $\int_{B_R(O)} u_n^{q} \,dV_g \; \le C , \forall \, n$. We will show that this is indeed true if we assume $S_{\lambda,p^\ast } <  S_p$.\\
Let us assume $S_{\lambda,p^\ast } <  S_p$ . Choose  $ \Lambda \in (\Lambda_0 ,S_p)$. Fix $R>0$ and  a radial function $\phi \in C_c^1(B_R(O))$ such that $\phi \equiv 1$ in $B_{R/2}(O)$. Let $k>1$ to be chosen later. Testing the PDE for $u_n$ with $\phi^pu_n^k $, we get
$$  \int_{\Hn} \left[|\nabla_g u_n|_g^{p-2}g(\nabla_g u_n,\nabla_g (\phi^p u_n^k)) - \lambda u_n^{p-1+k}\phi^p\right] \,dV_g = S_{\lambda, q_n} \int_{\Hn} u_n^{q_n-1+k}\phi^p \,dV_g.$$
Since $u_n$ is bounded in $\mathcal{D}^{1,p}(\Hn)$, the second term on the LHS is uniformly bounded if $p-1+k < p^\ast$.
Expanding the first term on LHS, we get
$$  k\int_{\Hn} |\nabla_g u_n|_g^{p}\phi^pu_n^{k-1}\,dV_g + \int_{\Hn} |\nabla_g u_n|_g^{p-2} g(\nabla_gu_n,\nabla_g\phi) (p\phi^{p-1} u_n^k)\,dV_g.$$
Since the integrand of the second term is supported in the annulus with radius $R/2$ and $R$, the radial bound \eqref{R2R-bound} implies that $u_n$ is uniformly bounded in this annulus. Thus the second term is bounded by $C\int_{B_R(O)}|\nabla_g u_n|_g^{p-1}\,dV_g$ which is uniformly bounded.  Now using Young's inequality we see that 
\begin{align*} 
	&\frac{kp^p}{(k+p-1)^p} \int_{\Hn} \left|\nabla_g \left(\phi u_n^{\frac{k+p-1}{p}}\right)\right|_g^{p} \,dV_g \\ &\le  k\int_{\Hn} |\nabla_g u_n|_g^{p}\phi^pu_n^{k-1}\,dV_g + C\int_{\Hn}\left(
 |\nabla_g \phi|_g^pu_n^{p+ k-1} + |\nabla_g u_n|_g^{p-1} u_n^k |\nabla_g \phi|_g \right) \,dV_g.\end{align*}
 The last integral is uniformly bounded as argued before if $k-1+p \le p^\ast$, thus substituting back these informations we get the existence of a constant $C>0$ such that
 $$\frac{kp^p}{(k+p-1)^p} \int_{\Hn} \left|\nabla_g \left(\phi u_n^{\frac{k+p-1}{p}}\right)\right|_g^{p} \,dV_g \le C + \Lambda \int_{\Hn} u_n^{q_n-1+k}\phi ^p\,dV_g.$$
 Now using \eqref{R2Hyp-Sob} the last term can be estimated as follows,
  \begin{align*} 
  	\Lambda \int_{\Hn} u_n^{q_n-1+k}\phi^p \,dV_g &= \Lambda \int_{\Hn} \left(u_n^{\frac{p-1+k}{p}}\phi\right)^p(u_n^{q_n-p}) \,dV_g \\ 
  	&\le \Lambda \left(\int_{\Hn} \left(u_n^{\frac{p-1+k}{p}}\phi\right)^{q_n}\,dV_g\right)^{\frac{p}{q_n}} \left(\int_{\Hn} u_n^{q_n} \,dV_g\right)^{\frac{q_n-p}{q_n}} \\
    &\le |V_g(B(O,R))|^{\frac{p^\ast-q_n}{p^\ast}} \Lambda \left(\int_{\Hn} \left(u_n^{\frac{p-1+k}{p}}\phi \right)^{p^\ast} \,dV_g\right)^{\frac{p}{p^\ast}} \\
    &\le  |V_g(B(O,R))|^{\frac{p^\ast-q_n}{p^\ast}} \frac{\Lambda}{S_p} \int_{\Hn} \left|\nabla_g \left(\phi u_n^{\frac{k+p-1}{p}}\right)\right|_g^{p} \,dV_g \\
    &\le (1-\alpha)\int_{\Hn} \left|\nabla_g \left(\phi u_n^{\frac{k+p-1}{p}}\right)\right|_g^{p} \,dV_g \end{align*}  
  as $q_n \to p^\ast$ for all $n$ large and some $\alpha \in (0,1)$. Choosing $k$ close to 1 such that $\frac{kp^p}{(k+p-1)^p} > 1-\frac{\alpha}{2}$  we get $ \int_{\Hn} \left|\nabla_g \left(\phi u_n^{\frac{k+p-1}{p}}\right)\right|_g^{p} \,dV_g \le C$ for all $n$ large and hence the higher integrability follows by Sobolev inequality \eqref{R2Hyp-Sob}. This completes the proof.
  \end{proof}
When $p=2$ the precise existence and nonexistence result was proved in \cite{MS}. When $p\not= 2$ we have the following existence theorem in the critical case.
 \begin{theorem} Let $1 < p < n$ such that $n > p^2$. When $ p < 2 $, $n > 3p-2$, $\lambda > 0$ and $q = p^\ast$ then the problem \eqref{R21eqn1} has a non zero solution.
 \end{theorem}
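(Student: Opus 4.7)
By the preceding theorem, it suffices to establish the strict inequality $S_{\lambda,p^\ast}<S_p$. The plan is to carry out a Brezis--Nirenberg type argument, testing the Rayleigh quotient on truncated Euclidean Sobolev extremals placed at a point of $\Hn$.

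Fix an origin $O\in \Hn$ and work in geodesic normal coordinates $y\in\Rn$ at $O$; the Hyperbolic metric then satisfies $g_{ij}(y)=\delta_{ij}+O(|y|^2)$ and $\sqrt{\det g(y)}=1+O(|y|^2)$. Let $U(y)=c_n(1+|y|^{p/(p-1)})^{-(n-p)/p}$ be the Aubin--Talenti extremal of the Euclidean $p$-Sobolev inequality, normalized so that $\int_{\Rn}|\nabla U|^p\,dy=\int_{\Rn}U^{p^\ast}\,dy=S_p^{n/p}$, and set $U_\epsilon(y)=\epsilon^{-(n-p)/p}U(y/\epsilon)$. Choose a smooth radial cutoff $\phi$ with compact support in a Euclidean ball $B_\rho(0)$ (with $\rho$ smaller than the injectivity radius), $\phi\equiv 1$ on $B_{\rho/2}$, and define the test function $v_\epsilon(x):=\phi(y)\,U_\epsilon(y)$, where $y$ is the normal coordinate of $x$. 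Then $v_\epsilon\in\mathcal{D}_{rad}^{1,p}(\Hn)$ with compact support.

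The core of the argument is the asymptotic expansion, as $\epsilon\to 0^+$, of the three integrals
\begin{align*}
N_1(\epsilon) &= \int_{\Hn}|\nabla_g v_\epsilon|_g^p\,dV_g, \\
N_2(\epsilon) &= \int_{\Hn} v_\epsilon^p\,dV_g, \\
D(\epsilon) &= \int_{\Hn} v_\epsilon^{p^\ast}\,dV_g.
\end{align*}
Expanding the metric in normal coordinates, rescaling, and exploiting the decay $U(z)\sim |z|^{-(n-p)/(p-1)}$, one obtains $N_1(\epsilon)=S_p^{n/p}+O(\epsilon^{\alpha_1})$ and $D(\epsilon)=S_p^{n/p}+O(\epsilon^{\alpha_2})$ for explicit exponents $\alpha_1,\alpha_2>0$. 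The assumption $n>p^2$ guarantees $U\in L^p(\Rn)$, so a direct change of variables yields $N_2(\epsilon)=C_0\,\epsilon^p+o(\epsilon^p)$ with $C_0=\int_{\Rn}U^p\,dy>0$. Using $(1+t)^{p/p^\ast}=1+(p/p^\ast)t+O(t^2)$ and combining,
$$\dfrac{N_1(\epsilon)-\lambda N_2(\epsilon)}{D(\epsilon)^{p/p^\ast}} \;=\; S_p - \lambda C_0 \epsilon^p + O(\epsilon^{\min(\alpha_1,\alpha_2)}) + o(\epsilon^p),$$
so (using $\lambda>0$) the strict inequality $S_{\lambda,p^\ast}<S_p$ follows for $\epsilon$ small, provided $\min(\alpha_1,\alpha_2)>p$.

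The hard part is verifying $\min(\alpha_1,\alpha_2)>p$ exactly under the stated hypothesis. The truncation error contributes at order $\epsilon^{(n-p)/(p-1)}$, which exceeds $\epsilon^p$ precisely when $n>p^2$; this is the standard Brezis--Nirenberg threshold. The genuinely new ingredient, absent in the Euclidean analysis, is the Hyperbolic metric correction, whose leading contribution to $N_1$ is of order $\epsilon^2\int|\nabla U(z)|^p|z|^2\,dz$. The tail bound $|\nabla U(z)|^p\sim |z|^{-p(n-1)/(p-1)}$ shows that this second-moment integral converges exactly when $n>3p-2$, accounting for the stated hypothesis, and produces a correction of genuine order $\epsilon^2$; since $p<2$ here, this is strictly smaller than $\epsilon^p$, as required. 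A further subtlety for $p<2$ is the handling of $|\nabla v_\epsilon|_g^p$ itself: the Taylor expansion $|A+B|^p=|A|^p+p|A|^{p-2}A\cdot B+O(|A|^{p-2}|B|^2)$ is unavailable since the prefactor $|A|^{p-2}$ blows up where $A$ is small, and one must instead use $\bigl||A+B|^p-|A|^p-p|A|^{p-2}A\cdot B\bigr|\le C\min(|A|^{p-2}|B|^2,|B|^p)$, splitting the integration according to the relative size of $|A|$ and $|B|$, both on the truncation annulus and when expanding against the metric perturbation.
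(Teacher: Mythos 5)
Your proposal is correct and is essentially the paper's own argument: the paper reduces the theorem to the strict inequality $S_{\lambda,p^\ast}<S_p$ via the preceding existence result and then simply cites the test-function computation in Druet (Theorem 1.5) for that inequality, which is exactly the Brezis--Nirenberg expansion you carry out. Your bookkeeping of where the hypotheses enter --- $n>p^2$ for $U\in L^p$ and for the truncation error $\epsilon^{(n-p)/(p-1)}=o(\epsilon^p)$, $n>3p-2$ for convergence of $\int|\nabla U|^p|z|^2\,dz$, and $p<2$ so that the curvature correction $\epsilon^2$ is beaten by the gain $\lambda C_0\epsilon^p$ --- matches the cited source.
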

 Using standard test functions we can see that in this case $S_{\lambda , p^\ast} < S_p$, see the proof of Theorem-1.5 in \cite{Druet} for details.\\ 
  \,
  \\ We end the section with a Pohozhaev non-existence result for \eqref{R21eqn1} when $q = p^\ast$ and $\lambda = 0$.
\begin{theorem} Let $\lambda =0$ and $q= p^\ast$ then the problem \eqref{R21eqn1} does not have any non zero solution.
\end{theorem}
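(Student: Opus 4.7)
The plan is to reduce to a radial ODE via Theorem~\ref{R2hypsymmetry} and then run a Pohozhaev-style argument with a second, cleverly chosen multiplier, ending in a sign contradiction. Since $\lambda=0$, the algebraic equation $\alpha^{p-1}(n-1-(p-1)\alpha)=0$ forces $\alpha_\lambda=\frac{n-1}{p-1}$, and by Theorem~\ref{R2hypsymmetry} any solution is of the form $u(x)=\Phi(r)$, $r=\dist_{\Hn}(O,x)$, with $\Phi>0$, $\Phi'(0)=0$, $\Phi'(r)<0$ for $r>0$, and the sharp bounds $\Phi(r),|\Phi'(r)|\asymp e^{-\alpha_\lambda r}$. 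In geodesic polar coordinates the PDE collapses to
\begin{align*}
-\bigl((\sinh r)^{n-1}|\Phi'|^{p-2}\Phi'\bigr)' = (\sinh r)^{n-1}\Phi^{p^\ast-1}, \qquad r>0.
\end{align*}

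The first key identity is the radial hyperbolic Pohozhaev identity, which I would derive by multiplying the ODE by $\sinh r\cdot\Phi'$ and integrating over $(0,\infty)$. Two integrations by parts, with boundary terms killed at $r=0$ by $\sinh 0=\Phi'(0)=0$ and at $r=\infty$ by the decay estimate (since $p\alpha_\lambda=\frac{p(n-1)}{p-1}>n-1$, all exponentially weighted boundary quantities vanish), collapse both sides down to the coefficient $\frac{p-n}{p}\ne 0$, yielding
\begin{align*}
\int_0^\infty (\sinh r)^{n-1}\cosh r\,|\Phi'|^p\,dr = \int_0^\infty (\sinh r)^{n-1}\cosh r\,\Phi^{p^\ast}\,dr.
\end{align*}

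Then I would run the same bookkeeping with the \emph{new} multiplier $\cosh r\cdot \Phi$: a single integration by parts (boundary terms again vanishing by the same decay estimates) produces
\begin{align*}
\int_0^\infty \Phi\,|\Phi'|^{p-2}\Phi'\,(\sinh r)^n\,dr + \int_0^\infty (\sinh r)^{n-1}\cosh r\,|\Phi'|^p\,dr = \int_0^\infty (\sinh r)^{n-1}\cosh r\,\Phi^{p^\ast}\,dr.
\end{align*}
Subtracting the Pohozhaev identity cancels the last two integrals and leaves the remarkable relation
\begin{align*}
\int_0^\infty \Phi\,|\Phi'|^{p-2}\Phi'\,(\sinh r)^n\,dr = 0.
\end{align*}

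The contradiction is now immediate: since $\Phi>0$ on $[0,\infty)$ and $\Phi'<0$ on $(0,\infty)$, the integrand equals $-\Phi(\sinh r)^n|\Phi'|^{p-1}$, which is strictly negative almost everywhere on $(0,\infty)$, forcing a strictly negative integral unless $\Phi\equiv 0$. Hence no non-trivial solution exists. The only technically delicate point, and the place where the hypothesis $\lambda=0$ is genuinely used, is the boundary-term analysis: the identification $\alpha_\lambda=\frac{n-1}{p-1}$ is exactly what makes both quantities $(\sinh r)^n|\Phi'|^p$ and $\cosh r\cdot\Phi\cdot(\sinh r)^{n-1}|\Phi'|^{p-2}\Phi'$ decay (the latter like $e^{-(n-p)r/(p-1)}$), so all integrations by parts are legitimate — this is the main obstacle and it is resolved directly from the sharp asymptotics of Theorem~\ref{R2hypsymmetry}.
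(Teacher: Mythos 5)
Your proposal is correct and follows essentially the same route as the paper: after reducing to the radial ODE via Theorem~\ref{R2hypsymmetry}, the paper uses exactly the two multipliers $(\sinh t)\,u'$ and $(\cosh t)\,u$, combines the resulting identities, and kills the boundary terms with the asymptotics $u\sim u'\sim e^{-\frac{n-1}{p-1}t}$ to conclude $\int_0^\infty |u'|^{p-2}u'\,u\,(\sinh t)^n\,dt=0$, the same sign contradiction you reach. The only cosmetic difference is that you pass to the limit $R\to\infty$ in each identity separately before subtracting, whereas the paper subtracts at finite $R$ first; both are justified by the decay estimates.
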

\begin{proof} Assume the problem has a nontrivial solution. Then by our symmetry result Theorem-\ref{R2hypsymmetry} the initial value problem
\begin{align}
\begin{cases}\label{R2pode}
\left( (\sinh t)^{n-1}|u^\prime|^{p-2} u^\prime\right)^\prime + (\sinh t)^{n-1} u^{p^\ast-1} = 0 \\
u(0)=\alpha , u^\prime(0)=0 , u(t)>0,  u^\prime(t)< 0 , \forall \, t > 0
\end{cases} 
\end{align}  
has a solution for some $\alpha >0$ satisfying the asymptotic estimates stated in Theorem-\ref{R2hypsymmetry} namely $u(t) \sim u^{\prime}(t) \sim e^{-\frac{n-1}{p-1}t}$ for $t$ large.\\
Multiplying the equation \eqref{R2pode} with $(\sinh t)u^\prime (t)$ and integrating from $0$ to some $R>0$, we get
\begin{align} 0 &= \int_0^R \left[ \left( (\sinh t)^{n-1}|u^\prime|^{p-2} u^\prime\right)^\prime + u^{p^\ast-1} (\sinh t)^{n-1}\right](\sinh t)u^\prime (t) \; dt \nn \\
& = - \int_0^R  \left( (\sinh t)^{n-1}|u^\prime|^{p-2} u^\prime\right)((\sinh t)u^\prime (t))^\prime\,dt \nn \\ 
& \quad + \frac{1}{p^\ast} \int_0^R
\left(u^{p^\ast}\right)^\prime(\sinh t)^n\; dt + (\sinh R)^n|u^\prime(R)|^p.
\end{align}
Again integrating by parts and rearranging the terms, we get
\begin{equation}\label{R2Poh-1}
\frac{p-n}{p}\int_0^R\left( |u^\prime|^p - u^{p^\ast}\right)(\sinh t)^{n-1}\cosh t \; dt = \left[\frac{p-1}{p} |u^\prime (R)|^p + \frac{1}{p^\ast}u^{p^\ast} (R)\right] (\sinh R)^n 
\end{equation}
Similarly multiplying the equation \eqref{R2pode} by $(\cosh t) u(t)$ and simplifying as before we get
\begin{align}\label{R2Poh-2}
\int_0^R \left( |u(t)|^{p^\ast} - |u^\prime (t)|^p\right) (\sinh t)^{n-1}\cosh t \; dt &= \int_0^R |u^\prime (t)|^{p-2}  u^\prime (t) u(t)(\sinh t)^{n} \; dt \nonumber \\ & \quad + |u^\prime (R)|^{p-1} u(R)  (\sinh R)^{n-1}\cosh R.
\end{align}

Substituting \eqref{R2Poh-2} in \eqref{R2Poh-1} and simplifying using the asymptotic estimates of $u$ and $u'$ we get,
$$ \frac{n-p}{p}\int_0^R|u^\prime (t)|^{p-2} u(t) u^\prime(t) (\sinh t)^{n} \; dt = O\left(e^{\left( n- \frac{n-1}{p-1} p\right) R } \right) + O\left(e^{\left( n- \frac{n-1}{p-1} p^\ast\right) R} \right) \to 0$$ as $R\rightarrow \infty$.
Thus we get $\int_0^\infty |u^\prime (t)|^{p-1} u(t)(\sinh t)^{n} \; dt= 0$, which is a contradiction.
\end{proof}

\,
\\ \textit{Acknowledgments.} This work was done as a part of the PhD thesis of the first author. The work was supported by the Department of Atomic Energy, project ``Mathematics, Theoretical Sciences and Science Educations" Project Identification Code RTI4001.

	\bibliographystyle{plain}
\bibliography{RamyaSan.bib}

\end{document}